\newcommand{\relMOM}{g}
\renewcommand{\dim}{3}
\newcommand{\threed}{{\mathbb R}^\dim}
\newcommand{\R}{{\mathbb R}}
\newcommand{\sph}{{\mathbb S}^{2}}
\newcommand{\eqdef}{\overset{\mbox{\tiny{def}}}{=}}
\newcommand{\ang}[1]{ \left< {#1} \right> }
\newcommand{\nullSpace}{N(L)}
\newcommand{\pa}{\partial}
\newcommand{\be}{\beta}
\newcommand{\ag}{\gamma}
\newcommand{\pZ}{p^{0}}
\newcommand{\qZ}{q^{0}}
\newcommand{\pZprime}{p^{\prime 0}}
\newcommand{\qZprime}{q^{\prime 0}}
\newcommand{\macroCOE}{\lambda}
\newcommand{\rP}{p^{\prime}}
\newcommand{\rQ}{q^{\prime}}
\newcommand{\PrP}{p^{\prime\prime}}
\newcommand{\QrQ}{q^{\prime\prime}}
\newcommand{\CD}{\mathcal{D}}
\newcommand{\CE}{\mathcal{E}}
\newtheorem{theorem}{Theorem}
\newtheorem{proposition}{Proposition}
\newtheorem{lemma}[proposition]{Lemma}
\theoremstyle{definition}
\numberwithin{equation}{section}
\begin{document}
\title[Relativistic Vlasov-Maxwell-Boltzmann System]{Momentum Regularity and
Stability of the Relativistic Vlasov-Maxwell-Boltzmann System}
\author[Y. Guo]{Yan Guo}
\address{(Y.G.) Division of Applied Mathematics, Brown University, Providence,
RI 02912}
\email{guoy at dam.brown.edu}
\thanks{Y.G. was partially supported by the NSF grant DMS-0905255, a NSF FRG
grant, and the Chinese NSF grant \# 10828103.}
\author[R. M. Strain]{Robert M. Strain}
\address{(R.M.S.) University of Pennsylvania, Department of Mathematics, David
Rittenhouse Lab, 209 South 33rd Street, Philadelphia, PA 19104}
\email{strain at math.upenn.edu}
\urladdr{http://www.math.upenn.edu/~strain/}
\thanks{R.M.S. was partially supported by the NSF grant DMS-0901463, and an Alfred P. Sloan Foundation Research Fellowship.}

%\keywords{Relativity, Boltzmann, relativistic Maxwellian, stability,
%Newtonian Limit, collisional Kinetic Theory, Kinetic Theory}
%\subjclass[2000]{Primary: 76P05; Secondary: 83A05}

\begin{abstract}
In the study of solutions to the relativistic Boltzmann equation, their
regularity with respect to the momentum variables has been an outstanding
question, even local in time, due to the initially unexpected growth in the
post-collisional momentum variables which was discovered in 1991 by Glassey
\& Strauss \cite{MR1105532}. We establish momentum regularity within energy
spaces via a new splitting technique and interplay between the
Glassey-Strauss frame and the center of mass frame of the relativistic collision
operator. In a periodic box, these new momentum regularity estimates lead to
a proof of global existence of classical solutions to the two-species
relativistic Vlasov-Maxwell-Boltzmann system for charged particles near
Maxwellian with hard ball interaction.
\end{abstract}

% set the depth for the table of contents (0-2)
\setcounter{tocdepth}{1}
\maketitle
\tableofcontents

\thispagestyle{empty}

\section{Introduction and formulation}

In 2003 it was shown for the first time that the full (Newtonian) Vlasov-Maxwell-Boltzmann system 
\cite{MR2000470} has global in time unique classical solutions on the torus for initial conditions which are sufficiently close to the steady state.  Then in 2006 the rapid convergence \cite{MR2209761} on the torus for these solutions, as predicted by the H-theorem, was established.  However it should be pointed out that this model is physically limited because it is not fully Lorentz invariant in the sense that the symmetries of the Maxwell system are inconsistent with those of the Newtonian Boltzmann equation and serious difficulties are encountered in extending the method from \cite{MR2000470} to the relativistic fully Lorentz invariant regime.  In this work, as explained in the following developments, we overcome these difficulties and establish the momentum regularity as well as the existence of global in time classical solutions for the fully Lorentz invariant relativistic Vlasov-Maxwell-Boltzmann system with hard-ball interaction.

We study the following two-species relativistic Vlasov-Maxwell-Boltzmann system which 
describes the time evolution of charged particles: 
\begin{equation}
\begin{split}
\partial _{t}F_{+}+c\frac{p}{p_{+}^{0}}\cdot \nabla _{x}F_{+}
+
e_{+}\left( E+\frac{p}{p_{+}^{0}}\times B\right) \cdot \nabla _{p}F_{+}& 
=
\mathcal{Q}^{+}(F)+\mathcal{Q}^{\pm }(F), 
\\
\partial _{t}F_{-}+c\frac{p}{p_{-}^{0}}\cdot \nabla _{x}F_{-}-e_{-}
\left( E+\frac{p}{p_{-}^{0}}\times B\right) \cdot \nabla _{p}F_{-}& 
=
\mathcal{Q}^{-}(F)+\mathcal{Q}^{\mp }(F).
\end{split}
\label{rVMB}
\end{equation}%
The collision operators, defined in \eqref{rCOL} below, are given by $\mathcal{Q}^{+}(F)\eqdef \mathcal{Q}%
(F_{+},F_{+}),$ $\mathcal{Q}^{-}(F)\eqdef \mathcal{Q}%
(F_{-},F_{-}),$ $\mathcal{Q}^{\pm }(F)\eqdef \mathcal{%
Q}(F_{+},F_{-}),$ and $\mathcal{Q}^{\mp }(F)\eqdef %
\mathcal{Q}(F_{-},F_{+})$.  

 These equations are coupled with the Maxwell system: 
\begin{eqnarray*}
\partial _{t}E-c\nabla _{x}\times B &=&-4\pi \int_{{\mathbb{R}}^{\dim
}}\left\{ e_{+}\frac{p}{p_{+}^{0}}F_{+}-e_{-}\frac{p}{p_{-}^{0}}%
F_{-}\right\} dp, \\
\partial _{t}B+c\nabla _{x}\times E &=&0,
\end{eqnarray*}%
with constraints 
\begin{equation*}
\nabla _{x}\cdot E=4\pi \int_{{\mathbb{R}}^{\dim}}\left\{ e_{+}F_{+}-e_{-}F_{-}\right\} dp,
\quad 
\nabla _{x}\cdot B=0.
\end{equation*}%
The initial conditions are $F_{\pm }(0,x,p)=F_{0,\pm }(x,p)$, $E(0,x)=E_{0}(x)$,
and $B(0,x)=B_{0}(x)$.   Here $F_{\pm }(t,x,p)\geq 0$ are the spatially
periodic number density functions for ions $(+)$ and electrons $(-)$, at
time $t\geq 0$, position $x=(x_{1},x_2 ,x_{\dim })\in {\mathbb{T}}^{\dim }%
\eqdef [-\pi ,\pi ]^{\dim }$ and momentum $%
p=(p_{1},p_2 ,p_{\dim })\in {\mathbb{R}}^{\dim }$. The constants $\pm
e_{\pm }$ and $m_{\pm }$ are the magnitude of the particles charges and rest
masses respectively. The energy of a particle is given by 
$p_{\pm }^{0}=\sqrt{(m_{\pm }c)^{2}+|p|^{2}}$ and $c$ is the speed of light.  Note that here and below $\pm$ indicates two possible sign configurations.

For number density functions $F_+(p)$ and $F_-(p)$ a collision operator
should satisfy 
\begin{equation*}
\int_{{\mathbb{R}}^\dim} \left\{ 
\begin{pmatrix}
1 \\ 
p \\ 
p_+^0%
\end{pmatrix}%
\mathcal{Q}(F_+, F_-)(p) + 
\begin{pmatrix}
1 \\ 
p \\ 
p_-^0%
\end{pmatrix}
\mathcal{Q}(F_-,F_+)(p) \right\} dp = 0.
\end{equation*}
The same property holds for the other sign configurations. By integrating the
relativistic Vlasov-Maxwell-Boltzmann system and plugging in this identity,
we obtain the conservation of mass, total momentum and total energy for
solutions as 
\begin{equation}
\begin{split}
& \frac d{dt}\int_{{\mathbb{T}}^\dim\times {\mathbb{R}}^\dim}m_{+}F_{+}(t)
= \frac d{dt}\int_{{\mathbb{T}}^\dim\times {\mathbb{R}}^\dim}m_{-}F_{-}(t)=0,
\\
& \frac d{dt}\left\{ \int_{{\mathbb{T}}^\dim\times {\mathbb{R}}^\dim}
p(m_{+}F_{+}(t)+m_{-}F_{-}(t))+\frac 1{4\pi }\int_{{\mathbb{T}}%
^\dim}E(t)\times B(t)\right\} =0, \\
& \frac d{dt}\left\{ \frac 12\int_{{\mathbb{T}}^\dim\times {\mathbb{R}}%
^\dim}(m_{+}p_+^0F_{+}(t) +m_{-}p_-^0F_{-}(t))+{\frac 1{8\pi }}\int_{{%
\mathbb{T}}^\dim}|E(t)|^2+|B(t)|^2\right\} =0.
\end{split}
\label{conserv}
\end{equation}
The entropy of the relativistic Vlasov-Maxwell-Boltzmann system is defined as 
\begin{equation}
\mathcal{H}(t) \eqdef  -\int_{{\mathbb{T}}^\dim\times 
{\mathbb{R}}^\dim} dx dp ~ \left\{F_{+}(t, x, p)\log F_{+}(t, x, p) +
F_{-}(t, x, p)\log F_{-}(t, x, p) \right\}.  
\notag
\end{equation}
Then the celebrated Boltzmann H-theorem for the relativistic Vlasov-Maxwell-Boltzmann system
corresponds to the following formal statement 
\begin{equation}
\frac d{dt}\mathcal{H}(t) \ge 0,  \notag
\end{equation}
which says that the entropy of solutions is non-decreasing as time passes. 

The global relativistic Maxwellian (a.k.a. the J\"{u}ttner solution) is
given by 
\begin{equation*}
J_{\pm }(p)\eqdef  \frac{\exp \left( -c\pZ _{\pm
}/(k_{B}T_{\pm })\right) }{4\pi e_{\pm }m_{\pm }^{2}ck_{B}T_{\pm
}K_{2}(m_{\pm }c^{2}/(k_{B}T_{\pm }))},
\end{equation*}%
where $k_B > 0$ denotes \emph{Boltzmann's constant}, $K_{2}(\cdot )$ is the
Bessel function  $K_{2}(z)\eqdef \frac{z^{2}}{2}\int_{1}^{\infty
}e^{-zt}(t^{2}-1)^{3/2}dt$, and $T_{\pm }$ is the temperature. From the Maxwell
system %$(\ref{maxwell}) 
and the periodic boundary condition of $E(t,x)$, we see that $\frac{d}{dt}%
\int_{{\mathbb{T}}^\dim}B(t,x)dx\eqdef 0.$ We thus
have a constant $\bar{B}$ such that 
\begin{equation}
\frac{1}{|{\mathbb{T}}^\dim|}\int_{{\mathbb{T}}^\dim}B(t,x)dx=\bar{B}.
\label{bar}
\end{equation}%
Let $[\cdot ,\cdot ]$ denote a column vector. We then have the following
steady state solution to the relativistic Vlasov-Maxwell-Boltzmann system 
\begin{equation*}
\lbrack F_{\pm }(t,x,p),E(t,x),B(t,x)]=[J_{\pm },0,\bar{B}],
\end{equation*}%
which maximizes the entropy.

We furthermore define the relativistic Boltzmann collision operator \cite{MR635279} as  
\begin{equation}
Q(F_\pm, G_\mp) \eqdef
 \int_{\threed} \frac{dq}{\qZ_{\mp}}
\int_{\threed}\frac{dq^\prime}{\qZprime_{\mp}}
\int_{\threed}\frac{dp^\prime}{\pZprime_{\pm}}
~ W_{\pm |\mp} ~ [F_\pm(\rP)G_\mp(\rQ)-F_\pm(p)G_\mp(q)].  
\label{rCOL}
\end{equation}
It is written similarly for other sign configurations.  Here the ``transition rate'' $W_{\pm |\mp}
=
W_{\pm |\mp}(p, q | p^\prime, q^\prime)$ is defined as
\begin{equation}
W_{\pm |\mp}  %(p, q | p^\prime, q^\prime)
\eqdef
 s ~ \sigma_{\pm |\mp}(\relMOM, \theta) ~
\delta(\pZ_{\pm}+\qZ_{\mp}-\pZprime_{\pm}-\qZprime_{\mp})
\delta^{(\dim)}(p+q-p^\prime-q^\prime).    \label{transition}
\end{equation}
The quantities here are $s=s(p_{\pm}, q_{\mp})$, $\relMOM=\relMOM(p_{\pm}, q_{\mp})$, $\theta=\theta(p_{\pm}, q_{\mp})$ and $\sigma_{\pm |\mp}(\relMOM, \theta)$ are defined exactly as in the following  sub-section.

The physical intuition provided by the Boltzmann H-theorem is to say that solutions should converge to their steady state, which is chosen by the initial conditions and the conservation laws \eqref{conserv}, as time goes to infinity.  Our goal in this work is to prove this global existence and rapid convergence in the context of perturbations.

We define the standard perturbation $f_{\pm }(t,x,p)$ to $J_\pm$ as 
\begin{equation*}
F_{\pm }\eqdef  J_{\pm } +\sqrt{J }_{\pm }f_{\pm }.
\end{equation*}
We will plug this ansatz into \eqref{rVMB} to derive a perturbed system for $f_{\pm }(t,x,p)$, $E(t,x)$
and $B(t,x)$. The two relativistic Vlasov-Maxwell-Boltzmann equations for
the perturbation $f=[f_+, f_-]$ take the form 
\begin{gather}
\left\{ \partial_t+c\frac{p}{\pZ_\pm}\cdot \nabla_x
\pm e_\pm\left(E+\frac{p}{\pZ_\pm}\times B\right)\cdot \nabla_p\right\} f_\pm \mp\frac{e_\pm c}{k_BT}%
\left\{E\cdot \frac{p}{\pZ_\pm}\right\} \sqrt{J}_\pm+L_\pm f  \notag \\
= \pm \frac{e_\pm c}{2 k_BT}\left\{E\cdot \frac{p}{\pZ_\pm}\right\}f_\pm
+\Gamma_\pm(f,f),  \label{rvmlC}
\end{gather}
with $f(0, x, p)=f_0(x,p)=[f_{0,+}(x,p),f_{0,-}(x,p)] $. The linear operator 
$L_\pm f$, defined in (\ref{L}), and the nonlinear operator $\Gamma_\pm(f,f)$,
defined in \eqref{gamma0}, are derived from an expansion of the Boltzmann
collision operator \eqref{rCOL}. 

In particular, 
using \eqref{transition}, we
observe that the collision operator \eqref{rCOL} satisfies 
\begin{equation*}
\mathcal{Q}(J_+, J_+) =\mathcal{Q}(J_+, J_-) =\mathcal{Q}(J_-, J_+) =%
\mathcal{Q}(J_-, J_-) =0.
\end{equation*}
Then, with $f=[f_+, f_-]$ and $h=[h_+, h_-]$, 
we can write the nonlinear operators as
\begin{equation}
\Gamma_\pm (f,h)  \eqdef
J_\pm^{-1/2}\mathcal{Q}(\sqrt{J}_\pm f_\pm,\sqrt{J}_\pm h_\pm) 
+ 
J_\pm^{-1/2}
\mathcal{Q}(\sqrt{J}_\pm f_\pm,\sqrt{J}_\mp h_\mp).
\label{gamma0}
\end{equation}
Furthermore the linearized collision operators take the form
\begin{equation}
L_\pm (h)  \eqdef
-\Gamma_\pm (h,\sqrt{J}) 
- 
\Gamma_\pm (\sqrt{J},h) .
\label{L}
\end{equation}
We estimate these operators in Sections \ref{sec:momD} and \ref{sec:linearE}.

In this linearized system, 
the coupled Maxwell system is given by
\begin{equation}
\begin{split}
\partial _tE-c\nabla_x \times B &= -4\pi \mathcal{J} \overset{%
\mbox{\tiny{def}}}{=} -4\pi \int_{{\mathbb{R}}^\dim}\left\{e_+\frac
p{\pZ _+}\sqrt{J}_+f_{+}-e_-\frac p{\pZ _-}\sqrt{J}_-f_{-}\right\}dp, \\
\partial _tB + c\nabla_x \times E &=0,
\end{split}
\label{maxwellC}
\end{equation}
with constraints 
\begin{gather}
\nabla_x \cdot E=4\pi\rho \eqdef  4\pi\int_{{\mathbb{R}}^\dim}\left\{e_+\sqrt{J}_+f_{+}-e_-\sqrt{J}_-f_{-}\right\}dp, \quad
\nabla_x \cdot B=0.  \label{constraintC}
\end{gather}
The charge density and current density
due to particles are denoted $\rho$ and $\mathcal{J}$ respectively. In
computing $\rho$, we have used the normalization $\int_{{\mathbb{R%
}}^\dim} J_\pm(p) dp = \frac{1}{e_\pm}. $

{\bf In all of the following developments, none of the physical constants will affect the results of our analysis.  Then without loss of generality but for the sake of simplicity, we  normalize all constants to one.  We set   }
$$
J_\pm (p) = J(p) = \frac{e^{-\pZ}}{4\pi}.
$$
{\bf We drop the inessential $\pm$ notation all over, in particular 
we use the kernel notation $\sigma(\relMOM, \theta)=\sigma_{\pm |\mp}(\relMOM, \theta)$ and we set $\pZ_\pm = \pZ \eqdef \sqrt{1+|p|^2}$, etc.}

Furthermore we assume that initially $[F_{0},E_{0,}B_{0}]$ has the same mass, total momentum and total energy %
\eqref{conserv} as the steady state $[J,0,\bar{B}]$, then we can
rewrite the conservation laws in terms of the perturbation $[f,E,B]$ as
follows: 
\begin{gather}
\int_{\mathbb{T}^{3}\times \mathbb{R}^{3}}dxdp~f_{+}(t)\sqrt{J} =\int_{%
\mathbb{T}^{3}\times \mathbb{R}^{3}}dxdp~f_{-}(t)\sqrt{J}=0,  \label{ma} 
\\
\int_{\mathbb{T}^{3}\times \mathbb{R}^{3}}dxdp~p\left\{ f_{+}(t) + f_{-}(t)\right\}\sqrt{J(p)}  =-\frac{1}{4\pi }\int_{\mathbb{T}%
^{3}}dx~E(t)\times B(t),  \label{mo} 
\\
\int_{\mathbb{T}^{3}\times \mathbb{R}^{3}}dxdp~\pZ \left\{ f_{+}(t)
+ f_{-}(t) \right\}\sqrt{J} =-\frac{1}{8\pi }\int_{\mathbb{T}%
^{3}}dx~|E(t)|^{2}+|B(t)-\bar{B}|^{2}.  \label{en}
\end{gather}%
We have used (\ref{bar}) for the normalized energy conservation \eqref{en}. 

In the next sub-section we will discuss reductions of the collision operator \eqref{rCOL}.

\subsection{Collision operator in the Glassey-Strauss frame}

In a pioneering work of Glassey and Strauss \cite{MR1211782}, the collision
operator $\mathcal{Q}$ was represented as follows: 
\begin{equation}
\mathcal{Q}(f,h)
=
\int_{{\mathbb{R}}^{\dim
}\times {\mathbb{S}}^{2}}\frac{s\sigma (g,\theta )}{\pZ \qZ}B(p,q,\omega
)[f(\rP)h(\rQ)-f(p)h(q)]d\omega dq,  
\label{collisionGS}
\end{equation}%
where the kernel is 
\begin{equation}
B(p,q,\omega )\eqdef \frac{(\pZ +\qZ)^{2}\pZ \qZ\left\vert \omega \cdot \left( \frac{p}{\pZ }-%
\frac{q}{\qZ}\right) \right\vert }{\left[ (\pZ +\qZ)^{2}-(\omega \cdot
\lbrack p+q])^{2}\right] ^{2}}.  \label{kernelGS}
\end{equation}%
Above the quantity $s=s(p,q)$, which is the square of the energy in the \textquotedblleft
center of momentum\textquotedblright\ system, $p+q=0$, is defined as 
\begin{equation*}
s\eqdef 2(\pZ \qZ-p\cdot q+1).
\end{equation*}%
The relative momentum, $\relMOM=\relMOM (p,q)$, is denoted 
\begin{equation}
\relMOM\eqdef \sqrt{2(\pZ \qZ-p\cdot q-1)}.
\label{gDEFINITION}
\end{equation}
Notice that $s=g^{2}+4$. We point out that this notation, which
is used in \cite{MR635279}, may differ from other authors notation by a
constant factor.

The condition for elastic collisions is then given by 
\begin{equation}
\begin{array}{ccc}
\pZ+ \qZ & = & \pZprime+\qZprime, 
\\ 
p+q & = & p^\prime+q^\prime.
\end{array}
\label{collisionalCONSERVATION}
\end{equation}
In this expression, the post collisional momentum are given as follows 
\begin{equation}
\begin{array}{ccc}
\rP & = & p+a(p,q,\omega )\omega, \\ 
\rQ & = & q-a(p,q,\omega )\omega,%
\end{array}
\label{postCOLLvelGS}
\end{equation}
where 
\begin{equation*}
a(p,q,\omega )\eqdef  \frac{2(p^0+q^0)p^0q^0\left\{
\omega \cdot \left( \frac{q}{q^0}-\frac{p}{p^0}\right) \right\} }{%
(p^0+q^0)^{2}-\left\{ \omega \cdot \left( p+q\right) \right\} ^{2}}.
\end{equation*}
The Jacobian for the transformation $(p, q)\to (p^\prime, q^\prime)$ in
these variables \cite{MR1105532} is 
\begin{equation}
\frac{\partial (\rP,\rQ)}{\partial (p,q)}=- \frac{\pZprime\qZprime}{p^0q^0}.  \label{PARTICLEjacobian}
\end{equation}
Now we turn to a discussion of the collision kernel $\sigma (g, \theta)$ in \eqref{collisionGS}.   The kernel $\sigma(g, \theta)$ measure's the interactions between particles. See 
\cite{DEnotMSI,MR933458} for a physical discussion of general assumptions.  We use the following hypothesis.
\newline

\noindent \textbf{Hypothesis on the collision kernel:} We consider the
``hard ball'' condition 
\begin{equation}
\sigma (g,\theta)=\text{constant}.  \notag
\end{equation}%
This condition is implicitly used throughout the rest of the article. In
fact to simplify the notation, without loss of generality, in the following
we use the normalized condition $\sigma (g,\theta)=1$. The Newtonian limit,
as $c\rightarrow \infty $, in this situation is the Newtonian hard-sphere
Boltzmann collision operator \cite{strainNEWT}. \newline

In the next section we will discuss our main results.

\section{Statement of the main results}

Let the multi-indices $\ag$ and $\beta$ be $\ag =[\ag^{0}, \ag ^{1},\ag
^{2},\ag ^{3}]$ and $\beta =[\beta ^{1},\beta ^{2},\beta ^{3}].$ We will use the
following notation for a high order derivative 
\begin{equation*}
\partial _{\beta }^{\ag }\eqdef 
\partial_{t}^{\ag ^{0}}
\partial_{x_{1}}^{\ag ^{1}}\partial _{x_{2}}^{\ag ^{2}}\partial
_{x_{3}}^{\ag ^{3}}\partial _{p_{1}}^{\beta ^{1}}\partial _{p_{2}}^{\beta
^{2}}\partial _{p_{3}}^{\beta ^{3}}.
\end{equation*}%
We sometimes also use the notation $\beta_0$, $\beta_1$, $\beta_2$ and $\alpha$ to denote multi-indices with three components such as $\beta$.  Then if each component of $\beta $ is not greater than that of $\beta_1$, 
we denote this by $\beta \le \beta_1$, also $\beta <\beta_1$ means $\beta
\le \beta_1$ and $|\beta |<|\beta_1|$
where  $|\beta | = \beta^1 + \beta^2 + \beta^3$ as usual.
We also denote a collection of weight functions by 
\begin{equation*}
w_{\ell }
\eqdef 
\left\langle {p}\right\rangle^{\ell},
\quad
\ang{p} \eqdef
\sqrt{1+|p|^2},
\quad \ell \in {\mathbb{R}}.
\end{equation*}
Given a solution $[f_{\pm }(t,x,p),E(t,x),B(t,x)]$ to the relativistic
Vlasov-Maxwell-Boltzmann system \eqref{rvmlC}, \eqref{maxwellC}, and %
\eqref{constraintC}, we define the full instant energy functional to be a continuous function, $\CE_{N,\ell}(t)$, which satisfies:
\begin{equation}
\CE_{N,\ell}(t)\approx 
\sum_{|\ag |+|\beta |\leq N}\Vert w_{\ell}\partial _{\beta }^{\ag }f(t)\Vert ^{2}
+
\sum_{|\ag |\leq N}\Vert \partial ^{\ag }[E(t),B(t)]\Vert ^{2}.
\label{def.eNmN}
\end{equation}
At time $t=0$ the time derivatives in $\CE_{N,\ell}(0)$ are defined customarily through equations \eqref{rvmlC} and \eqref{maxwellC}.  In \eqref{def.eNmN} and the rest of this paper, the norm $\| \cdot \|$ denotes either the $L^2(\mathbb{T}^3_x\times\R^3_p)$ norm or just the $L^2(\mathbb{T}^3_x)$ without ambiguity depending upon the variables in the functions being measured.
Throughout the rest of this paper we furthermore assume $N\geq 4$ and $\ell \geq 0$.

We are now ready to state our main results:

\begin{theorem}
\label{mainTHM} Suppose that $[f_{0,\pm},E_{0},B_{0}]$ satisfies the constraint \eqref{constraintC} and the the conservation
laws \eqref{ma}, \eqref{mo}, \eqref{en} initially. Fix $\ell \geq 0$ and $N\geq 4$. Consider 
$
F_{0,\pm }(x,p)=J_{\pm }+\sqrt{J}_{\pm }f_{0,\pm }(x,p).
$
There is a constant $M>0$ such that if 
\begin{equation*}
\CE_{N,\ell}(0)\leq M,
\end{equation*}%
then there exists a unique global solution $[f_{\pm }(t,x,p),E(t,x),B(t,x)]$
to the perturbed relativistic Vlasov-Maxwell-Boltzmann system \eqref{rvmlC},
\eqref{maxwellC} with \eqref{constraintC} satisfying 
\begin{equation*}
\CE_{N,\ell}(t)\lesssim \CE_{N,\ell}(0).
\end{equation*}%
Moreover $F_{\pm }(t,x,p)=J_{\pm }+\sqrt{J}_{\pm }f_{\pm }(t,x,p)$
solves the relativistic Vlasov-Maxwell-Boltzmann system \eqref{rVMB}, 
and $F_{\pm }(t,x,p)\geq 0$ if this is so initially.

If $\CE_{N+k,\ell }(0)$ is sufficiently small for some $k>0$, then we
have rapid decay as 
\begin{equation*}
\sum_{|\ag |+|\beta |\leq N}\Vert w_{\ell}\partial _{\beta }^{\ag }f(t)\Vert ^{2}
+
\sum_{|\ag |\leq N}\Vert \partial ^{\ag }[E(t),B(t)-\bar{B}]\Vert ^{2}
\lesssim 
\CE_{N+k,\ell}(0)
\left( 1+t\right)^{-k},
\end{equation*}
where the constant $\bar{B}$ is defined in \eqref{bar}.
\end{theorem}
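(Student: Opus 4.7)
The plan is to execute a nonlinear energy method tailored to the fully relativistic, two-species, fully Lorentz-invariant setting. First I would establish local existence for \eqref{rvmlC}--\eqref{constraintC} by a standard linearized-iteration scheme and a continuation criterion phrased in terms of $\CE_{N,\ell}(t)$. Global existence then reduces to an \emph{a priori} bound of the form
\begin{equation*}
\frac{d}{dt}\CE_{N,\ell}(t) + \CD_{N,\ell}(t) \;\lesssim\; \sqrt{\CE_{N,\ell}(t)}\,\CD_{N,\ell}(t),
\end{equation*}
where $\CD_{N,\ell}$ is a dissipation functional capturing the coercivity of $L$ on the microscopic part $(I-\FP)f$ plus appropriate control of the macroscopic projection $\FP f$ and the electromagnetic field. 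With smallness of $\CE_{N,\ell}(0)$, a continuity argument closes the bound and yields $\CE_{N,\ell}(t)\lesssim \CE_{N,\ell}(0)$ globally; positivity $F_\pm\geq 0$ is preserved from the iteration.

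To derive the a priori estimate I would apply $\partial^{\ag}_{\be}$ to \eqref{rvmlC}, pair with $w_\ell^2\,\partial^{\ag}_{\be}f$, and integrate. The transport and Lorentz-force terms contribute commutators controlled by $[E,B]$-Sobolev norms. Pairing Maxwell's equations \eqref{maxwellC} with $\partial^{\ag}[E,B]$ together with the charge and current densities built from $f$ yields the field part of $\CE_{N,\ell}$. Coercivity of $L$, inherited from Section \ref{sec:linearE}, provides the microscopic piece of $\CD_{N,\ell}$, while the nonlinear $\Gamma$ and $E\cdot p$ terms are absorbed into $\CD_{N,\ell}$ using the weighted momentum-regularity estimates from Section \ref{sec:momD} together with the smallness of $\sqrt{\CE_{N,\ell}}$. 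Control of the full $\FP f$ component (coefficients $a_\pm,b,c$) is obtained via the linearized macroscopic equations using an interactive energy functional; the conservation laws \eqref{ma}--\eqref{en} together with Poincar\'e remove the zero-mode ambiguity and furnish the necessary spatial integrability.

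The main obstacle is controlling the momentum derivatives $\partial_{\be}\Gamma(f,h)$ and $\partial_\be L f$. Direct differentiation in the Glassey-Strauss representation \eqref{collisionGS}--\eqref{postCOLLvelGS} is catastrophic, because $\rP = p + a(p,q,\omega)\omega$ and $\rQ = q - a(p,q,\omega)\omega$ depend on $p$ through a factor whose $p$-derivatives grow badly in $|q|$, and the Jacobian \eqref{PARTICLEjacobian} inherits that growth under iteration. My strategy, mirroring the splitting announced in the abstract, would be to localize the $\omega$-integration into a piece where $|\omega\cdot(p/\pZ - q/\qZ)|$ is small (so the kinematic singularity in \eqref{kernelGS} is active) and its complement. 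On the singular piece I change variables to the center-of-momentum frame, where the post-collisional momenta are parametrized by a genuine unit vector on $\sph$ and depend smoothly on $p,q$, so $\partial_\be$ can be transferred onto $\sqrt{J}_\pm f_\pm$; on the regular piece the Glassey-Strauss formulation remains usable. Gluing the two contributions, and exploiting $w_\ell$ weights to compensate for polynomial losses in $\ang{p}$, produces bounds that feed directly into the energy inequality.

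For the polynomial decay, once the energy inequality is closed at both levels $N$ and $N+k$, smallness allows me to absorb nonlinear terms and obtain $\frac{d}{dt}\CE_{N,\ell}(t) + \CD_{N,\ell}(t)\le 0$. Combining Sobolev interpolation in $x$ with the macroscopic coercivity yields
\begin{equation*}
\CE_{N,\ell}(t) \;\lesssim\; \bigl(\CE_{N+k,\ell}(0)\bigr)^{1/(1+k)} \bigl(\CD_{N,\ell}(t)\bigr)^{k/(1+k)}.
\end{equation*}
A standard ODE comparison then gives $\CE_{N,\ell}(t)\lesssim \CE_{N+k,\ell}(0)(1+t)^{-k}$, which delivers the stated decay for all derivatives of $f$ and of $[E,B-\bar{B}]$ after accounting for \eqref{bar}.
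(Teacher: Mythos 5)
Your overall skeleton (local existence, coercivity of $L$ via the macroscopic equations, a weighted energy--dissipation inequality closed by a continuity argument, and the interpolation argument of the type in \cite{MR2209761} for the $(1+t)^{-k}$ decay) is the same route the paper takes. The genuine gap is at the heart of the matter: the momentum-derivative estimate for $\Gamma$ (the analogue of Theorem \ref{thm:nonLINest}), without which none of the energy estimates at the level of $\partial_\beta^\alpha$ with $|\beta|>0$ can be closed. Your proposed splitting is in the angular variable, according to the size of the kinematic factor $|\omega\cdot(p/\pZ-q/\qZ)|$, with the Glassey--Strauss frame kept on the ``regular'' piece and a passage to the center-of-momentum frame on the ``singular'' piece. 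This does not address the actual obstruction. The paper's splitting \eqref{ABsplitting} is in $(p,q)$: on $A$, where $|p|^{1/m}\le 2\qZ$, the Glassey--Strauss frame is used and the polynomial growth in $p$ of $\nabla_p\rP,\nabla_p\rQ$ from \eqref{GSgrowth} (compounded at order $N$) is absorbed by $J^{1/4}(q)$ precisely because $|p|\lesssim(\qZ)^m$ there; on $A_c$, where $|p|^{1/m}\ge 2\qZ$, the center-of-momentum frame is used and the derivative singularities of \eqref{pcM} at $p=\pm q$ are harmless because $|p\pm q|\ge|p|/2\ge 1/2$.

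Concretely, your angular splitting fails on both pieces. On the piece you keep in the Glassey--Strauss frame, the growth in \eqref{GSgrowth} is of size $\ang{q}^{5}(1+|p\cdot\omega|^{1/2}\mathbf{1}_{\{\cdots\}})$ and is not tied to $|\omega\cdot(p/\pZ-q/\qZ)|$ being bounded away from zero; it is fatal exactly in the regime where $|p|$ is large while $|q|$ stays bounded, where $J(q)$ gives no help and no fixed weight $w_\ell$ can absorb losses of arbitrarily high polynomial degree sitting against top-order derivatives evaluated at $\rP,\rQ$ --- this is the very reason momentum regularity had remained open within the Glassey--Strauss frame alone. On the piece you move to the center-of-momentum frame, the claim that $\PrP,\QrQ$ ``depend smoothly on $p,q$'' is false: \eqref{pcM} contains $(p+q)\,\frac{(p+q)\cdot\omega}{|p+q|^2}$, whose momentum derivatives blow up as $p\to -q$ (and problems also occur at $p=q$), and smallness of $|\omega\cdot(p/\pZ-q/\qZ)|$ does not exclude that set. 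Moreover the angular parameters of the two representations are different variables, related only through the identity preceding \eqref{moller} and \eqref{changeCM}, so an $\omega$-cutoff imposed in the Glassey--Strauss representation does not transfer to a clean localization in the center-of-momentum representation, and differentiating any $(p,q)$-dependent cutoff reintroduces exactly the factors you are trying to avoid. Finally, even on the region where the Glassey--Strauss frame is usable, one cannot simply differentiate \eqref{collisionGS}: the factor $|\omega\cdot(p/\pZ-q/\qZ)|$ in \eqref{kernelGS} is not smooth, and the paper first performs the change of variables $u=\pZ q-\qZ p$ of \eqref{uCHANGE} so that this factor becomes $|\omega\cdot u|/(\pZ\qZ)$ and is never differentiated; your proposal contains no such device. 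Without replacing your $\omega$-localization by a $(p,q)$-splitting of the type \eqref{ABsplitting} (or an equivalent mechanism), the nonlinear and linear estimates with $|\beta|>0$ do not close, and the rest of the argument cannot proceed.
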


There have been many investigations about various kinetic models for
describing charged particles. Standard
references include \cite{MR1898707,MR635279,MR1379589}.
We refer to several results such as
\cite{guoWS,MR2217287,MR1402446,MR2098116,MR1151987,MR933458,MR2366140}.  In \cite{MR1908664}, global classical
solutions were constructed for the Vlasov-Poisson-Boltzmann system (no
magnetic fields) via introduction of a nonlinear energy method for which the
linear collision operator $L$ is positive for solutions near Maxwellians. In 
\cite{MR2000470}, an improvement of such a method led to construction of
global solutions to the Vlasov-Maxwell-Boltzmann system in the presence of
magnetic field. In \cite{MR2259206}, such a construction was carried out in the whole space case using some new dissipation estimates. Even though the Vlasov-Maxwell-Boltzmann system can be
viewed as a `master system' for kinetic models, from general physical
principle, the classical (non-relativistic) Boltzmann is not compatible
with the (Lorentz invariant) Maxwell system, which obeys special relativity. It is
therefore important to study the relativistic effects for the relativistic
Vlasov-Maxwell-Boltzmann system \eqref{rVMB} and to generalize the result in 
\cite{MR2000470} to a relativistic setting. However, such a project was
easily stopped due to a severe difficulty of lack of regularity in the momentum $p$ variables for the relativistic Boltzmann equation.

In the Glassey-Strauss frame \eqref{collisionGS} and \eqref{postCOLLvelGS},
the following pointwise estimates were discovered by Glassey \& Strauss \cite{MR1105532} in 1991:
\begin{equation}
\left\vert \nabla _{p}q_{i}^{\prime }\right\vert +\left\vert \nabla
_{p}p_{i}^{\prime }\right\vert \leq C\ang{q}^{5}\left( 1+|p\cdot \omega |^{1/2}%
\mathbf{1}_{\{|p\cdot \omega |>|p\times \omega |^{3/2}\}}\right) .
\label{GSgrowth}
\end{equation}%
This is a sharp estimate at least in regards to the $p$ growth. Furthermore 
\begin{equation*}
\left\vert \nabla _{q}p_{i}^{\prime }\right\vert +\left\vert \nabla
_{q}q_{i}^{\prime }\right\vert \leq C\ang{q}^{5}\pZ .
\end{equation*}%
Although in this second estimate, no attempt was made to further refine it.
Notice that higher derivatives do not exhibit momentum growth in $p$: 
\begin{equation*}
\left\vert \nabla _{p}^{k}p_{i}^{\prime }\right\vert +\left\vert \nabla
_{p}^{k}q_{i}^{\prime }\right\vert +\left\vert \nabla _{q}^{k}p_{i}^{\prime
}\right\vert +\left\vert \nabla _{q}^{k}q_{i}^{\prime }\right\vert \leq
C\ang{q}^{5},\quad \forall k>1.
\end{equation*}%
The $q$ growth above does not cause any difficulty because we have strong
exponential decay in that variable in the linearized collision operator $\Gamma $. But the $p$ momentum growth
in \eqref{GSgrowth}, introduces high order growth of $p$ in $\{\left\vert
\nabla _{p}q_{i}^{\prime }\right\vert +\left\vert \nabla _{p}p_{i}^{\prime
}\right\vert \}^{N}$, within the highest order derivatives of $\partial
_{\beta }f(\rP)$ with $|\beta |=N.$

Such a growth phenomenon is purely a relativistic effect, which is absent in
the classical setting. Consequently, no regularity for the momentum variables, even
local in time, for the solutions of the Boltzmann equation has been
obtained. This is because of the presence of highest polynomial growth rate in $p$ in
the highest order derivatives of the solutions, which makes it impossible to close the 
estimates in any reasonable energy norm.  Up to now, all existing work for the
relativistic Boltzmann equation only involves spatial and temporal
regularity.  Unfortunately, it is necessary to obtain momentum regularity to
study the relativistic Vlasov-Maxwell-Boltzmann system due to the Lorentz
force term: $\left( E+\hat{p}\times B \right)\cdot \nabla _{p}f.$

\subsection{Collision operator in the center of mass frame}

Our key idea to overcome such a severe difficulty in the Glassey-Strauss
frame is to resort to the following center of mass representation of
the Boltzmann collision operator. We remark that the study of such a center
of mass frame was initiated recently in the absence of momentum derivatives 
\cite{strainCOOR,strainPHD}.   For a function $G:{\mathbb{R}}^{\dim }\times 
{\mathbb{R}}^{\dim }\times {\mathbb{R}}^{\dim }\times {\mathbb{R}}^{\dim
}\rightarrow {\mathbb{R}}$:
\begin{equation}
\int_{{\mathbb{S}}^{2}}d\omega ~\frac{sB(p,q,\omega )}{\pZ \qZ}%
~G(p,q,\rP,\rQ)=\int_{{\mathbb{S}}^{2}}d\omega ~v_{\o %
}~G(p,q,\PrP,\QrQ),  \notag
\end{equation}%
where $B(p,q,\omega )$ is given by \eqref{kernelGS} and 
$(\rP,\rQ)$ on the l.h.s. are given by \eqref{postCOLLvelGS}. On the
r.h.s. we use that $v_{\o}=v_{\o}(p,q)$ is the M\o ller velocity given by 
\begin{equation}
v_{\o }\eqdef \frac{1}{2}\frac{g\sqrt{s}}{\pZ \qZ}.
\label{moller}
\end{equation}%
The post-collisional momentum, $(\PrP,\QrQ)$, on
the r.h.s. can be written: 
\begin{equation}
\begin{split}
\PrP& \eqdef \frac{p+q}{2}+\frac{g}{2}\left( \omega +(\rho -1)(p+q)%
\frac{(p+q)\cdot \omega }{|p+q|^{2}}\right) , \\
\QrQ& \eqdef \frac{p+q}{2}-\frac{g}{2}\left( \omega +(\rho -1)(p+q)%
\frac{(p+q)\cdot \omega }{|p+q|^{2}}\right),
\label{pcM}
\end{split}%
\end{equation}%
where $\rho =(\pZ +\qZ)/\sqrt{s}$. See \cite[Corollary 5]{strainCOOR}
for basic properties of such a center of mass frame.    
In particular the Jacobian \eqref{PARTICLEjacobian} effectively also works here as
\begin{multline}\label{changeCM}
\int_{\threed} dp \int_{\threed} dq
\int_{\sph}d\omega
 ~ v_{\o } ~ \sigma(\relMOM, \theta) ~ G(p, q, \PrP, \QrQ)
\\
=
\int_{\threed} dp \int_{\threed} dq
\int_{\sph}d\omega 
 ~ v_{\o } ~ \sigma(\relMOM, \theta) ~ 
G(\PrP, \QrQ, p,q).
\end{multline}
A more detailed explanation is given in \cite[Corollary 5 and (23)]{strainCOOR}.

Clearly, there is also a problem in the center of mass variables from \eqref{pcM}. In these
variables it is straightforward to compute that high momentum derivatives of 
$\PrP$ and $\QrQ$ create high singularities when 
$p=q$ and $p=-q$. These two distinct problems in each separate
representation formula for the relativistic Boltzmann collision operator
illustrate the main reason why it has remained an open problem to prove
energy estimates with momentum regularity.

To resolve these difficulties, and to prove the main nonlinear estimate in Theorem \ref{thm:nonLINest} below, we
will split the desired estimate into two different cases. These cases
correspond to the following two different integration regions: 
\begin{equation}
A\eqdef \{|p|\leq 1\}\cup \{|p|\geq 1,|p|^{\frac{1}{m}}\leq 2\qZ\},
\quad 
A_{c}\eqdef \{|p|\geq 1,|p|^{1/m}\geq 2\qZ\}.  
\label{ABsplitting}
\end{equation}
Here $m\gg 1$ is taken to be a given large positive integer. On the set $A,$
we can use the Glassey-Strauss frame \eqref{postCOLLvelGS}. Large growing
polynomial momentum weights in $p$, as described above and in \eqref{GSgrowth} can be controlled by the factor $J^{1/4}(q)$ since $|p| \lesssim (\qZ)^{m}$ on $A$. On the other hand, on the region $A_{c},$ we will use
the center-of-momentum variables \eqref{pcM}. Note $|p|\geq
2|q|$ so that $|p\pm q|\geq \frac{|p|}{2}\geq \frac{1}{2}$. Then the deficiency
in these variables, namely that derivatives of \eqref{pcM}
create singularities (even though there is no momentum growth at infinity),
is fortunately avoided on the region $A_{c}$; meaning that our estimates in
this region are safe as well. Without such a magical use of the center of
mass frame, it is still an outstanding open question if one can control the
growth in $p$ solely within the Glassey-Strauss frame.

We would like to say that we think it would be interesting to study this analogous problem in the whole space $\R^3_x$  including the convergence rates, as in \cite{MR2259206,DS-VPB,DS-VMB,szSOFTwhole}.

\subsection{Notation}
In addition to the notation from \eqref{def.eNmN}, we will use the $L^2$ spaces
\begin{equation*}
\| h\|_2
\eqdef
\sqrt{\int_{\mathbb{T}^3 } ~ dx ~ 
\int_{\mathbb{R}^3} ~ dp ~  
|h(x,p)|^2},
\quad
| h|_2
\eqdef
\sqrt{\int_{\mathbb{R}^3} ~ dp ~  
|h(p)|^2}.
\end{equation*}
Similarly any norm represented by one set of lines instead of two only takes into account the momentum variables.
We also define 
$
\| h\|_\nu
=
\| h\|_2,
$
which is justified by \eqref{nuDEF} later on.  The $L^2(\mathbb{R}^3_p)$ inner product is denoted $\langle \cdot, \cdot \rangle$.   We use $(\cdot, \cdot )$ to denote the  $L^2(\mathbb{T}^3_x \times \mathbb{R}^3_p)$ inner product.
Now, for $\ell\in\mathbb{R}$, we consider the weighted spaces
\begin{gather*}
\|  h \|_{2,\ell}
\eqdef
\| w_{\ell} h \|_{2},
\quad
\| h\|_{\nu,\ell}
\eqdef
\| w_{\ell} h\|_{\nu},
\quad
|  h |_{2,\ell}
\eqdef
| w_{\ell} h |_{2},
\quad
| h|_{\nu,\ell}
\eqdef
| w_{\ell} h |_{\nu}.
\end{gather*}
We will furthermore use $A \lesssim B$ to mean
that $\exists C>0$ such that $A \leq C B$ holds uniformly over the range of
parameters which are present in the inequality (and that the precise
magnitude of the constant is unimportant). 
%In particular, whenever either $A$ or $B$ involves a function space norm, it will be implicit that the constant is uniform over all elements of the relevant space unless explicitly stated otherwise.  
The notation $B \gtrsim A$ is equivalent to $A \lesssim B$, and $A \approx B$
means that both $A \lesssim B$ and $B \lesssim A$.  We additionally use $C>0$ to denote a generic positive large constant and $c>0$ to denote a small constant; their exact values are considered to be inessential.

\subsection{Organization of the paper}  In Section \ref{sec:momD} we will prove the estimates for the momentum derivatives of the non-linear collision operator \eqref{gamma0} using the splitting into the Glassey-Strauss frame and the center of momentum frame.  Then in Section \ref{sec:linearE} we will use these nonlinear estimates to deduce quickly several linear estimates, using also \cite{strainSOFT}.  Lastly in Section \ref{sec2}, we show how to use our estimates to prove the global existence and rapid decay, following the arguments from \cite{MR2100057,MR2209761}.

\section{Momentum derivatives of the nonlinear collision operator}\label{sec:momD}

Recalling the decomposition of $A$ and $A_{c}$ in (\ref{ABsplitting})$,$
consider the smooth test function $\chi \in C_{0}^{\infty }([0,\infty ))$
such that $0\leq \chi \leq 1$, and $\chi (\rho )=1$ for $\rho \in \lbrack
0,1]$ with $\chi (\rho )=0$ for $\rho >2$. We use the splitting $1=\chi
_{A}(p,q)+\chi _{A_{c}}(p,q)$ with 
\begin{eqnarray*}
&&\chi _{A}(p,q)\eqdef \chi \left( \pZ \right)
+\left( 1-\chi \left( \pZ \right) \right) \chi \left( \frac{|p|^{\frac{1}{m}%
}}{\qZ}\right) , \\
&&\chi _{A_{c}}(p,q)\eqdef \left( 1-\chi \left(
\pZ \right) \right) \left( 1-\chi \left( \frac{|p|^{\frac{1}{m}}}{\qZ}%
\right) \right) .
\end{eqnarray*}%
We split $\Gamma (f_{1},f_{2})=\Gamma _{A}+\Gamma _{A_{c}}$ as 
\begin{equation}
\begin{split}
\Gamma _{A} &=
\int_{{\mathbb{R}}^{\dim }\times {\mathbb{S}}^{2}}  d\omega dq ~ \frac{s B(p,q,\omega) }{\pZ \qZ}
\sqrt{J(q)}~
[f_{1}(\rP)f_{2}(\rQ)-f_{1}(p)f_{2}(q)]\chi _{A}(p,q), 
\\
\Gamma _{A_{c}} &=
\int_{{\mathbb{R}}^{\dim }\times {\mathbb{S}}^{2}} d\omega dq ~
v_{\o}
\sqrt{J(q)}~
[f_{1}(\PrP)f_{2}(\QrQ)-f_{1}(p)f_{2}(q)]\chi_{A_{c}}(p,q).
\end{split}
\label{splitG}
\end{equation}
Here without loss of generality, we have taken $f_1$ and $f_2$ to be scalar functions.
Using these important decompositions, we will prove the main estimate:

\begin{theorem}\label{thm:nonLINest}
We have the following nonlinear estimate for any $|\beta| \ge 0$:
$$
\left| \ang{w^{2}_{\ell} \partial_\beta \Gamma(f_1,f_2),\partial_\beta f_3} \right|
\lesssim
| \partial_{\beta} f_3 |_{2,\ell}
\sum_{\beta_1 + \beta_2 \le \beta}
| \partial_{\beta_1} f_1 |_{2,\ell}
| \partial_{\beta_2} f_2 |_{2,\ell}.
$$
Here we can include any $\ell \ge 0$.  Then for $|\ag| + |\be| \le N$ with $N\ge 4$ we have
$$
\left| \left( w^{2}_{\ell} \partial_\be^\ag \Gamma(f_1,f_2),\partial_\be^\ag f_3\right) \right|
\lesssim
\| \partial_\be^\ag f_3 \|_{2,\ell}
\prod_{j=1,2}
\sum_{|\ag_1| + |\be_1| \le N}
\| \partial_{\be_1}^{\ag_1} f_j \|_{2,\ell}.
$$
The second estimate follows easily from the first and Sobolev embeddings.
\end{theorem}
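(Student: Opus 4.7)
The plan is to prove the first displayed estimate of Theorem \ref{thm:nonLINest} using the splitting $\Gamma = \Gamma_A + \Gamma_{A_c}$ supplied by \eqref{splitG}, and then deduce the second estimate by standard Leibniz plus Sobolev embedding arguments. The two pieces are handled in completely different collision frames, with each choice designed precisely to neutralize the obstruction present in the other.

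\smallskip

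\noindent\textbf{Estimate for $\Gamma_A$ (Glassey--Strauss frame).} Here I expand $\partial_\beta[f_1(\rP)f_2(\rQ)]$ and $\partial_\beta[f_1(p)f_2(q)]$ by Leibniz, with $p$-derivatives passing through the post-collisional momenta via the chain rule. These chain-rule factors are exactly $\nabla_p \rP,\nabla_p\rQ$, etc., and are subject to the Glassey--Strauss bound \eqref{GSgrowth}: they grow polynomially in $p$ to some power proportional to $|\beta|$. This is where the cutoff $\chi_A$ saves the day: on $A$ we have either $|p|\leq 1$ or $|p|^{1/m}\leq 2\qZ$, hence $\langle p\rangle \lesssim \langle q\rangle^{m}$. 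Writing $\sqrt{J(q)}= J(q)^{1/4}\,J(q)^{1/4}$ and absorbing all the $p$-polynomial weights (including the loss $w_\ell$) into $J(q)^{1/4}$ converts the entire kernel into something integrable with pointwise exponential decay in $q$. For the loss term the bound is immediate after Cauchy--Schwarz in $p$ against $\partial_\beta f_3$. For the gain term I perform the Glassey--Strauss change of variables $(q,\omega)\mapsto (\rP,\rQ)$ with Jacobian \eqref{PARTICLEjacobian}, which is safe because all the bad $p$-weights from \eqref{GSgrowth} have already been absorbed.

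\smallskip

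\noindent\textbf{Estimate for $\Gamma_{A_c}$ (center of mass frame).} On $A_c$ we have $|p|\geq 1$ and $|p|\geq (2\qZ)^m$, so $|p\pm q|\geq |p|/2\geq 1/2$. Applying $\partial_\beta$ to $f_1(\PrP)f_2(\QrQ)$ using \eqref{pcM} yields chain-rule factors that are rational in $p\pm q$ with possible singularities at $p=\pm q$; these are disarmed by the lower bound $|p\pm q|\geq 1/2$. Moreover differentiating \eqref{pcM} produces no polynomial growth at infinity, so no exponential absorption into $J(q)$ is required. The gain term is then handled by the center-of-mass duality \eqref{changeCM}: I swap $(\PrP,\QrQ)\leftrightarrow(p,q)$ so that $\partial_\beta$ can be distributed among $f_1$, $f_2$ and $f_3$ without hitting the collision map. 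What remains is a weighted $L^2$ bound, produced by Cauchy--Schwarz in $(q,\omega)$ against $v_\o\sqrt{J(q)}$, whose $q$-exponential decay eats the weight $w_{2\ell}$ coming from the test function and gives the product bound.

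\smallskip

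\noindent\textbf{Main obstacle and conclusion.} The central difficulty is that \eqref{GSgrowth} makes the Glassey--Strauss frame unusable for large $|p|$, while \eqref{pcM} introduces singularities at $p=\pm q$ that make the center-of-mass frame unusable at small $|p|$ or whenever $q$ is comparable to $p$; the cutoff pair $(\chi_A,\chi_{A_c})$ with the exponent $1/m$ is chosen so that on each region the offending failure of that frame is avoided. Once these two regional bounds are combined, the first estimate follows. For the second, I apply $\partial_\beta^\alpha$ and distribute via Leibniz in both $x$ and $p$; splitting the product so that every factor except at most one carries at most $N/2$ derivatives, then placing the factor with the most derivatives in $L^2_{x,p}$ and the other in $L^\infty_x L^2_p$, and invoking Sobolev embedding $H^2(\mathbb{T}^3)\hookrightarrow L^\infty(\mathbb{T}^3)$ (valid since $N\geq 4$), yields the stated inequality.
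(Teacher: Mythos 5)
Your overall architecture (the splitting $1=\chi_A+\chi_{A_c}$ from \eqref{splitG}, Glassey--Strauss frame on $A$, center of mass frame on $A_c$, weight transfer, pre--post collisional change of variables, and the Sobolev step for the $(x,p)$-estimate) matches the paper, and your treatment of $\Gamma_{A_c}$ is essentially the paper's argument. However, there is a genuine gap in your treatment of $\Gamma_A$. You propose to apply $\partial_\beta$ directly to the representation \eqref{collisionGS} and track only the chain-rule factors $\nabla_p\rP,\nabla_p\rQ$ via \eqref{GSgrowth}. But the $p$-derivatives also fall on the kernel itself, and $B(p,q,\omega)$ in \eqref{kernelGS} contains the factor $\left|\omega\cdot\left(\frac{p}{\pZ}-\frac{q}{\qZ}\right)\right|$, which is only Lipschitz in $p$: for $|\beta|\geq 2$ its derivatives are measures supported on the set $\{\omega\cdot(\hat p-\hat q)=0\}$, so the pointwise Leibniz expansion you rely on is not legitimate, and the resulting ``kernel'' cannot be bounded and absorbed into $J^{1/4}(q)$ as you claim. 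This is precisely the obstruction the paper removes by the change of variables $u=\pZ q-\qZ p$ in \eqref{uCHANGE}: after passing to \eqref{gainTchange} the singular factor becomes $|\omega\cdot u|$ with $u$ an integration variable, so $\partial_\beta$ only ever hits smooth functions of $p$; the resulting kernels $\kernelAlabel$ and $\postGSlab$ are then bounded in Lemma \ref{lem:GSderivativeEST} and the variables are changed back. Your proposal contains no substitute for this device, so the $\Gamma_A$ estimate does not go through as written for $|\beta|\geq 2$.

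Two smaller points. First, \eqref{GSgrowth} controls only first derivatives of the collision map; for higher $\beta$ one needs the separate (Fa\`a di Bruno type) bounds that the paper derives after the $u$-substitution, though on $A$ any crude polynomial bound in $(p,q)$ would indeed suffice. Second, on $A_c$ the statement that the singularities are ``disarmed by $|p\pm q|\geq 1/2$'' is slightly too quick: the factors $1/g$ and $1/\sqrt{s}$ produced by differentiating \eqref{pcM} degenerate as $p\to q$ even when $|p-q|$ is bounded below relative to nothing, and what is actually needed is the lower bound $g\gtrsim\sqrt{\ang{p}/\ang{q}}\gtrsim 1$ of \eqref{boundSg}, which uses both $|p-q|\gtrsim|p|$ and $\qZ\lesssim|p|^{1/m}$ on $A_c$; this is the content of Lemma \ref{newLEMgs} and is easy to add, so it is a gloss rather than an error.
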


Now this theorem will follow directly from our Lemmas \ref{thm:nonLINestA} and \ref{nonlinGest:B} below.  It will be our focus in the rest of this section to prove these estimates.

\subsection{Estimates in the Glassey-Strauss Frame $\Gamma _{A}$}\label{sec:gsA}

To avoid taking derivatives for the singular factor of $|\omega \cdot (\frac{p}{\pZ }-\frac{q}{\qZ})|$ inside $B(p,q,\omega )$ for $\partial _{\beta}\Gamma _{A}$ in \eqref{splitG}, we introduce the following change of variables $q\rightarrow u
$ (for fixed $p)$ as: 
\begin{equation}
u=\pZ q-\qZ p.  \label{uCHANGE}
\end{equation}
By (\ref{uCHANGE}), we have that $q=\frac{\qZ}{\pZ }p+\frac{u}{\pZ }$ and taking
norms on both sides yields 
\begin{equation*}
\qZ=(u\cdot p)+\sqrt{(u\cdot p)^{2}+|u|^{2}+(\pZ )^{2}}.
\end{equation*}%
Such a transformation (\ref{uCHANGE}) therefore defines an invertible
mapping with 
\begin{eqnarray*}
\frac{\partial u_{i}}{\partial q_{j}} &=&\pZ \delta _{ij}-\frac{q_{j}p_{i}}{%
\qZ}, \quad (i,j = 1,2,3),
\\
\left\vert \frac{\partial u}{\partial q}\right\vert  &=&\det \left( \frac{%
\partial u_{i}}{\partial q_{j}}\right) =\frac{(\pZ )^{2}}{\qZ}\left(
\pZ \qZ-p\cdot q\right) \geq \frac{(\pZ )^{2}}{\qZ}.
\end{eqnarray*}%
Since $|\omega \cdot (\frac{p}{\pZ }-\frac{q}{\qZ})|=\frac{\left\vert
\omega \cdot u\right\vert }{\pZ \qZ},$ we can express $\Gamma _{A}$ from \eqref{splitG} as 
\begin{equation}
\Gamma_{A}
=
\int_{{\mathbb{R}}^{\dim }\times {\mathbb{S}}^{2}}d\omega du
\left\vert \frac{\partial q}{\partial u}\right\vert \frac{s\tilde{B}\left\vert \omega \cdot u\right\vert }{\pZ \qZ}
\sqrt{J(q)}\{f_{1}(\rP)f_{2}(\rQ)-f_{1}(p)f_{2}(q)\}\chi _{A}(p,q),
\label{gainTchange}
\end{equation}%
where now 
\begin{equation}
\tilde{B}\eqdef \frac{(\pZ +\qZ)^{2}}{\left[ (\pZ +\qZ)^{2}-(\omega \cdot
\lbrack p+q])^{2}\right] ^{2}}.  \notag
\end{equation}
We take a high order derivative $\partial _{\beta }$ of \eqref{gainTchange}
to obtain 
\begin{eqnarray*}
|\partial _{\beta }\Gamma _{A}| 
&\lesssim &
\sum
\int_{{\mathbb{R}}^{\dim }\times {\mathbb{S}}^{2}}~\mathbf{1}%
_{|p|^{\frac{1}{m}}\lesssim \qZ}d\omega du~K_{\beta _{0}}^{A}~\sqrt{J(q)}%
~|(\partial _{\beta _{1}}f_{1})(\rP)(\partial _{\beta
_{2}}f_{2})(\rQ)~\mu _{\beta _{2}}^{\beta _{1}}| \\
&&
+
\sum
\int_{{\mathbb{R}}^{\dim }\times {\mathbb{S}}^{2}}~\mathbf{1}_{|p|^{\frac{1}{m}}\lesssim
\qZ}d\omega du~K_{\beta _{0}}^{A}~\sqrt{J(q)}~|\partial _{\beta
_{1}}f_{1}(p)\partial _{\beta _{2}}f_{2}(q)|,
\end{eqnarray*}%
where the sum is over ${\beta _{0}+\beta _{1}+\beta_{2}\leq \beta }$.  Furthermore
\begin{equation}
K_{\beta _{0}}^{A}=K_{\beta _{0}}^{A}(u,p,\omega )\overset{\mbox{\tiny{def}}}%
{=}\left\vert \omega \cdot u\right\vert \left\vert \partial _{\beta
_{0}}\left( \left\vert \frac{\partial q}{\partial u}\right\vert ~\frac{s%
\tilde{B}}{\pZ \qZ}~J^{1/2}(q)\chi _{A}(p,q)\right) \right\vert
J^{-1/2}(q).  \label{KestNEED}
\end{equation}%
Also $\mu _{\beta _{2}}^{\beta _{1}}=\mu _{\beta _{2}}^{\beta
_{1}}(u,p,\omega )$ is the term which results from applying the chain rule
to the post-collisional velocities $\rP$ and $\rQ$. Here $\mu _{\beta _{2}}^{\beta _{1}}$ contains the sum of products of high order momentum derivatives of the smooth functions $\rP$ and $\rQ$. The next step is to reverse this change of variables \eqref{uCHANGE} to go from $u$ back to an integration over $q$. After that change of variables:

\begin{lemma}
\label{lem:GSderivativeEST} On the set $A,$ we have the following estimates 
\begin{equation*}
w_{\ell}^{2}(p)\left\vert K_{\beta _{0}}^{A}(\pZ q-\qZ p,p,\omega
)\right\vert \lesssim \frac{\left\langle {q}\right\rangle ^{n}}{\pZ \qZ}.
\end{equation*}%
Similarly, we also have the upper bound of 
\begin{equation*}
\left\vert \mu _{\beta _{2}}^{\beta _{1}}(\pZ q-\qZ p,p,\omega
)\right\vert \lesssim \left\langle {q}\right\rangle ^{n}.
\end{equation*}%
Above $n\geq 1$ is a fixed large integer which depends upon $\ell \geq 0$, 
$\beta $, $\beta _{0}$, $\beta _{1}$, and $\beta _{2}$. 
\end{lemma}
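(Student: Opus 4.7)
The plan is to perform the differentiation $\partial_{\beta_0}$ in the $u$-fixed variables, then substitute $u=\pZ q-\qZ p$ back and reduce everything to a polynomial estimate in $(p,q)$ which can be absorbed using the defining constraint of $A$. The key structural fact: on $A$ we have either $|p|\le 1$ or $|p|^{1/m}\le 2\qZ$, so $\ang{p}\lesssim \ang{q}^{m}$. Consequently $w_\ell^2(p)\lesssim \ang{q}^{2\ell m}$ on $A$, and any polynomial growth in $p$ produced during differentiation can be absorbed into $\ang{q}^{n}$ by taking $n$ sufficiently large (depending on $\ell, \beta, \beta_0, \beta_1, \beta_2$).

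First I would express $\partial_{p_i}\big|_u$ in terms of $\partial_{p_i}\big|_q$ and $\partial_{q}$. Implicit differentiation of $u=\pZ q-\qZ p$ shows that $\partial_p q|_u$ is a smooth $3\times 3$ matrix of rational functions of $(p,q)$, with polynomial numerator and only the denominator $\pZ \qZ - p\cdot q = 1+\relMOM^2/2\ge 1$ from \eqref{gDEFINITION}, which is bounded below. Thus $\partial_{p_i}|_u$ is a first-order differential operator in $(p,q)$ with polynomially bounded coefficients, and $\partial_{\beta_0}|_u$ is a composition of such operators. For $K^A_{\beta_0}$, the argument of $\partial_{\beta_0}$ is a product of $|\partial q/\partial u|=\qZ/[(\pZ)^2(\pZ\qZ -p\cdot q)]$, $s\tilde B/(\pZ\qZ)$, $J^{1/2}(q)$, and $\chi_A(p,q)$. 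All denominators are bounded below since $\pZ\qZ -p\cdot q\ge 1$ and $(\pZ+\qZ)^2-(\omega\cdot(p+q))^2\ge s\ge 4$ (a consequence of $(\pZ+\qZ)^2=s+|p+q|^2$). The product rule then expresses $\partial_{\beta_0}$ of the product as $J^{1/2}(q)$ times a polynomial in $(p,q)$ divided by bounded-below factors; derivatives hitting $J^{1/2}(q)$ also produce only $J^{1/2}(q)$ times polynomials via the chain rule and Step~1. Multiplying by $J^{-1/2}(q)$ cancels the exponential, and $|\omega\cdot u|\le 2\pZ \qZ$ pairs against the intrinsic $1/(\pZ\qZ)$ in $s\tilde B/(\pZ\qZ)$ so that the overall bound retains the factor $1/(\pZ\qZ)$ as required.

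For $\mu^{\beta_1}_{\beta_2}$, the chain rule applied to $f_1(\rP(u,p,\omega))f_2(\rQ(u,p,\omega))$ yields a sum of products of derivatives $\partial^\alpha \rP$ and $\partial^\alpha \rQ$ with $|\alpha|\le |\beta|$, where each individual derivative is controlled by the Glassey--Strauss pointwise bound \eqref{GSgrowth} (with higher-order $p$-derivatives carrying no $p$-growth and $q$-derivatives carrying the extra factor $\pZ$). Translating the $u$-fixed $p$-derivatives via Step~1 only introduces additional polynomial factors in $(p,q)$. Hence $|\mu^{\beta_1}_{\beta_2}|$ is polynomial in $(p,q)$, and by absorbing $\ang{p}$-powers into $\ang{q}^n$ on $A$ we obtain the stated bound. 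The same absorption applied to the expression from the previous paragraph yields the $K^A_{\beta_0}$ bound.

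The main obstacle is the bookkeeping in Step~1: verifying that after repeated application of $\partial_p|_u$ no new denominators beyond $(\pZ\qZ -p\cdot q)$ and $(\pZ+\qZ)^2-(\omega\cdot(p+q))^2$ can appear, and that both of these quantities, already bounded below, remain bounded below after differentiation (which follows since differentiation only raises them to higher powers in denominators, which are still bounded below by positive constants). Once this is confirmed, the estimates are a matter of counting powers of $p$ and $q$ and choosing $n$ large.
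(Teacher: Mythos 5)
Your proposal is correct and takes essentially the same route as the paper: differentiate only at fixed $u$ (so the factor $|\omega\cdot u|$ is never hit), observe that every resulting expression is a ratio of polynomials whose denominators ($\pZ$, $\qZ$, $\pZ\qZ-p\cdot q$, $(\pZ+\qZ)^{2}-(\omega\cdot(p+q))^{2}$, etc.) are uniformly bounded below, hence polynomially bounded in $(p,q)$ after reverting $u=\pZ q-\qZ p$, and then absorb all $\ang{p}$-powers (including $w_{\ell}^{2}(p)$ and the factor $1/(\pZ\qZ)$) into $\ang{q}^{n}$ via $|p|\lesssim (\qZ)^{m}$ on $A$. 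The only cosmetic difference is that you convert $\partial_{p}|_{u}$ into a $(p,q)$-operator by implicit differentiation and quote the Glassey--Strauss pointwise bounds, whereas the paper differentiates directly in the $(u,p)$ variables and argues by induction on the rational-function structure.
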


\begin{proof}[Proof of Lemma \protect\ref{lem:GSderivativeEST}]
We start with the estimate for $\mu _{\beta _{2}}^{\beta _{1}}$.  Clearly, up
to constants, $\mu _{\beta _{2}}^{\beta _{1}}$ is a sum of products of terms
of the following form 
\begin{equation*}
(\partial _{\beta _{1}}\rP)^{\gamma _{1}}(\partial _{\beta
_{2}}\rQ)^{\gamma _{2}},
\end{equation*}%
where $\beta _{1}$, $\beta _{2}$, $\gamma _{1}$ and $\gamma _{2}$ are
suitable multi-indices which are all $\le |\beta |$.
(Note $\beta _{1}$ and $\beta _{2}$ in the previous display need not be the same as those in $\mu _{\beta _{2}}^{\beta _{1}}$.)
 It is therefore sufficient to estimate the size of
these derivatives from above. This follows from the multi-dimensional
generalization \cite{MR1325915}, from 1996, of the \textbf{Fa{\`{a}} di
Bruno formula} (1855).   Consider the case $|\beta| = 1$.  After the change of variables \eqref{uCHANGE} we have 
\begin{eqnarray}
\rP &=&p+\tilde{a}(p,u,\omega )\omega ,  \notag \\
\rQ &=&\frac{u}{\pZ }+\frac{\qZ }{\pZ }p-\tilde{a}(p,u,\omega
)\omega ,  \notag
\end{eqnarray}%
where 
\begin{equation*}
\tilde{a}(p,u,\omega )=\frac{2(\pZ +\qZ )~\omega \cdot u}{%
(\pZ +\qZ )^{2}-\left\{ \omega \cdot \left( p+q\right) \right\} ^{2}}=%
\frac{N}{D}.
\end{equation*}%
Our goal will be to estimate derivatives of these functions. We thus compute 
\begin{eqnarray}
\frac{\partial p_{k}^{\prime }}{\partial p_{j}} &=&\delta _{kj}+\omega _{k}%
\frac{\partial \tilde{a}(p,u,\omega )}{\partial p_{j}},  \notag \\
\frac{\partial q_{k}^{\prime }}{\partial p_{j}} &=&\frac{u_{k}p_{j}}{%
(\pZ )^{3}}+\frac{\qZ }{\pZ }\delta _{kj}+\frac{\qZ }{(\pZ )^{3}}%
p_{k}p_{j}+\frac{\partial \qZ }{\partial p_{j}}\frac{p_{k}}{\pZ }+\omega
_{k}\frac{\partial \tilde{a}(p,u,\omega )}{\partial p_{j}}.  \notag
\end{eqnarray}%
We compute the final derivative as 
\begin{gather*}
\frac{\partial \tilde{a}}{\partial p_{j}}=\frac{1}{D}\frac{\partial N}{%
\partial p_{j}}-\frac{N}{D^{2}}\frac{\partial D}{\partial p_{j}}, \\
\frac{\partial N}{\partial p_{j}}=2\omega \cdot u\left( \frac{p_{j}}{\pZ }+%
\frac{\partial \qZ }{\partial p_{j}}\right) , \\
\frac{\partial \qZ }{\partial p_{j}}=u_{j}+\frac{2(u\cdot p)u_{j}+2p_{j}}{%
\sqrt{(u\cdot p)^{2}+|u|^{2}+(\pZ )^{2}}}.
\end{gather*}%
The derivative of the denominator is further given by 
\begin{multline*}
\frac{\partial D}{\partial p_{j}}=2\left( \pZ +\qZ \right) \left( \frac{%
p_{j}}{\pZ }+\frac{\partial \qZ }{\partial p_{j}}\right) 
 \\
-2\omega \cdot \left( p+q\right) 
\sum_{k=1}^3\omega _{k}\left( \delta _{kj}+%
\frac{\qZ }{\pZ }\delta _{kj}+p_{k}\frac{\partial }{\partial p_{j}}\left( 
\frac{\qZ }{\pZ }\right) +\frac{u_{k}p_{j}}{(\pZ )^{3}}\right) .
\end{multline*}%
In particular we can write the whole derivatives of ($p^\prime$, $q^\prime$) as
\begin{eqnarray}
\frac{\partial p_{k}^{\prime }}{\partial p_{j}} &=&
\frac{f_{kj}(p,q,\omega, \pZ, \qZ, u)}{(\pZ)^a D^b (\sqrt{(u\cdot
p)^{2}+|u|^{2}+(\pZ )^{2}})^c},
\notag 
\\
\frac{\partial q_{k}^{\prime }}{\partial p_{j}} &=&
\frac{g_{kj}(p,q,\omega, \pZ, \qZ, u)}{(\pZ)^a D^b (\sqrt{(u\cdot
p)^{2}+|u|^{2}+(\pZ )^{2}})^c},
\notag
\end{eqnarray}
where $f_{kj}$ and $g_{kj}$ are smooth polynomials in the variables $(p,q,\omega, \pZ, \qZ, u)$.  Furthermore, 
$a$, $b$ and $c$ are positive exponents which depend upon $k$, $j$ and the form of ($p^\prime$, $q^\prime$).  These polynomials and exponents are quite lengthy to compute.  However, the key observation in these calculations is that the
denominators, e.g. $\pZ $, $D$, and $\sqrt{(u\cdot
p)^{2}+|u|^{2}+(\pZ )^{2}}$ are in all cases uniformly bounded from below,
so that no singularities are present. 

Therefore, after applying the reverse
change of variables $u\rightarrow \pZ q-\qZ p$ to \eqref{uCHANGE} we can
see that we always have the crude upper bound (for some $n>0$) of 
\begin{equation*}
\left\vert \frac{\partial p_{k}^{\prime }}{\partial p_{j}}\right\vert
+\left\vert \frac{\partial q_{k}^{\prime }}{\partial p_{j}}\right\vert
\lesssim (\left\langle {p}\right\rangle \left\langle {q}\right\rangle )^{n}.
\end{equation*}%
This estimate will conclude the second estimate in Lemma \ref%
{lem:GSderivativeEST} if all of the derivatives are first order derivatives.
Note that the exact value of $n$ is in fact unimportant to our argument. The
crucial observation now is that this pattern repeats for the higher
derivatives of order $\beta$ with $|\beta| >1$.  In particular, we see that
\begin{eqnarray}
%\frac{\partial^\beta p_{k}^{\prime }}{\partial p_\beta} 
\partial_\beta p_{k}^{\prime}
&=&
\frac{f_{\beta j}(p,q,\omega, \pZ, \qZ, u)}{(\pZ)^a D^b (\sqrt{(u\cdot
p)^{2}+|u|^{2}+(\pZ )^{2}})^c},
\notag 
\\
%\frac{\partial^\beta q_{k}^{\prime }}{\partial p_\beta} 
\partial_\beta q_{k}^{\prime}
&=&
\frac{g_{\beta j}(p,q,\omega, \pZ, \qZ, u)}{(\pZ)^a D^b (\sqrt{(u\cdot
p)^{2}+|u|^{2}+(\pZ )^{2}})^c},
\notag
\end{eqnarray}
where again $f_{\beta j}$ and $g_{\beta j}$ are smooth polynomials in the variables $(p,q,\omega, \pZ, \qZ, u)$.   And once again
$a$, $b$ and $c$ are (different) positive exponents which will depend upon $\beta$, $k$ and the form of ($p^\prime$, $q^\prime$).  This form of these high order derivatives is quickly deduced from the standard rules of differentiation, and for instance a simple induction procedure.  However to compute the exact expressions of $f_{\beta j}$ and $g_{\beta j}$ seems to be quite difficult.  A key observation is that computing these high order polynomial expressions explicitly is in fact not-necessary to our argument.  

Again the crucial point is that the denominators of 
$\partial_\beta p_{k}^{\prime}$
%$\frac{\partial^\beta p_{k}^{\prime }}{\partial p_\beta}$
and
$\partial_\beta q_{k}^{\prime}$
%$\frac{\partial^\beta q_{k}^{\prime }}{\partial p_\beta}$ 
are uniformly bounded from below by a positive constant, so that no singularities are present.
This implies that, after applying the change of variables $u\rightarrow \pZ q-\qZ p$, the higher order derivatives are all similarly bounded
above as
\begin{equation*}
\left\vert 
\partial_\beta p_{k}^{\prime}
\right\vert
+
\left\vert 
\partial_\beta q_{k}^{\prime}
\right\vert
\lesssim (\left\langle {p}\right\rangle \left\langle {q}\right\rangle )^{n},
\end{equation*}
for some (different) $n>0$. After that, the second estimate for $\mu _{\beta
_{2}}^{\beta _{1}}$ in Lemma \ref{lem:GSderivativeEST} is a consequence of
the region $A$ from \eqref{ABsplitting}. We conclude the estimate for $\mu
_{\beta _{2}}^{\beta _{1}}$.

The estimate for \eqref{KestNEED} is directly similar. The point is again that
momentum derivatives in \eqref{KestNEED} end up creating ratios of polynomials in the
variables $\omega$, $u$, $p$, $\pZ $, and $\qZ$.  The exact expressions created by the high order derivative 
$\partial_{\beta_0}$ in \eqref{KestNEED} is apparently quite difficult to compute in general, but fortunately this is not necessary.    Instead it is easily seen that the
denominators of these rational functions created by the high order derivative, $\partial_{\beta_0}$,  are all uniformly bounded from below. Thus
after reversing the change of variables in \eqref{uCHANGE} the term %
\eqref{KestNEED} must be bounded from above by a constant multiple of $%
(\left\langle {p}\right\rangle \left\langle {q}\right\rangle )^{n}$ for some 
$n>0$.
\end{proof}

%We are now ready to establish the following:

\begin{lemma}
\label{thm:nonLINestA} We have the following estimate for $\Gamma _{A}$ with any $\ell \geq 0$:  
\begin{eqnarray}\notag
&
|w^{2}_{\ell}(p)\partial _{\beta }\Gamma _{A}|\lesssim &
e^{-c|p|^{1/m}}
\int_{{\mathbb{R}}^{\dim}\times {\mathbb{S}}^{2}}~ dq d\omega~\mathbf{1}_{|p|_{{}}^{\frac{1}{m}}\lesssim
\qZ }
~J^{\frac{1}{4}}(q)~
\\
&&
\times \sum_{\beta _{1}+\beta _{2}\leq \beta }
 \left\{
|(\partial _{\beta_{1}}f_{1})(\rP)(\partial _{\beta _{2}}f_{2})(\rQ)|  
+
|(\partial _{\beta _{1}}f_{1})(p)(\partial _{\beta
_{2}}f_{2})(q)|
\right\}.
 \label{gammaA}
\end{eqnarray}
Moreover, from that estimate one can deduce the following uniform bound
\begin{equation}
\left\vert \left\langle {w^{2}_{\ell }\partial _{\beta }\Gamma }_{A}{%
(f_{1},f_{2}),\partial _{\beta }f_{3}}\right\rangle \right\vert \lesssim
|\partial _{\beta }f_{3}|_{2}
\sum_{\beta _{1}+\beta _{2}\leq \beta}|\partial _{\beta _{1}}f_{1}|_{2 }
|\partial _{\beta_{2}}f_{2}|_{2 }.  \label{gammaA123}
\end{equation}
\end{lemma}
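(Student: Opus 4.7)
My plan has two parts: first establish the pointwise bound \eqref{gammaA}, then deduce the pairing \eqref{gammaA123} from it. I expect the pointwise bound to be where the splitting into the region $A$ pays off, and the pairing to follow routinely via Cauchy--Schwarz and a pre--post change of variables.

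For \eqref{gammaA}, I would start from the expression for $|\partial_\beta \Gamma_A|$ displayed just after equation \eqref{gainTchange}. Reverting the change of variables $u \mapsto q$ in that expression and applying Lemma \ref{lem:GSderivativeEST} gives $w_\ell^2(p) |K^A_{\beta_0}| \lesssim \langle q\rangle^n/(p^0 q^0)$ and $|\mu^{\beta_1}_{\beta_2}| \lesssim \langle q\rangle^n$ on the support of $\chi_A$, where $n$ is chosen large enough to absorb $\ell$ and all the relevant $\beta$-multi-indices. On this support the constraint $|p|^{1/m} \lesssim q^0$ allows the split $\sqrt{J(q)} \lesssim e^{-q^0/4}\, J^{1/4}(q)$ with $e^{-q^0/4} \lesssim e^{-c|p|^{1/m}}$. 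The surplus of exponential decay in $q^0$ dominates the polynomial factor $\langle q\rangle^{2n}/(p^0 q^0)$, yielding \eqref{gammaA} once a $\langle q\rangle^{-2n}$ worth of $e^{-q^0/4}$ is absorbed into $J^{1/4}(q)$.

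For \eqref{gammaA123}, I split into the loss and gain contributions coming from \eqref{gammaA}. The loss part is immediate: Cauchy--Schwarz in $q$ with $J^{1/4}(q) \in L^2_q$ bounds the $q$-integral by $|\partial_{\beta_2} f_2|_2$, then Cauchy--Schwarz in $p$ using $e^{-c|p|^{1/m}} \in L^\infty_p \cap L^1_p$ gives $|\partial_{\beta_1} f_1|_2 |\partial_\beta f_3|_2$. For the gain part, first peel off $|\partial_\beta f_3(p)|$ by Cauchy--Schwarz in $p$, reducing matters to an integral whose integrand contains $J^{1/4}(q)\, e^{-c|p|^{1/m}} |(\partial_{\beta_1} f_1)(p')\, (\partial_{\beta_2} f_2)(q')|$. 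A second Cauchy--Schwarz in $(q,\omega)$, using $\int J^{1/4}(q)\, dq\, d\omega \lesssim 1$, linearizes the $f_1 f_2$ factor into $|(\partial_{\beta_1} f_1)(p')|^2 |(\partial_{\beta_2} f_2)(q')|^2$, which is then unwound by the pre--post change of variables \eqref{PARTICLEjacobian} to recover $|\partial_{\beta_1} f_1|_2^2 |\partial_{\beta_2} f_2|_2^2$.

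The main obstacle I foresee is the book-keeping in the last step: after the map $(p,q) \mapsto (p',q')$, the Jacobian factor $p^0 q^0/(p'^0 q'^0)$ together with the composed weight $e^{-c|p|^{1/m}} J^{1/4}(q)$ must remain uniformly integrable in $(p', q', \omega)$. The key input is the defining property of $A$ that $|p| \leq (2 q^0)^m$, which forces $p^0$ to be polynomially controlled by $q^0$; combined with energy conservation $p^0 + q^0 = p'^0 + q'^0$ and the exponential decay of $J^{1/4}(q)$, every algebraic factor that appears is absorbed. This delicate interplay between the polynomial $A$-region constraint and the exponential Maxwellian decay is precisely what was engineered by the splitting \eqref{ABsplitting}.
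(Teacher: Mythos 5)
Your proposal is correct and follows essentially the same route as the paper: \eqref{gammaA} comes from Lemma \ref{lem:GSderivativeEST} together with absorbing all polynomial factors (in $p$ and $q$) into the Maxwellian decay of $\sqrt{J(q)}$, which works precisely because $|p|\lesssim (\qZ)^m$ on $A$, and \eqref{gammaA123} then follows by Cauchy--Schwarz plus the pre--post collisional change of variables \eqref{PARTICLEjacobian}. Your extra care about the Jacobian factor and the weight after the change of variables is consistent with (indeed slightly more explicit than) the paper's argument, where the retained factor $J^{1/4}(q)/(\pZ\qZ)$ and the region-$A$ constraint make the transformed integrand uniformly bounded.
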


%\Red{FIX UP THE THEOREM ABOVE. ADD COMMENTARY HERE.}

\begin{proof} The proof of (\ref%
{gammaA}) follows directly from the previous Lemma \ref{lem:GSderivativeEST} by noting that $J^{-\frac{%
1}{8}}(q)\gtrsim e^{c|p|^{1/m}}\gtrsim ( 1+|p|+|q|)^{n}$ for any large $n>0$ 
since $|p| \lesssim (\qZ )^{m}$. 

To establish (\ref{gammaA123}), we apply the
Cauchy-Schwartz inequality to obtain
\begin{multline*}
\left\vert \left\langle {w^{2}_{\ell }\partial _{\beta }\Gamma _{A},\partial
_{\beta }f_{3}}\right\rangle \right\vert 
\\
\lesssim \sum_{\beta _{1}+\beta
_{2}\leq \beta }\int ~\frac{J^{1/4}(q)}{\pZ \qZ }
\left\{\left\vert \partial
_{\beta _{1}}f_{1}(\rP)\partial _{\beta _{2}}f_{2}(\rQ)|+|\partial _{\beta _{1}}f_{1}(p)\partial _{\beta
_{2}}f_{2}(q)|\right\}
|\partial _{\beta }f_{3}(p)\right\vert 
%d\omega dpdq 
\\
\lesssim |\partial _{\beta }f_{3}|_{2 }\sum_{\beta _{1}+\beta
_{2}\leq \beta }\int ~d\omega dqdp~\frac{J^{1/4}(q)}{\pZ \qZ }~\left\vert
(\partial _{\beta _{1}}f_{1})(\rP)(\partial _{\beta
_{2}}f_{2})(\rQ)\right\vert ^{2}
\\
+|\partial _{\beta }f_{3}|_{2 }\sum_{\beta _{1}+\beta _{2}\leq \beta
}|\partial _{\beta _{1}}f_{1}|_{2}|\partial _{\beta _{2}}f_{2}|_{2}.
\end{multline*}
We finally complete the proof by making pre-post collisional change of
variables $(p,q)\rightarrow (\rP,\rQ)$ from \eqref{PARTICLEjacobian}.
\end{proof}

\subsection{Center of Momentum Frame}

\label{sec:comV} In this section we prove estimates for the term $\Gamma _{A_{c}}$ from \eqref{splitG}.  We take $\beta$
momentum derivatives of $\Gamma _{A_{c}}$ to obtain 
\begin{gather*}
|\pa_{\be}\Gamma _{A_{c}}| 
\lesssim 
\sum
\int_{{\mathbb{R}}^{\dim }\times {\mathbb{S}}^{2}}
dqd\omega ~
\mathbf{1}_{|p|^{1/m}\gtrsim \qZ }  
|\partial _{\beta _{0}}v_{\o }|
\sqrt{J(q)}|(\partial _{\beta _{1}}f_{1})(\PrP)(\partial _{\beta
_{2}}f_{2})(\QrQ)\kappa _{\beta _{2}}^{\beta _{1}}| 
\\
+\sum
|\partial _{\beta_{1}}f_{1}(p)|\int_{{\mathbb{R}}^{\dim }\times {\mathbb{S}}^{2}} dqd\omega ~ 
\mathbf{1}_{|p|^{1/m}\gtrsim \qZ } ~|\partial _{\beta _{0}}v_{\o }|\sqrt{J(q)}\pa_{\be_2} f_{2}(q)|.
\end{gather*}
Here $\kappa _{\beta _{2}}^{\beta _{1}}$ is the collection of sums of products of momentum derivatives of $\PrP$ and $\QrQ$, from \eqref{pcM}, which result from the chain rule of differentiation.  Again the sum is over the multi-indices ${\beta _{0}+\beta_{1}+\beta _{2}\leq \beta }$.  
We then have

\begin{lemma}
\label{derivativeESTlem} Let $|p|^{\frac{1}{m}}\gtrsim \qZ$ with $m$ large, as in 
\eqref{ABsplitting}.  For some integer $n\geq 1$, which depends upon $\beta \neq 0$, we have the following estimates
\begin{equation}
\frac{\left\langle {p}\right\rangle |\partial _{\beta }v_{\o}|}{| v_{\o} |}+\left\vert \partial _{\beta }\PrP\right\vert +\left\vert
\partial _{\beta }\QrQ\right\vert \lesssim \left\langle {q}%
\right\rangle ^{n}.  \notag
\end{equation}
\end{lemma}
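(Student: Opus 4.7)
The plan is to follow the same strategy as Lemma \ref{lem:GSderivativeEST}: to verify that each derivative of $v_\o$, $\PrP$, $\QrQ$ is a rational function whose denominators are bounded below on $A_c$ by positive quantities that in fact grow polynomially with $|p|$, while the numerators are smooth polynomials in $(p,q,\omega,\pZ,\qZ,g,\sqrt{s})$. First I would record the safe lower bounds. Since $|p|^{1/m}\gtrsim \qZ\geq|q|$, we have $|p\pm q|\geq|p|/2$ and $\pZ\geq|p|$. The algebraic identity $(\pZ\qZ-p\cdot q)(\pZ\qZ+p\cdot q)=1+|p|^2+|q|^2+|p\times q|^2$ together with $\pZ\qZ+p\cdot q\lesssim|p|\qZ$ gives $\pZ\qZ-p\cdot q\gtrsim|p|/\qZ$, so
\[
g^2\gtrsim |p|/\qZ,\qquad s=g^2+4\gtrsim |p|/\qZ.
\]
Consequently, every denominator that arises from differentiating \eqref{pcM} and \eqref{moller} --- namely $\pZ$, $\sqrt{s}$, $g$, and $|p+q|^2$ --- is bounded below on $A_c$ and in fact grows polynomially in $|p|$.

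For $v_\o=g\sqrt{s}/(2\pZ\qZ)$, logarithmic differentiation gives $|\partial v_\o|/|v_\o|\lesssim |\partial g|/g+|\partial s|/s+1/\pZ$. Using $|\partial_{p_i}g^2|=|2(p_i\qZ/\pZ-q_i)|\lesssim\qZ$, each of these is bounded by $\qZ/g^2+1/|p|\lesssim(\qZ)^2/|p|$, and multiplying by $\ang{p}$ yields the $v_\o$ bound for $|\beta|=1$ with $n=2$. For higher derivatives, each additional $\partial_p$ acting on $g^{-k}$ produces a factor $\qZ/g^2\lesssim(\qZ)^2/|p|$, with analogous contributions from $s^{-k/2}$ and $(\pZ)^{-k}$; an induction on $|\beta|$ (or the multidimensional Fa\`a di Bruno formula already used in Lemma \ref{lem:GSderivativeEST}) then yields $|\partial_\beta v_\o|/|v_\o|\lesssim(\qZ)^{n(\beta)}/|p|$ for some integer $n(\beta)$.

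For $\PrP$ and $\QrQ$, I would differentiate \eqref{pcM} term by term. The piece $(p+q)/2$ has trivial derivatives; $(g/2)\omega$ contributes $|\partial g|\lesssim\qZ/g\lesssim(\qZ)^{3/2}/|p|^{1/2}$; and for the projective piece, writing $S=g(\rho-1)/|p+q|^2$ with $\rho=(\pZ+\qZ)/\sqrt{s}$ and using the crude bounds $g,\,\rho-1\lesssim\sqrt{|p|\qZ}$ and $|p+q|^{-2}\lesssim|p|^{-2}$, one checks $|S|\lesssim\qZ/|p|$ and $|\partial S|\lesssim(\qZ)^3/|p|^2$. Summing gives $|\partial\PrP|\lesssim\ang{q}^3$ when $|\beta|=1$, and the conservation identity $\PrP+\QrQ=p+q$ transfers the bound to $\QrQ$. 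Higher $|\beta|$ then follows by the same inductive scheme used for $v_\o$: each additional $\partial_p$ either multiplies by $\qZ$ (from differentiating $g^2$, $s$, $\rho$, or $\pZ$) or divides by one of the safe denominators $\pZ$, $g^2$, or $|p+q|^2$, so no irremediable $|p|$-growth ever appears.

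The main obstacle is the cancellation bookkeeping: because $\PrP$ itself is of size $|p|$, the bound $|\partial_\beta\PrP|\lesssim\ang{q}^n$ with no $\ang{p}$ factor can only come from the specific projective structure of \eqref{pcM} together with the $A_c$-lower bounds on $g$, $\sqrt{s}$, and $|p+q|$. The crude ratio-of-polynomials approach from Lemma \ref{lem:GSderivativeEST} succeeds precisely because on $A_c$ every denominator is $\gtrsim|p|^c$ for some $c>0$, so that the $|p|$-growth of numerator factors like $\rho-1$ is always dominated after enough derivatives have landed on the denominators.
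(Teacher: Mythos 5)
Your proposal is correct and follows essentially the paper's own route: the lower bound $g^{2}\gtrsim |p|/\left\langle q\right\rangle$ on $A_{c}$ (the paper's \eqref{boundSg}, which you rederive from the identity $(\pZ\qZ-p\cdot q)(\pZ\qZ+p\cdot q)=1+|p|^{2}+|q|^{2}+|p\times q|^{2}$ rather than citing Glassey--Strauss), the derivative bounds for $g$, $\sqrt{s}$ and their reciprocals (the content of Lemma \ref{newLEMgs}), and the cancellation of the $\left\langle p\right\rangle$-growth of $\tfrac{g}{2}(\rho-1)$ against the $|p+q|^{-|\alpha|}$ decay of derivatives of the projective factor are exactly the ingredients used in the paper's proof of Lemma \ref{derivativeESTlem}. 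Your higher-order step is a structural-induction sketch where the paper carries out explicit Leibniz expansions (with the case split $\beta-\alpha=0$ versus $|\beta-\alpha|>0$), but the mechanism is the same, and your use of $\PrP+\QrQ=p+q$ to transfer the bound to $\QrQ$ is a harmless shortcut.
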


To prove Lemma \ref{derivativeESTlem} we will use the following:  

\begin{lemma}\label{newLEMgs}
Let $|p|^{\frac{1}{m}}\gtrsim q^{0}$ with $m$ large.
% Recall $g=\sqrt{%2(p^{0}q^{0}-p\cdot q-1)\text{ }}$ and $s=g^{2}+4.$ 
Then for any $\beta \neq
0$, we have 
\begin{eqnarray*}
|\partial _{\beta }g| &\lesssim &\frac{\langle q\rangle ^{|\beta |}}{g},%
\text{ \ \ \ \ } \left|\partial _{\beta }\left( \frac{1}{g}\right) \right|
\lesssim 
\frac{\langle q\rangle ^{|\beta |}}{g^{3}}, 
\\
|\partial _{\beta }\sqrt{s}| &\lesssim &\frac{\langle q\rangle ^{|\beta |}}{g%
},\text{ \ \ \ \ }
\left|\partial _{\beta }\left( \frac{1}{\sqrt{s}}\right) \right|
\lesssim \frac{\langle q\rangle ^{|\beta |}}{g^{3}}.
\end{eqnarray*}
\end{lemma}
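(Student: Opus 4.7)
The plan is to reduce everything to the algebraic identity $g^2 = 2(\pZ\qZ - p\cdot q - 1)$ together with one a priori lower bound on $g$ on the region $A_c$. Differentiating $g$ or $\sqrt{s}$ directly is awkward because of the square root, but $g^2$ is polynomial-like in $\pZ = \sqrt{1+|p|^2}$ and its $p$-derivatives are uniformly controlled. Inducting through the relation $2g\,\partial_\beta g = \partial_\beta(g^2) - \text{lower order}$ then produces the bounds for $g$; the bounds for $1/g$, $\sqrt{s}$, and $1/\sqrt{s}$ come from Fa\`a di Bruno together with the observation $s = g^2+4$.

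\textbf{Coercivity of $g$.} First I would show $g \gtrsim 1$ on $A_c$. Rationalizing,
\[
\pZ\qZ - |p||q| = \frac{1+|p|^2+|q|^2}{\pZ\qZ+|p||q|} \gtrsim \frac{|p|}{\qZ} \qquad\text{for } |p| \geq 1,
\]
and since $\pZ\qZ - p\cdot q \geq \pZ\qZ - |p||q|$, this gives $g^2 \gtrsim |p|/\qZ - 1$. The hypothesis $|p| \geq (2\qZ)^m$ then forces $g^2 \gtrsim (\qZ)^{m-1}$, and in particular $g \gtrsim 1$ once $m$ is taken large enough. This quantitative coercivity is the only nontrivial ingredient; everything below is bookkeeping.

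\textbf{Induction on $|\beta|$.} Because $|\partial_\beta \pZ| \lesssim \langle p\rangle^{1-|\beta|} \leq 1$ for $|\beta|\geq 1$ and $\partial_\beta(p\cdot q) = 0$ for $|\beta|\geq 2$, a direct computation gives the crucial uniform bound
\[
|\partial_\beta(g^2)| \lesssim \langle q\rangle \qquad\text{for every } |\beta|\geq 1,
\]
with an implicit constant independent of $|\beta|$. The same estimate holds for $s = g^2+4$. Applying Leibniz to $g\cdot g = g^2$ produces
\[
2g\,\partial_\beta g = \partial_\beta(g^2) - \sum_{0<\gamma<\beta}\binom{\beta}{\gamma}(\partial_\gamma g)(\partial_{\beta-\gamma}g).
\]
Inducting on $|\beta|$, each product in the sum is $\lesssim \langle q\rangle^{|\beta|}/g^2$, so dividing by $2g$,
\[
|\partial_\beta g| \lesssim \frac{\langle q\rangle}{g} + \frac{\langle q\rangle^{|\beta|}}{g^3} \lesssim \frac{\langle q\rangle^{|\beta|}}{g},
\]
where the final inequality uses $g \gtrsim 1$ from the previous step to absorb $1/g^3$ into $1/g$. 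The identical induction with $\sqrt{s}$ in place of $g$ (using $\sqrt{s} \geq g$) yields $|\partial_\beta \sqrt{s}| \lesssim \langle q\rangle^{|\beta|}/\sqrt{s} \leq \langle q\rangle^{|\beta|}/g$.

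\textbf{Reciprocals and the main obstacle.} For $1/g$ and $1/\sqrt{s}$ I would use Fa\`a di Bruno applied to $x\mapsto 1/x$ composed with $g$ (respectively $\sqrt{s}$): every resulting term has the shape $(\partial_{\gamma_1}g)\cdots(\partial_{\gamma_k}g)/g^{k+1}$ with $\gamma_1+\cdots+\gamma_k = \beta$ and each $|\gamma_j|\geq 1$, and by the bound just proved is at most $\langle q\rangle^{|\beta|}/g^{2k+1} \lesssim \langle q\rangle^{|\beta|}/g^3$ after absorbing the extra powers of $1/g$ via $g \gtrsim 1$. The entire argument hinges on the coercivity of the second step; the reason $m$ must be taken large in the splitting \eqref{ABsplitting} is precisely to make $g \gtrsim 1$ available uniformly on $A_c$, so that the higher negative powers of $g$ generated at each inductive step never escape the target bound.
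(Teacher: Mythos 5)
Your proposal is correct, and it reaches the four bounds by a somewhat different route than the paper. The paper proves all four estimates simultaneously by induction on $|\beta|$: it computes the first-order derivatives explicitly (e.g.\ $\partial_{p_i}\relMOM=\frac{1}{\relMOM}\bigl(\frac{p_i}{\pZ}\qZ-q_i\bigr)$, $\partial_{p_i}\sqrt{s}=\frac{\relMOM\,\partial_{p_i}\relMOM}{\sqrt{s}}$, etc.), then Leibniz-expands these quotient expressions at higher order, feeding the lower-order bounds for $\relMOM$, $1/\relMOM$, $1/\sqrt{s}$ back into each other, and it imports the coercivity $\relMOM\gtrsim 1$ on the region from the Glassey--Strauss inequality $\frac{|p-q|}{\sqrt{\pZ\qZ}}\lesssim \relMOM\lesssim|p-q|$ (their Lemma 3.1), i.e.\ the paper's bound \eqref{boundSg}. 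You instead (i) prove $\relMOM\gtrsim 1$ on $A_c$ from scratch by rationalizing $\pZ\qZ-|p||q|$, and (ii) run the induction on the single identity $2\relMOM\,\partial_\beta \relMOM=\partial_\beta(\relMOM^2)-\sum_{0<\gamma<\beta}\binom{\beta}{\gamma}(\partial_\gamma \relMOM)(\partial_{\beta-\gamma}\relMOM)$, exploiting that $\relMOM^2=2(\pZ\qZ-p\cdot q-1)$ has all $p$-derivatives bounded by $\ang{q}$, with the reciprocals handled wholesale by Fa\`a di Bruno and the extra negative powers of $\relMOM$ absorbed via $\relMOM\gtrsim1$. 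Both arguments hinge on exactly the same two ingredients (coercivity of $\relMOM$ on the region and the $\ang{q}$ cost per differentiation), but your bookkeeping is leaner and self-contained, avoiding the coupled system of quotient expansions; the paper's explicit first-derivative formulas have the side benefit of being reused in its subsequent computations. The only cosmetic overstatement is the claim that the constant in $|\partial_\beta(\relMOM^2)|\lesssim\ang{q}$ is independent of $|\beta|$ (it need not be, because of the derivatives of $\pZ$), but this is immaterial since constants depending on $\beta$ are allowed.
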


\begin{proof}[Proof of Lemma \protect\ref{newLEMgs}]  
We shall use an induction over $|\beta |.$ If $|\beta |=1,$ we have 
\begin{eqnarray*}
|\partial _{p_{i}}g| &=&\left\vert \frac{1}{g}\left( \frac{p_{i}}{p^{0}}%
q^{0}-q_{i}\right) \right\vert \lesssim \frac{\ang{q}}{g}, \\
\left\vert \partial _{p_{i}}\left( \frac{1}{g}\right) \right\vert 
&=&\left\vert -\frac{\partial _{p_{i}}g}{g^{2}}\right\vert 
\lesssim \frac{\ang{q}}{g^{3}}, \\
|\partial _{p_{i}}\sqrt{s}| &=&\left\vert \frac{g\partial _{p_{i}}g}{\sqrt{s}%
}\right\vert \lesssim |\partial _{p_{i}}g|\lesssim \frac{\ang{q}}{g}, 
\\
\left\vert \partial _{p_{i}}\left( \frac{1}{\sqrt{s}}\right) \right\vert 
&=&
\left|\frac{g\partial _{p_{i}}g}{\sqrt{s^{3}}}\right|\lesssim 
\frac{|\partial_{p_{i}}g|}{g^{2}}\lesssim \frac{\ang{q}}{g^{3}}.
\end{eqnarray*}
Here we recall that $s=g^2+4$.
Assume that the estimates are valid for $\beta .$ Now for  $(|\beta |+1)-$th
order derivatives, from the induction hypothesis for $\frac{1}{g},$ we have 
\begin{eqnarray*}
|\partial _{\beta }\partial _{p_{i}}g| 
&\lesssim &
\sum_{\beta _{1}\leq \beta}\left\vert \partial _{\beta _{1}}\left( \frac{1}{g}\right)
\partial _{\beta -\beta _{1}}\left( \frac{p_{i}}{p^{0}}q^{0}-q_{i}\right)
\right\vert  \\
&\lesssim &\frac{\langle q\rangle }{g}+\sum_{0\neq \beta _{1}\leq \beta }%
\frac{\langle q\rangle ^{|\beta _{1}|}}{g^{3}}\langle q\rangle  \\
&\lesssim &\frac{\langle q\rangle ^{|\beta |+1}}{g}.
\end{eqnarray*}%
Here (and below) we use that $g \gtrsim 1$ on $|p|^{\frac{1}{m}}\gtrsim q^{0}$.  This follows from
\begin{equation}\label{boundSg}
1 \lesssim \sqrt{\frac{\left\langle {p}\right\rangle }{\left\langle {q}\right\rangle }}%
\lesssim g\lesssim \left\langle {p}\right\rangle,
\end{equation}
which itself is a consequence of
the inequality $\frac{|p-q|}{\sqrt{\pZ \qZ }%
}\lesssim \relMOM\lesssim |p-q|$ (see \cite[Lemma 3.1]{MR1211782}) on the region $|p|^{\frac{1}{m}}\gtrsim \qZ$ from %
\eqref{ABsplitting}.
For the next step, similarly note that 
\begin{eqnarray*}
\left\vert \partial _{\beta }\partial _{p_{i}}\left\{ \frac{1}{g}\right\}
\right\vert  
&=&\left\vert \partial _{\beta }\left\{ \frac{\partial _{p_{i}}g}{g^{2}}%
\right\} \right\vert  \\
&\lesssim &\sum_{\beta _{1}+\beta _{2}\leq \beta }\left\vert \partial
_{\beta -\beta _{1}-\beta _{2}}\left\{ \frac{1}{g}\right\} \partial _{\beta
_{1}}\left\{ \frac{1}{g}\right\} \partial _{\beta _{2}}\left\{ \partial
_{p_{i}}g\right\} \right\vert  \\
%&&+\sum_{\beta _{1}+\beta _{2}\leq \beta }\left\vert \partial _{\beta -\beta
%_{1}-\beta _{2}}\left\{ \frac{1}{g}\right\} \partial _{\beta _{1}}\left\{ 
%\frac{1}{g}\right\} \partial _{\beta _{2}}\left\{ \partial _{p_{i}}g\right\}
%\right\vert  \\
&\lesssim &
\sum_{\beta _{1}+\beta _{2}\leq \beta}
\frac{\langle
q\rangle ^{|\beta |-|\beta _{1}|-|\beta _{2}|}}{g}\frac{\langle q\rangle^{|\beta _{1}|}}{g}\frac{\langle q\rangle ^{|\beta _{2}|+1}}{g}
\\
&\lesssim &\frac{\langle q\rangle ^{|\beta |+1}}{g^{3}}.
\end{eqnarray*}
This last estimate in particular holds because we have, by the induction assumption and a direct calculation, for any multi-index $0 \le \alpha \le \beta$ that
$$
\left|\partial _{\alpha }\left\{ \frac{1}{g}\right\} \right|
\lesssim \frac{\langle
q\rangle^{|\alpha|} }{g}
$$
Here again we used that $g \gtrsim 1$ on $|p|^{\frac{1}{m}}\gtrsim q^{0}$.

Simialrly for any multi-index satisfying $0 \le \alpha \le \beta$ 
we have that
$$
\left\vert \partial _{\alpha }\left\{ \frac{1}{\sqrt{s}}\right\} \right\vert
\lesssim \frac{\langle q\rangle ^{|\alpha |}}{g},
\quad
\left\vert \partial _{\alpha } g \right\vert
\lesssim 
\max\left\{ g, \frac{\langle q\rangle ^{|\alpha |}}{g} \right\}.
$$
With that, we again use the induction hypothesis to obtain
\begin{eqnarray*}
|\partial _{\beta }\partial _{p_{i}}\sqrt{s}| &=&\left\vert \partial _{\beta
}\left\{ \frac{g\partial _{p_{i}}g}{\sqrt{s}}\right\} \right\vert  
\\
&\lesssim &\sum_{\beta _{1}+\beta _{2}\le \beta }\left\vert \partial
_{\beta -\beta _{1}-\beta _{2}}g\partial _{\beta _{1}}\left\{ \frac{1}{\sqrt{%
s}}\right\} \partial _{\beta _{2}}\partial _{p_{i}}g\right\vert  
\\
&\lesssim &
\sum_{\beta _{1}+\beta _{2}\le \beta } 
\max\left\{ g, \frac{\langle q\rangle ^{|\beta |-|\beta _{1}|-|\beta _{2}|}}{g} \right\}
\frac{\langle q\rangle^{|\beta _{1}|}}{g}
\frac{\langle q\rangle ^{|\beta _{2}|+1}}{g}
\\
&\lesssim &\frac{\langle q\rangle ^{|\beta |+1}}{g}.
\end{eqnarray*}%
For the last case, we do a similar calculation as
\begin{multline*}
\left\vert \partial _{\beta }\partial _{p_{i}}\left\{ \frac{1}{\sqrt{s}}%
\right\} \right\vert  
=
\left|\partial _{\beta }\left\{ \frac{g\partial _{p_{i}}g}{\sqrt{s^{3}}}\right\} \right| 
\\
\lesssim \sum_{\beta _{1}+\beta _{2}+\beta _{3}+\beta _{4}\leq \beta
}
\left| \partial _{\beta _{1}}\left\{ \frac{1}{\sqrt{s}}\right\} \partial _{\beta
_{2}}\left\{ \frac{1}{\sqrt{s}}\right\} \partial _{\beta _{3}}\left\{ \frac{1%
}{\sqrt{s}}\right\} \partial _{\beta _{4}}g\partial _{\beta -\beta
_{1}-\beta _{2}-\beta _{3}-\beta _{4}}\partial _{p_{i}}g 
\right|
\\
\lesssim 
\sum_{\beta _{1}+\beta _{2}+\beta _{3}+\beta _{4}\leq \beta }%
\frac{\langle q\rangle ^{|\beta _{1}|}}{g}
\frac{\langle q\rangle ^{|\beta_{2}|}}{g}
\frac{\langle q\rangle ^{|\beta _{3}|}}{g}
\max\left\{ g,\frac{\langle q\rangle^{|\beta _{4}|}}{g}\right\}
\frac{\langle q\rangle ^{|\beta |+1-|\beta _{1}|-|\beta_{2}|-|\beta _{3}|-|\beta _{4}|}}{g} 
\\
\lesssim \frac{\langle q\rangle ^{|\beta |+1}}{g^{3}}.
\end{multline*}
In summary, the desired estimates follow by via the induction hypothesis.
\end{proof}

We now use the estimates from Lemma \ref{newLEMgs} to prove Lemma \protect\ref{derivativeESTlem}. 

\begin{proof}[Proof of Lemma \protect\ref{derivativeESTlem}]  
We first show the decay of $\partial _{\beta}v_{\o}$. 
Consider $\relMOM$, and recall \eqref{boundSg}.   Next recall $v_{\o}=\frac{1}{2}\frac{g\sqrt{s}}{\pZ \qZ }$ from \eqref{moller}. Then using Lemma \ref{newLEMgs} we have
\begin{eqnarray*}
|\partial _{\beta }v_{\o}| &=& \frac{1}{2} 
\left|\partial _{\beta }\left\{ \frac{g\sqrt{s}}{%
\pZ \qZ }\right\} \right| 
\lesssim 
\sum_{\beta_1+\beta_2 \le \beta} 
\left|\partial _{\beta _{1}}g\partial _{\beta _{2}}\sqrt{s}%
\partial _{\beta - \beta_1 -\beta _{2}}\left(\frac{1}{\pZ \qZ }\right) \right| 
\\
&\lesssim &
\max\left\{ g, \frac{\langle q\rangle ^{|\beta_1 |}}{g} \right\}
\frac{\left\langle {q}\right\rangle ^{|\beta_2| }}{g}
\frac{1}{\pZ \qZ }%
\lesssim \frac{\left\langle {q}\right\rangle ^{|\beta|}}{g^{2}}\frac{g\sqrt{s}}{%
\pZ \qZ }\lesssim \frac{\left\langle {q}\right\rangle ^{|\beta|}}{\left\langle {p%
}\right\rangle } \left| v_{\o} \right|.
\end{eqnarray*}
This completes the estimate for a high-order derivative of $v_{\o}$.

To show $\left\vert \partial _{\beta }\PrP \right\vert +\left\vert \partial _{\beta }\QrQ\right\vert
\lesssim \left\langle {q}\right\rangle ^{n},$ we note from 
\eqref{pcM} for $|\beta |>0$ that
\begin{equation*}
|\partial _{\beta }p_{i}^{\prime \prime }|\lesssim 
\frac{\delta_{ij}}{2}\mathbf{1}_{\beta =e_{j}} 
+
\left\vert \frac{\partial _{\beta }g}{2}\omega
_{i}\right\vert 
+
\sum_{\alpha\le \be} \left\vert \partial _{\beta -\alpha }\left( \frac{g}{2}%
(\rho -1)\right) \partial _{\alpha }\left( (p_{i}+q_{i})\frac{(p+q)\cdot
\omega }{|p+q|^{2}}\right) \right\vert .
\end{equation*}%
We estimate each of these terms individually. The first term is
trivially bounded.   By Lemma \ref{newLEMgs}, for the second term on the right side of $\partial _{\beta }p_{i}^{\prime \prime }$ we use 
$|\partial _{\beta }g| \lesssim \frac{\langle q\rangle ^{|\beta |}}{g} \lesssim \langle q\rangle ^{|\beta |}$ on 
$|p|^{\frac{1}{m}}\gtrsim \qZ$. 
For the third and last term on the right side of $\partial _{\beta }p_{i}^{\prime \prime }$ we notice 
\begin{equation*}
\left\vert \partial _{\alpha }\left( (p_{i}+q_{i})\frac{(p+q)\cdot \omega }{%
|p+q|^{2}}\right) \right\vert \lesssim |p+q|^{-|\alpha |}\lesssim
\left\langle {p}\right\rangle ^{-|\alpha |}.
\end{equation*}%
The first inequality holds generally; the second inequality holds on 
$|p|^{\frac{1}{m}}\gtrsim \qZ$ from \eqref{ABsplitting}. Now for the term 
$\frac{g}{2}(\rho -1)$ we notice that $\left\vert \frac{g}{2}(\rho -1)\right\vert
\lesssim \pZ +\qZ \lesssim \left\langle {p}\right\rangle $ on $|p|^{\frac{1%
}{m}}\gtrsim \qZ$. The first inequality in the previous chain holds because of $%
\frac{g}{\sqrt{s}}\lesssim 1$ and the definition of $\rho $ from \eqref{pcM}. We conclude that if $\beta -\alpha =0$
then the third term on the right side of $\partial _{\beta }p_{i}^{\prime
\prime }$ is bounded as in Lemma \ref{derivativeESTlem} since in this case $%
|\alpha |=|\beta |>0$.

It remains to estimate the last term on the right side of $\partial _{\beta
}p_{i}^{\prime }$ when $|\beta -\alpha |>0$. To this end, notice that 
$
\frac{g}{2}(\rho -1)=\frac{g}{2}\left( \frac{\pZ +\qZ }{\sqrt{s}}-1\right).
$
Therefore by Lemma \ref{newLEMgs}
\begin{multline*}
\left| \partial _{\beta}\left( \frac{g}{2}(\rho -1)\right) \right|
\lesssim
\left| 
\left( \partial _{\beta}g\right) \left( \rho -1\right) \right|
+
\sum_{\beta_1 \ne \beta, \beta_1 + \beta_2 < \beta} 
\left| 
\left(\partial _{\beta_1} g\right) \partial _{\beta_2}\left( \frac{1}{\sqrt{s}} \right) 
\partial _{\beta - \beta_1 - \beta_2} \pZ \right|
\\
+
\sum_{\beta_1 \ne \beta, \beta_1 + \beta_2 = \beta} 
\left| 
\left(\partial _{\beta_1} g\right) \partial _{\beta_2}\left( \frac{1}{\sqrt{s}} \right) 
\partial _{\beta - \beta_1 - \beta_2} \pZ \right|
\\
\lesssim
\frac{\ang{q}^{|\beta|}}{g} \left| \frac{\pZ + \qZ -\sqrt{s}}{\sqrt{s}} \right|
+
\sum_{\beta_1 \ne \beta, \beta_1 + \beta_2 < \beta} 
\max\left\{ g, \frac{\ang{q}^{|\beta_1|}}{g}\right\} 
\frac{\ang{q}^{|\beta_2|}}{g}
\\
+
\sum_{\beta_1 \ne \beta, \beta_1 + \beta_2 = \beta} 
\max\left\{ g, \frac{\ang{q}^{|\beta_1|}}{g}\right\} 
\frac{\ang{q}^{|\beta_2|}}{g^3}  p^0
\\
\lesssim
\ang{q}^{|\beta|+2}.
\end{multline*}%
This use several previous estimates, and holds for a general multi-index $\beta \ne 0$.  

Collecting all of these completes the estimate for $\partial _{\beta}p_i^{\prime\prime}$ in Lemma \ref{derivativeESTlem}. Notice that the estimate for $\left| \partial _{\beta } q_i^{\prime\prime}\right|$ is exactly the same.
\end{proof}

%\Red{ADD COMMENTARY HERE.}

\begin{lemma}
\label{nonlinGest:B} Fix $|\beta |\ge 0$. Then we have the uniform estimate
\begin{multline}
| \partial _{\beta }\Gamma _{A_{c}}| 
%\\
\lesssim 
\sum 
\int_{{\mathbb{R}}^{\dim }\times {\mathbb{S}}^{2}}  dqd\omega ~
\mathbf{1}_{|p|^{1/m}\gtrsim
\qZ }
\frac{ v_{\o} ~ J^{1/4}(q)}{|p|^{\min \{1,|\beta_{0}|\}}}
|(\partial _{\beta _{1}}f_{1})(\PrP)(\partial _{\beta _{2}}f_{2})(\QrQ)| \\
+
\sum
|\partial _{\beta_{1}}f_{1}(p)|
\int_{{\mathbb{R}}^{\dim }\times {\mathbb{S}}^{2}} dqd\omega ~ \mathbf{1}_{|p|^{1/m}\gtrsim \qZ }
\frac{ v_{\o} ~ J^{1/4}(q)}{|p|^{\min \{1,|\beta_{0}|\}}}
| \partial _{\beta _{2}} f_{2}(q)|.
\label{estAc}
\end{multline}
Above the sum is over multi-indices ${\beta _{0}+\beta _{1}+\beta _{2}\leq \beta }$.  

Moreover, for $\ell \geq 0$, we obtain
\begin{equation}
\left\vert \left\langle {w^{2}_{\ell }\partial _{\beta }\Gamma
_{A_{c}}(f_{1},f_{2}),\partial _{\beta }f_{3}}\right\rangle \right\vert
\lesssim |\partial _{\beta }f_{3}|_{2,\ell }\sum_{\beta _{1}+\beta _{2}\leq
\beta }|\partial _{\beta _{1}}f_{1}|_{2,\ell }|\partial _{\beta
_{2}}f_{2}|_{2,\ell }.
\label{nonLest}
\end{equation}
\end{lemma}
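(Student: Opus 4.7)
My plan is to establish the pointwise bound \eqref{estAc} first by direct differentiation, and then to deduce the weighted $L^2$ bound \eqref{nonLest} via Cauchy-Schwarz together with the pre-post collisional change of variables \eqref{changeCM}.

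For \eqref{estAc}, I expand $\partial_\beta \Gamma_{A_c}$ by the Leibniz rule. The derivatives either fall on the kernel $v_\o\sqrt{J(q)}\chi_{A_c}(p,q)$ (producing index $\beta_0$) or on the post-collisional compositions $f_1(\PrP)$, $f_2(\QrQ)$ (producing $\beta_1, \beta_2$); in the latter case the multivariate Fa\`a di Bruno formula generates the coefficient $\kappa_{\beta_2}^{\beta_1}$, a sum of products of momentum derivatives of $\PrP$ and $\QrQ$. By Lemma \ref{derivativeESTlem}, $|\kappa_{\beta_2}^{\beta_1}|\lesssim \ang{q}^n$ and $|\partial_{\beta_0}v_\o|\lesssim v_\o\ang{q}^n/|p|^{\min\{1,|\beta_0|\}}$ for some polynomial power $n$ depending only on $\ell$ and $|\beta|$. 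The factor $\sqrt{J(q)}$ is independent of $p$, and derivatives of the smooth cutoff $\chi_{A_c}(p,q)$ are uniformly bounded on the relevant support $\{|p|\ge 1\}$. Writing $\sqrt{J(q)}=J^{1/4}(q)\cdot J^{1/4}(q)$ and absorbing the polynomial $\ang{q}^n$ factors into one of the $J^{1/4}(q)$'s completes \eqref{estAc}.

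For \eqref{nonLest}, I handle the ``gain'' contribution of \eqref{estAc}. Pair $\partial_\beta \Gamma_{A_c}$ with $w_\ell^2(p)\partial_\beta f_3(p)$ and apply Cauchy-Schwarz in $(q,\omega)$ with splitting measure $v_\o J^{1/4}(q)$; the normalizing integral $\int v_\o J^{1/4}(q)\,d\omega dq$ is uniformly bounded in $p$. Cauchy-Schwarz in $p$ then factors out $|\partial_\beta f_3|_{2,\ell}$, leaving
\begin{equation*}
\int dp\,dq\,d\omega\, v_\o\, J^{1/4}(q)\, w_\ell^2(p)\, |\partial_{\beta_1} f_1(\PrP)|^2\, |\partial_{\beta_2} f_2(\QrQ)|^2
\end{equation*}
to be bounded. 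The crucial step is to distribute the weight: by collisional energy conservation $\pZ+\qZ=(\PrP)^0+(\QrQ)^0$, and since all of these quantities are $\ge 1$, one has $w_\ell(p)\lesssim w_\ell(\PrP)\,w_\ell(\QrQ)$ for $\ell\ge 0$. Applying the pre-post collisional change of variables \eqref{changeCM} then swaps $(p,q)\leftrightarrow(\PrP,\QrQ)$, and since $v_\o\lesssim 1$ and $J^{1/4}(q)w_\ell^2(q)\lesssim 1$, the integrand factorizes to deliver $|\partial_{\beta_1} f_1|_{2,\ell}^2|\partial_{\beta_2} f_2|_{2,\ell}^2$. The ``loss'' contribution, whose $f_1$ and $f_2$ already sit on pre-collisional variables, follows from a direct Cauchy-Schwarz in $q$ and the same weight absorption.

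I expect the main obstacle to be the careful interplay between the momentum weight $w_\ell^2(p)$ and the change of variables \eqref{changeCM}: without the subadditivity $w_\ell(p)\lesssim w_\ell(\PrP)\,w_\ell(\QrQ)$ afforded by energy conservation, the swap would leave the weight stranded on the wrong variable. A secondary subtlety is verifying that all the polynomial $\ang{q}^n$ factors generated via Lemma \ref{derivativeESTlem} (with $n$ depending on $\ell$ and $|\beta|$) are swallowed by the single $J^{1/4}(q)$ that we set aside; this is immediate since Maxwellian decay beats any polynomial.
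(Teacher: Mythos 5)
Your proposal is correct and follows essentially the same route as the paper: the pointwise bound \eqref{estAc} comes from Lemma \ref{derivativeESTlem} with the polynomial $\ang{q}^n$ factors absorbed by Maxwellian decay, and \eqref{nonLest} comes from Cauchy-Schwarz, the weight inequality $w_\ell(p)\lesssim w_\ell(\PrP)w_\ell(\QrQ)$ (which the paper cites from Glassey--Strauss and you rederive from energy conservation), and the pre-post collisional change of variables \eqref{changeCM}. The only cosmetic difference is bookkeeping of where $J^{1/4}$ sits after the swap (it becomes $J^{1/4}(\QrQ)\le 1$, harmless either way), so no gap.
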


%\Red{ADD COMMENTARY HERE.}

\begin{proof}
First \eqref{estAc} follows directly from the previous Lemma \ref{derivativeESTlem} and the fact that  $J^{-1/4}(q)\gtrsim \ang{q}^{n}$ for any $n>0$. We then have
the upper bound of
\begin{gather*}
\left\vert \ang{ {w^{2}_{\ell }\partial _{\beta}\Gamma }_{A_{c}}{,\partial _{\beta }f_{3}} } \right\vert 
\lesssim 
\sum\int  %dqdpd\omega 
w^{2}_{\ell }(p)   v_{\o}J^{1/4}(q)~\left\vert (\partial _{\beta _{1}}f_{1})(\PrP)(\partial _{\beta _{2}}f_{2})(\QrQ)\partial _{\beta
}f_{3}(p)\right\vert 
\\
+\sum
\int dqdpd\omega ~ w^{2}_{\ell }(p)  v_{\o}J^{1/4}(q) ~ |\partial_{\be_2} f_{2}(q)| ~ |\partial _{\beta _{1}}f_{1}(p)\partial _{\beta
}f_{3}(p)|.
\end{gather*}
Above the sum is over multi-indices ${\beta _{1}+\beta _{2}\leq \beta }$.
The second term above clearly has the desired upper bound in \eqref{nonLest} using Cauchy-Schwartz.  

For the first ``gain term'',
notice from \cite[Lemma 2.2]{MR1211782} that we have the estimate 
$w_{\ell}(p)\leq w_{\ell }(\PrP)w_{\ell }(\QrQ)$.   We remark that the estimate \cite[Lemma 2.2]{MR1211782} is true for any variables satisfying the conservation laws \eqref{collisionalCONSERVATION}.
Now using Cauchy-Schwartz we obtain the upper bound (using also $v_{\o}\leq 4$) 
\begin{multline*}
|\partial _{\beta }f_{3}|_{2,\ell }\left\{ \sum
\int ~v_{\o}~w^{2}_{\ell }(\PrP)w^{2}_{\ell}(\QrQ)\left\vert \partial _{\beta _{1}}f_{1}(\PrP)\partial _{\beta
_{2}}f_{2}(\QrQ)\right\vert ^{2}dpdq d\omega\right\} ^{1/2} 
\\
\lesssim 
|\partial _{\beta }f_{3}|_{2,\ell }\left\{ \sum
\int ~v_{\o}~w^{2}_{\ell}(\PrP)w^{2}_{\ell}(\QrQ)\left\vert \partial _{\beta _{1}}f_{1}(\PrP)\partial
_{\beta _{2}}f_{2}(\QrQ)\right\vert ^{2}d\PrP d\QrQ d\omega\right\} ^{1/2}.
\end{multline*}
Once again the pre-post change of variables from \eqref{changeCM}
establishes Lemma \ref{nonlinGest:B}.  
\end{proof}

%\Red{ADD DISCUSSION HERE?}

\section{The linear estimates}\label{sec:linearE}

We will use the estimates proven in the previous section for $\Gamma$ from \eqref{gamma0} to prove the linear estimates in this section. 
Recalling \eqref{L} and \eqref{gamma0} we write
\begin{equation}
L(h)=[L_+(h), L_-(h)], \quad L(h) = \nu(p) h - K(h).
\notag
\end{equation}
Here we recall that $h=[h_+, h_-]$.  From \eqref{rCOL}, \eqref{gamma0} and \eqref{collisionGS} we can define
\begin{equation}
\notag
\begin{split}
\Gamma^{gain}(f_1, f_2) \eqdef &
\int_{{\mathbb{R}}^{\dim}\times {\mathbb{S}}^{2}}  d\omega dq ~ \frac{s B(p,q,\omega)}{\pZ \qZ}
\sqrt{J(q)}
f_1(\rP)f_2(\rQ),  
\\
\Gamma^{loss}(f_1, f_2) \eqdef &
\int_{{\mathbb{R}}^{\dim}\times {\mathbb{S}}^{2}}  d\omega dq ~ \frac{s B(p,q,\omega)}{\pZ \qZ}
\sqrt{J(q)}
f_1(p)f_2(q).
\end{split}
\end{equation}
Above $f_1$ and $f_2$ are scalar functions.  
Then following \eqref{gamma0} we can write
\begin{equation}
\nu (p)
\eqdef
2\Gamma^{loss}\left(1, \sqrt{J}\right)
=
2\int_{{\mathbb{R}}^{\dim }\times {\mathbb{S}}^{2}} d\omega dq  ~ \frac{sB(p,q,\omega )}{\pZ \qZ }~J(q).
\label{nuDEF}
\end{equation}%
With these developments the operator $K(h) = [K_+(h), K_-(h)]$ can be expressed as
\begin{equation}
K_\pm(h)\eqdef 
\Gamma^{gain}(h_{\pm}, \sqrt{J}) 
+
\Gamma^{gain}(h_{\pm}, \sqrt{J})
+\Gamma_{\pm}\left( [\sqrt{J},\sqrt{J}], h\right).
\label{kOPdef}
\end{equation}
Then from \cite[Lemma 3.1]{strainSOFT} we clearly have that $\nu (p)\approx C_{\sigma}$ for  $C_{\sigma }>0$.   Furthermore

\begin{lemma}
\label{CCestimate} Let $|\beta |>0$, then $\left\vert \partial _{\beta }\nu
(p)\right\vert \leq C\left\langle {p}\right\rangle ^{-1}$.
\end{lemma}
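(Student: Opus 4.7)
The plan is to use the identity
\[
\nu(p) = 8\pi\int_{\mathbb{R}^{\dim}} dq~v_{\o}(p,q)~J(q),
\]
obtained from \eqref{changeCM} with a constant test function (so the $\omega$-integration is harmless), and to exploit the pre-existing factor of $1/p^0$ inside $v_{\o}$ to extract a $\langle p\rangle^{-1}$ upon any positive-order $p$-differentiation. The main subtlety is that $\nu(p)$ itself is only approximately constant (\cite[Lemma 3.1]{strainSOFT}), so one must verify that differentiation genuinely gains a power of $\langle p\rangle$.

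I would split $\nu = \nu_A+\nu_{A_c}$ using the cutoffs $\chi_A,\chi_{A_c}$ introduced in Section \ref{sec:gsA}. On the region $A_c$, where $|p|^{1/m}\gtrsim q^0$, Lemma \ref{derivativeESTlem} applies directly to $v_{\o}$ and gives
\[
|\partial_\beta v_{\o}|\lesssim \frac{\langle q\rangle^n v_{\o}}{\langle p\rangle}\lesssim \frac{\langle q\rangle^n}{\langle p\rangle}
\]
since $v_{\o}\leq 1$. Derivatives falling on $\chi_{A_c}$ are supported either where $\langle p\rangle\lesssim 1$ (so $\langle p\rangle^{-1}\sim 1$) or where $\langle q\rangle\gtrsim \langle p\rangle^{1/m}$, and in both cases the Gaussian factor $J(q)$ easily controls the contribution. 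Integration then yields $|\partial_\beta \nu_{A_c}|\lesssim \langle p\rangle^{-1}$.

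On the region $A$, the center-of-mass expression for $v_{\o}$ becomes singular near $p=q$ (where $g\to 0$), so Lemma \ref{derivativeESTlem} is unavailable. Instead I would return to the Glassey-Strauss form of the kernel and apply the change of variables $u=p^0 q-q^0 p$ from \eqref{uCHANGE}, writing $\nu_A$ analogously to \eqref{gainTchange}. After this substitution all denominators are bounded below uniformly---this is the key computation in the proof of Lemma \ref{lem:GSderivativeEST}---so differentiation in $p$ produces only polynomial growth in $(|p|,|q|)$, while the original factor $1/p^0$ is preserved because derivatives of $1/(p^0)^k$ never decrease the power of $p^0$. Reversing the substitution $u\mapsto q$ and invoking $|p|\lesssim (q^0)^m$ on $A$, the polynomial factor $\mathrm{poly}(|p|,|q|)$ is absorbed by $J^{1/2}(q)$, and the remaining integral is bounded by a constant, yielding $|\partial_\beta \nu_A|\lesssim \langle p\rangle^{-1}$.

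The hard part is precisely the desingularisation near $p\approx q$ inside $A$: the center-of-mass variables cannot be used there, and only the Glassey-Strauss $u$-substitution smooths out the kernel. Once this is in place, the rest of the argument is a bookkeeping exercise tracking the $p^0$ factor through the derivatives.
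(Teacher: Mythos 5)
Your argument is correct and is essentially the paper's proof: the paper simply applies its already-established bounds \eqref{gammaA} and \eqref{estAc} to the loss terms with $f_{1}\equiv 1$, $f_{2}=\sqrt{J}$, which is exactly your $A$/$A_{c}$ splitting with the Glassey--Strauss $u$-substitution (Lemma \ref{lem:GSderivativeEST}) on $A$ and the $\partial_{\beta}v_{\o}$ bound of Lemma \ref{derivativeESTlem} on $A_{c}$, so you have reproduced the same mechanism by hand. The only quibble is a mis-citation: the identity $\nu(p)=8\pi\int v_{\o}J(q)\,dq$ comes from the frame-equivalence identity preceding \eqref{moller}, not from \eqref{changeCM}.
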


\begin{proof}
As in \eqref{nuDEF}, we apply \eqref{gammaA} and \eqref{estAc} with $f_{1}\equiv 1$ in the loss
terms.  
\end{proof}

\begin{proposition}
\label{Aestimate} Let $|\beta |>0$ and fix $\ell \geq 0$. For any small $\eta >0,$ there exists a large $R=R(\eta )>0$ and $C=C(\eta )>0$ such that 
\begin{equation*}
\langle w^{2}_{\ell}\partial _{\beta }\{\nu (p)h\},\partial _{\beta }h\rangle
\geq 
|\partial _{\beta }h|_{\nu, \ell}^{2}
-
\eta \sum_{|\alpha |\leq |\beta |}|\partial _{\alpha }h|_{\nu, \ell}^{2}
-
C_{\eta }|\mathbf{1}_{\leq R}h|_{2}^{2}.
\end{equation*}
Here $\mathbf{1}_{\leq R}(p)$ is the indicator function of the ball of radius $R$ centered at the origin.
\end{proposition}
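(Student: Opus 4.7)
The plan is to apply the Leibniz rule and then extract the main term; the rest will be error terms which can be absorbed using the decay $|\partial_{\beta_1}\nu(p)|\lesssim \langle p\rangle^{-1}$ of Lemma \ref{CCestimate}. Write
\begin{equation*}
\partial_\beta\bigl(\nu(p)h\bigr) \;=\; \nu(p)\,\partial_\beta h \;+\; \sum_{0<\beta_1\le\beta}\binom{\beta}{\beta_1}(\partial_{\beta_1}\nu)\,\partial_{\beta-\beta_1}h,
\end{equation*}
so that upon pairing against $w_\ell^2\,\partial_\beta h$, the main contribution
$\langle w_\ell^2\,\nu\,\partial_\beta h,\partial_\beta h\rangle = |\partial_\beta h|_{\nu,\ell}^2$
appears exactly with the right sign. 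It remains to bound each cross term
$E_{\beta_1}\eqdef\langle w_\ell^2(\partial_{\beta_1}\nu)\,\partial_{\beta-\beta_1}h,\partial_\beta h\rangle$ with $|\beta_1|>0$
by $\eta\sum_{|\alpha|\le|\beta|}|\partial_\alpha h|_{\nu,\ell}^2 + C_\eta|\mathbf{1}_{\le R}h|_2^2$.

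Next, I would split the momentum integration at a threshold $|p|=R$, to be chosen large in terms of $\eta$. On the outer region $|p|>R$, Lemma \ref{CCestimate} gives the pointwise bound $|\partial_{\beta_1}\nu|\le C\langle p\rangle^{-1}\le CR^{-1}$, so by Cauchy-Schwarz and Young's inequality together with $\nu\approx 1$,
\begin{equation*}
\Bigl|\int_{|p|>R} w_\ell^2(\partial_{\beta_1}\nu)\,(\partial_{\beta-\beta_1}h)\,\partial_\beta h\,dp\Bigr|
\;\lesssim\; \frac{1}{R}\,\bigl(|\partial_{\beta-\beta_1}h|_{\nu,\ell}^2 + |\partial_\beta h|_{\nu,\ell}^2\bigr),
\end{equation*}
which is controlled by $\eta\sum_{|\alpha|\le|\beta|}|\partial_\alpha h|_{\nu,\ell}^2$ once $R=R(\eta)$ is large. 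On the bounded region $|p|\le R$, the weight $w_\ell$ is bounded by a constant $C_R=\langle R\rangle^\ell$ and $|\partial_{\beta_1}\nu|\le C$, so Cauchy-Schwarz followed by Young yields
\begin{equation*}
\Bigl|\int_{|p|\le R} w_\ell^2(\partial_{\beta_1}\nu)\,(\partial_{\beta-\beta_1}h)\,\partial_\beta h\,dp\Bigr|
\;\le\; \eta\,|\partial_\beta h|_{\nu,\ell}^2 + C_{\eta,R}\,|\mathbf{1}_{\le R}\partial_{\beta-\beta_1}h|_2^2.
\end{equation*}

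When $\beta_1=\beta$, so that $\partial_{\beta-\beta_1}h=h$, the last term already has the desired form $C_{\eta,R}|\mathbf{1}_{\le R}h|_2^2$, and we are done. The hard part, and the main obstacle, is the case $0<|\beta-\beta_1|<|\beta|$, where one must remove the momentum derivatives from $|\mathbf{1}_{\le R}\partial_{\beta-\beta_1}h|_2^2$. For this I would invoke an Ehrling-type interpolation on the bounded ball $|p|\le R$ (which is compactly embedded in $|p|\le 2R$, say): for any $\delta>0$,
\begin{equation*}
|\mathbf{1}_{\le R}\,\partial_{\beta-\beta_1}h|_2^2
\;\le\; \delta\!\!\sum_{|\alpha|=|\beta|}|\partial_\alpha h|_{\nu,\ell}^2 \;+\; C_{\delta,R}\,|\mathbf{1}_{\le 2R}h|_2^2.
\end{equation*}
Choosing $\delta=\delta(\eta,R)$ so small that $C_{\eta,R}\,\delta\le\eta$ pushes the first summand into the allowed $\eta$-sum on the right-hand side, and the second summand into the $|\mathbf{1}_{\le R'}h|_2^2$ term (with $R'=2R$, which we relabel as $R$ after enlarging). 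Summing over $0<\beta_1\le\beta$ and combining with the outer-region bound completes the estimate.
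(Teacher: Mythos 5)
Your proposal is correct and follows essentially the same route as the paper's proof: Leibniz expansion to isolate the main term $|\partial_\beta h|_{\nu,\ell}^2$, the decay $|\partial_{\beta_1}\nu|\lesssim\langle p\rangle^{-1}$ from Lemma \ref{CCestimate} with Cauchy--Schwarz on $|p|>R$, and a compact (Ehrling-type) Sobolev interpolation on the bounded region $|p|\le R$ to trade intermediate derivatives for top-order derivatives with small constant plus $C_\eta|\mathbf{1}_{\le R}h|_2^2$. The only cosmetic difference is your use of a ball of radius $2R$ followed by relabeling, which is harmless.
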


\begin{proof}
We will prove the desired coercivity estimate for a real valued function $h$
to simplify notation; the result follows trivially for a vector valued
function $h=[h_{+},h_{-}]$. We expand out the inner product as 
\begin{gather}
\langle w^{2}_{\ell}\partial _{\beta }\{\nu (p)h\},\partial _{\beta }h\rangle
=
\int_{{\mathbb{R}}^{\dim}}w^{2}_{\ell }~\nu 
~|\partial _{\beta}h|^{2}
+
\sum_{0<\beta _{1}\leq \beta }C_{\beta _{1}}^{\beta }\int_{{\mathbb{R}}^{\dim }}
w^{2}_{\ell}~\partial _{\beta _{1}}\nu ~\partial _{\beta -\beta
_{1}}h~\partial _{\beta }h  \notag 
\\
=
|\partial _{\beta }h|_{\nu ,\ell }^{2}
+
\sum_{0<\beta _{1}\leq \beta}C_{\beta _{1}}^{\beta }
\int_{{\mathbb{R}}^{\dim }}~dp~w^{2}_{\ell}(p)~\partial _{\beta _{1}}\nu (p)~\partial _{\beta -\beta
_{1}}h(p)~\partial _{\beta }h(p).  \notag
\end{gather}
Here $C_{\beta _{1}}^{\beta }$ is the constant which results from the high order differentiation.
Since $|\beta _{1}|>0$ we have from Lemma \ref{CCestimate} that 
$
\left\vert \partial _{\beta _{1}}\nu (p)\right\vert \leq C\left\langle {p}\right\rangle ^{-1}.
$

Then, for fixed $R>0$, we split the second term above as 
\begin{equation*}
\sum_{0<\beta _{1}\leq \beta }C_{\beta _{1}}^{\beta }\int_{{\mathbb{R}}^{\dim }}~dp~w^{2}_{\ell }~\partial _{\beta _{1}}\nu (p)~\partial _{\beta
-\beta _{1}}h~\partial _{\beta }h=\int_{|p|\leq R}+\int_{|p|>R}.
\end{equation*}
On the unbounded part we use Cauchy-Schwartz as follows 
\begin{gather*}
\sum_{0<\beta _{1}\leq \beta }\int_{|p|\geq R}~dp~w^{2}_{\ell }~\left\vert
\partial _{\beta _{1}}\nu (p)~\partial _{\beta -\beta _{1}}h~\partial
_{\beta }h\right\vert \leq \frac{C}{R}|\partial _{\beta }h|_{\nu ,\ell
}\sum_{0<\beta _{1}\leq \beta }|\partial _{\beta -\beta _{1}}h|_{\nu ,\ell }
\\
\leq \frac{C}{R}\sum_{\beta _{1}\leq \beta }
|\partial _{\beta _{1}}h|_{\nu,\ell }^{2}.
\end{gather*}%
On the bounded region we use the compact interpolation of Sobolev-spaces 
\begin{gather*}
\int_{|p|\leq R}dp~
 \sum_{0<\beta _{1}\leq \beta }\left\vert \partial
_{\beta -\beta _{1}}h~\partial _{\beta }h\right\vert 
\leq 
\int_{|p|\leq R}
\left\{
\sum_{0<\beta _{1}\leq \beta }~dp~\left\vert \partial _{\beta -\beta
_{1}}h\right\vert ^{2}+\tilde{\eta} \left\vert \partial _{\beta }h\right\vert ^{2} 
\right\}
\\
\leq 
\eta ^{\prime }\sum_{|\alpha |=|\beta |}\int_{|p|\leq R}~dp~\left\vert
\partial _{\alpha }h\right\vert ^{2}+C_{\eta ^{\prime }}\int_{|p|\leq
R}~dp~\left\vert h\right\vert ^{2} \\
\leq \eta \sum_{|\alpha |\leq |\beta |}|\partial _{\alpha }h|_{\nu ,\ell
}^{2}+C_{\eta }|\mathbf{1}_{\leq R}h|_{2}^{2}.
\end{gather*}
Above the different $\eta>0$ variables are allowed to be arbitrarily small.
This completes the desired estimate. 
\end{proof}

\begin{proposition}
\label{Kestimate} Let $|\beta |\ge 0$ and $\ell \geq 0$. For any small $\eta >0$,  $\exists
C_{\eta }>0$ and $R=R(\eta)>0$ such that the operator from \eqref{kOPdef} satisfies the following estimate:
\begin{equation*}
|\langle w^{2}_{\ell }\partial _{\beta }K(h_{1}),\partial _{\beta }h_{2}\rangle
|\lesssim 
\left\{ \eta \sum_{|\alpha |\leq |\beta |}\left\vert \partial _{\alpha }h_{1}\right\vert _{\nu,\ell}+C_{\eta }\left\vert \mathbf{1}%
_{\leq R}h_{1}\right\vert _{2}\right\} \left\vert \partial _{\beta
}h_{2}\right\vert _{\nu, \ell}.
\end{equation*}
\end{proposition}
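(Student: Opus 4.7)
The plan is to use the splitting $K=K_A+K_{A_c}$ induced by $\Gamma=\Gamma_A+\Gamma_{A_c}$ from \eqref{splitG}, combined with the key observation that by \eqref{kOPdef} every contribution to $K(h_1)$ carries at least one factor of $\sqrt{J}$. Since all momentum derivatives of $\sqrt{J}$ are Schwartz (in particular $|\partial_{\beta'}\sqrt{J}(q)|\lesssim J^{1/4}(q)$ pointwise), expanding $\partial_\beta$ via the Leibniz rule reduces matters to a finite sum of terms of the form treated in Lemmas \ref{thm:nonLINestA} and \ref{nonlinGest:B}, each involving a $\sqrt{J}$-factor whose weighted norms of derivatives are all finite constants.

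For the $A$-piece, estimate \eqref{gammaA} supplies the prefactor $e^{-c|p|^{1/m}}$ in front of the integral. Cauchy--Schwartz and the pre-post change of variables \eqref{PARTICLEjacobian}, together with the exponential decay of $\sqrt{J}(q)$, yield a bound of the form
$$\left|\langle w_\ell^2\,\partial_\beta K_A(h_1),\partial_\beta h_2\rangle\right|\lesssim \int_{\R^3} dp\,e^{-c|p|^{1/m}}\,G_1(p)\,G_2(p),$$
with $\|G_1\|\lesssim \sum_{|\alpha|\le|\beta|}|\partial_\alpha h_1|_{\nu,\ell}$ and $\|G_2\|\lesssim|\partial_\beta h_2|_{\nu,\ell}$. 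Splitting $1=\mathbf{1}_{|p|\ge R}+\mathbf{1}_{|p|\le R}$, the tail is dominated by $e^{-cR^{1/m}}$, which can be made $\le\eta$ for $R=R(\eta)$ large, producing the $\eta$ term in the conclusion. On $\{|p|\le R\}$, the $J^{1/4}(q)$ decay restricts $q$ (and, via $p+q=p'+q'$, also $p',q'$) to a ball $\{|\cdot|\le R'\}$ for some $R'=R'(R)$, so this part is bounded by $C_\eta\,|\mathbf{1}_{\le R'}h_1|_2\,|\partial_\beta h_2|_{\nu,\ell}$.

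For the $A_c$-piece, Lemma \ref{nonlinGest:B} provides the factor $|p|^{-\min\{1,|\beta_0|\}}$ on the region $A_c$, where $|p|\ge 1$. When $|\beta_0|\ge 1$ this gives $1/|p|\le 1/R\le \eta$ on $\{|p|\ge R\}$, and on $\{|p|\le R\}\cap A_c$ the constraint $q^0\lesssim R^{1/m}$ together with \eqref{pcM} localizes $(p'',q'')$ to a bounded set, yielding the compact contribution as above. The main obstacle is the case $|\beta_0|=0$, where no decay factor is available; here every derivative falls on $h_1$ or on $\sqrt{J}$. If any derivative lands on $\sqrt{J}$, the pointwise Schwartz bound $|\partial_{\beta_2}\sqrt{J}(q'')|\lesssim J^{1/4}(q'')$ replaces the missing prefactor and the argument closes as before. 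In the residual case $\beta_1=\beta$, $\beta_2=0$, we estimate the resulting expression via Cauchy--Schwartz, the weight inequality $w_\ell(p)\le w_\ell(p'')w_\ell(q'')$ from \cite[Lemma 2.2]{MR1211782}, and the pre-post change of variables \eqref{changeCM} to rewrite the integral in $(p'',q'')$; splitting once more in $|p|\ge R$ versus $|p|\le R$ and exploiting the decay of $\sqrt{J}(q'')$ then produces the desired $\eta/C_\eta$ dichotomy and completes the proof.
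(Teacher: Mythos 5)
Your splitting into the $A$ and $A_c$ pieces via \eqref{gammaA} and \eqref{estAc} with $f_1=h_1$, $f_2=\sqrt{J}$ matches the paper, but the core of your argument --- bare Cauchy--Schwartz plus the pre-post change of variables plus a split into $\{|p|\ge R\}$ and $\{|p|\le R\}$ --- cannot produce the stated $\eta$ versus $C_\eta|\mathbf{1}_{\le R}h_1|_2$ dichotomy, and this is a genuine gap. First, the case $|\beta|=0$ is not addressed at all: for $\beta=0$ the proposition \emph{is} the compactness estimate for $K$, which Cauchy--Schwartz alone can never give (it only yields $|h_1|_{2,\ell}|h_2|_{2,\ell}$ with an $O(1)$ constant); the paper imports it from \cite[Lemma 3.3]{strainSOFT}. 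Second, and decisively, in the residual top-order terms ($\beta_1=\beta$ in the $A$-piece, and $\beta_0=0$, $\beta_1=\beta$, $\beta_2=0$ in the $A_c$-piece) restricted to $\{|p|\le R\}$ there is no small factor available (the factor $e^{-c|p|^{1/m}}$ is only bounded below by $e^{-cR^{1/m}}$ there, and there is no $1/|p|$ since $\beta_0=0$), and your claimed bound $C_\eta|\mathbf{1}_{\le R'}h_1|_2\,|\partial_\beta h_2|_{\nu,\ell}$ silently discards the $\beta$ derivatives on $h_1$: localizing the momentum support does not convert $|\mathbf{1}_{\le R'}\partial_\beta h_1|_2$ into $|\mathbf{1}_{\le R'}h_1|_2$. (Relatedly, exponential decay of $J^{1/4}(q)$ does not ``restrict'' $q$ to a ball; that part is fixable by a further split in $q$, but the derivative issue is not.) Third, your claim that the Schwartz bound $|\partial_{\beta_2}\sqrt{J}(q'')|\lesssim J^{1/4}(q'')$ ``replaces the missing prefactor'' when $\beta_0=0$ and a derivative lands on $\sqrt{J}$ is unfounded: on $A_c$ with $|p|\ge R$ one can have $q''$ bounded while $p''\approx p+q$ is large, so the $q''$-decay yields no smallness in $p$, and after Cauchy--Schwartz you are left with an $O(1)$ constant in front of a derivative norm of $h_1$, which is not of the required form.

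What the paper does differently at exactly these points is to recognize that each such term defines a \emph{new linear operator of the same type as} $K$ (with kernels carrying $J^{1/4}(q)$ and $J^{1/4}(q'')$, respectively the analogous factors in the Glassey--Strauss frame), and then to invoke the smoothing/compactness estimate for the underived operator from \cite[Lemma 3.3]{strainSOFT}; the genuinely lower-order terms $\beta_1<\beta$ are then handled by compact Sobolev embedding (interpolation on the bounded region) together with the decay factor $e^{-c|p|^{1/m}}$ for large $|p|$. To repair your argument you would need to either cite that compactness result (as the paper does) or reprove a Hilbert--Schmidt/smoothing property for these gain-type kernels; without some such ingredient the $\eta$--$C_\eta$ dichotomy in the statement is out of reach.
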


\begin{proof}
For $|\be| =0$, this follows from \cite[Lemma 3.3]{strainSOFT}.  We then explain how to prove the case with $|\be| > 0$.
To do this we split $K$ into two parts.  We apply \eqref{gammaA} and \eqref{estAc} with $f_{2}=\sqrt{J}$ and $f_{1}=h_1$ 
as in the definition of $K$ from \eqref{kOPdef}, etc.    

For the term as in \eqref{estAc},
we deduce that it is bounded by a different linear kernel with $v_{\o}J^{\frac{1}{4}}(\QrQ)$ and $v_{\o}J^{\frac{1}{4}}(q).$ Such a new linear operator has the same property as the original $K$ in the center of
mass frame with $v_{\o}J^{\frac{1}{2}}(\QrQ)$ and $v_{\o}J^{%
\frac{1}{4}}(q).$ So the Lemma follows from the result for $K$ without
derivatives in \cite[Lemma 3.3]{strainSOFT}.

For (\ref{gammaA}), the term with $\beta _{1}=\beta $ is bounded by 
\begin{multline*}
\int_{{\mathbb{R}}^{\dim }\times {\mathbb{S}}^{2}}~\frac{s B(p,q,\omega)}{\pZ \qZ }
dqd\omega ~~J^{\frac{1}{4}}(q)~
\left\{
|(\partial _{\beta }h_{1})(\rP)J^{\frac{1}{4}}(\rQ)| 
+
|J^{\frac{1}{4}}(\rP) (\partial _{\beta }h_{1})(\rQ)| 
\right\}
\\
+\int_{{\mathbb{R}}^{\dim }\times {\mathbb{S}}^{2}}~\frac{s B(p,q,\omega)}{\pZ \qZ }
dqd\omega ~~J^{\frac{1}{4}}(q)~|(\partial _{\beta }h_{1})(p)J^{\frac{1}{4}
}(q)|.
\end{multline*}
This new linear operator has the same property as the original $K$. So the lemma follows in this case
from the result for $K$ in \cite[Lemma 3.3]{strainSOFT}.  On the other
hand, for terms with $\beta _{1}<\beta ,$ by compact Sobolev imbedding, it
suffices to consider the case where $|p|$ is large, for which the fast decay
factor $e^{-\frac{1}{8}|p|^{1/m}}$ in (\ref{gammaA}) provides the small
constant $\eta $ and we complete the proof.
\end{proof}

\section{Global solution and rapid decay}

\label{sec2}

In this final section, we explain how to use the new estimates from the
previous sections to prove the global existence and rapid decay of nearby
Maxwellian classical solutions to the relativistic Vlasov-Maxwell-Boltzmann system \eqref{rvmlC} and
\eqref{maxwellC} with \eqref{constraintC}. We prove global
existence following the approach from \cite{MR2000470,MR2100057}. The decay
follows as in the method described in \cite{MR2209761}. Since several of
these previously elucidated details \cite{MR2000470,MR2100057,MR2209761} are
similar, we will simply write down the main steps and refer to the prior
results for an elaboration of the full argument.  We aim to make our argument completely precise in the sense that we refer to the exact argument which is needed from the previous work \cite{MR2000470,MR2100057,MR2209761}. 

First in Section \ref{sec:local} we explain the local existence argument.
Then in Section \ref{positivityofL} we exposit the argument for proving the
crucial positivity of the linearized collision operator for solutions to the relativistic Vlasov-Maxwell-Boltzmann system \eqref{rvmlC} and \eqref{maxwellC} with \eqref{constraintC}. Finally in Section \ref{globalsection} we explain how these estimates can be used to prove the
global in time existence and rapid decay.

\subsection{Local solutions}\label{sec:local}

We now sketch the procedure for obtaining a unique local-in time solution to
the relativistic Vlasov-Maxwell-Boltzmann system \eqref{rvmlC}, %
\eqref{maxwellC}, \eqref{constraintC}. These arguments are rather
standard \cite{MR2000470,MR2100057}.

Given a solution $[f(t,x,p),E(t,x),B(t,x)]$ to the relativistic
Vlasov-Maxwell-Boltzmann system we recall the definition of the instant energy functional 
$\CE_{N,\ell}(t)$ in \eqref{def.eNmN}.  
We furthermore define the dissipation rate $\CD_{N,\ell }(t)$ as ($\ell \geq 0$) 
\begin{equation}
\CD_{N,\ell }(t)\eqdef \sum_{|\gamma |+|\beta
|\leq N}\| \partial _{\beta }^{\gamma }f(t)\|_{\nu,\ell}^{2}.
\notag
\end{equation}%
We state the following local existence theorem.

\begin{theorem}
\label{local} Fix $\ell \ge 0$. $\exists M_0>0$, $T^{*}>0$ such that if $%
T^{*}\le M_0/2$ and 
\begin{equation*}
\CE_{N,\ell}(0)\le M_0/2,
\end{equation*}
then there is a unique solution $[f(t,x,p),E(t,x),B(t,x)]$ to the
relativistic Vlasov-Maxwell-Boltzmann system \eqref{rvmlC}, \eqref{maxwellC} and \eqref{constraintC} on 
$[0,T^{*})\times {\mathbb{T}}_x^3\times {\mathbb{R}}_p^3$ such that 
\begin{equation*}
\sup_{0\le t\le T^{*}} \left\{\CE_{N,\ell}(t) + \int_0^t ds ~\CD_{N,\ell}(s) \right\} \le M_0.
\end{equation*}
The high order energy norm $\CE_{N,\ell}(t)$ is continuous over $%
[0,T^{*}).$ 

If 
$
F_0(x,p)=J +J^{1/2}f_0\ge 0,
$
then $F(t,x,p)=J+J^{1/2}f(t,x,p)\ge 0$. Furthermore, the conservation laws (%
\ref{ma}), (\ref{mo}), and (\ref{en}) hold for all $0<t<T^{*}$ if they are valid
initially (at $t=0$).
\end{theorem}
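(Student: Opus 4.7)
The plan is to construct the solution via a linearized iteration scheme of the type used in \cite{MR2000470,MR2100057}. Set the zeroth iterate to $[f^{0},E^{0},B^{0}]\equiv \lbrack 0,0,0]$ and inductively define $[f^{n+1},E^{n+1},B^{n+1}]$ as the unique solution of the linear system obtained from \eqref{rvmlC}--\eqref{constraintC} by (i) freezing the Lorentz transport coefficient $\pm e_{\pm}(E^{n}+\tfrac{p}{\pZ}\times B^{n})\cdot\nabla_{p}f^{n+1}$ at the previous level, (ii) placing the lower-order electric source $\pm\tfrac{1}{2}E^{n}\cdot\tfrac{p}{\pZ}f^{n+1}$ on the right-hand side, (iii) replacing $\Gamma_{\pm}(f,f)$ by $\Gamma_{\pm}(f^{n},f^{n+1})$, and (iv) coupling to a linear Maxwell system driven by the current $\mathcal{J}^{n+1}$ and charge $\rho^{n+1}$ built from $f^{n+1}$. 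This linear system is symmetric-hyperbolic in $(f,E,B)$, and its solvability with $\CE_{N,\ell}(t)<\infty$ on a short interval is classical (linear transport in $(x,p)$ along characteristics generated by $\pZ^{-1}p$ and the given $E^{n},B^{n}$, paired with the linear Maxwell system), so the iteration is well defined.

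The heart of the matter is the uniform-in-$n$ energy estimate. For each multi-index with $|\gamma|+|\beta|\le N$ we apply $\partial_{\beta}^{\gamma}$ to the $f^{n+1}$-equation and pair it with $w_{\ell}^{2}\partial_{\beta}^{\gamma}f^{n+1}$, while for the Maxwell system we apply $\partial^{\gamma}$ and pair with $\partial^{\gamma}[E^{n+1},B^{n+1}]$. The linear collision contribution $\langle w_{\ell}^{2}\partial_{\beta}^{\gamma}L f^{n+1},\partial_{\beta}^{\gamma}f^{n+1}\rangle$ produces a coercive $|\partial_{\beta}^{\gamma}f^{n+1}|_{\nu,\ell}^{2}$ via Proposition \ref{Aestimate}, while the $K$-contribution is absorbed through Proposition \ref{Kestimate} after a compact Sobolev interpolation on $\mathbf{1}_{\le R}$. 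The bilinear piece $\langle w_{\ell}^{2}\partial_{\beta}^{\gamma}\Gamma(f^{n},f^{n+1}),\partial_{\beta}^{\gamma}f^{n+1}\rangle$ is controlled by Theorem \ref{thm:nonLINest}, which is exactly where the new Glassey-Strauss/center-of-mass splitting makes the scheme close. The Vlasov streaming term $\tfrac{p}{\pZ}\cdot\nabla_{x}$ is skew, and the Lorentz/source contributions involving $[E^{n},B^{n}]$ are handled with $L^{\infty}$-Sobolev bounds (since $N\ge 4$) exactly as in \cite{MR2000470}. Combining everything produces a Gronwall-type inequality
\begin{equation*}
\tfrac{d}{dt}\CE_{N,\ell}^{n+1}(t)+c\,\CD_{N,\ell}^{n+1}(t)\lesssim \bigl(1+\sqrt{\CE_{N,\ell}^{n}(t)}+\sqrt{\CE_{N,\ell}^{n+1}(t)}\bigr)\bigl(\CE_{N,\ell}^{n}(t)+\CE_{N,\ell}^{n+1}(t)\bigr),
\end{equation*}
which, together with the smallness of $\CE_{N,\ell}(0)$, yields $\sup_{[0,T^{*}]}\CE_{N,\ell}^{n+1}+\int_{0}^{T^{*}}\CD_{N,\ell}^{n+1}\le M_{0}$ for some $T^{*}=T^{*}(M_{0})>0$ uniformly in $n$.

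To pass to the limit, the plan is to examine the differences $[f^{n+1}-f^{n},E^{n+1}-E^{n},B^{n+1}-B^{n}]$, which satisfy a linear system whose forcing depends on lower-order iterates. Running the same energy machinery at level $N-1$, the nonlinear bilinear losses produce a factor of $T^{*}\sqrt{M_{0}}$, so the scheme is a contraction in the lower-order energy once $T^{*}$ is small. The limit is a solution in $\CE_{N,\ell}$ by weak-$*$ compactness, and uniqueness follows by the same difference estimate applied to any two solutions; continuity of $t\mapsto\CE_{N,\ell}(t)$ comes from the energy identity. Nonnegativity of $F_{\pm}$ is recovered by rewriting the equation for $F_{\pm}$ itself as $\partial_{t}F_{\pm}+\tfrac{p}{\pZ}\cdot\nabla_{x}F_{\pm}\pm e_{\pm}(E+\tfrac{p}{\pZ}\times B)\cdot\nabla_{p}F_{\pm}+\nu(F)F_{\pm}=\mathcal{Q}^{+}(F)\ge 0$ along Vlasov characteristics and Duhamel iterating in the nonnegative cone, exactly as in \cite{MR2000470}. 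The conservation laws \eqref{ma}, \eqref{mo}, \eqref{en} follow by pairing the equation against $\sqrt{J}$, $p\sqrt{J}$, $\pZ\sqrt{J}$ and combining with the Maxwell energy identity.

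The main obstacle is the uniform bound for $|\beta|=N$ momentum derivatives at the top level: the Lorentz term provides no smoothing in $p$ and the relativistic collision operator is notoriously rough in $p$ because of the Glassey-Strauss growth \eqref{GSgrowth}. This is precisely the difficulty resolved by Theorem \ref{thm:nonLINest} through the $A/A_{c}$ decomposition \eqref{ABsplitting}, so once this nonlinear estimate and the accompanying linear bounds of Section \ref{sec:linearE} are in hand, the local theory reduces to the standard scheme of \cite{MR2000470,MR2100057}.
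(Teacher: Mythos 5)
Your proposal is correct and follows essentially the same route as the paper, which proves Theorem \ref{local} by invoking the standard linearized iteration and energy scheme of \cite{MR2100057} (relativistic Landau--Maxwell) together with the new estimates of Theorem \ref{thm:nonLINest} and Section \ref{sec:linearE}, and obtains positivity as in \cite{MR2000470}. Your write-up simply makes explicit the iteration, uniform energy bound, contraction, positivity, and conservation-law steps that the paper defers to those references.
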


This local existence theorem can be proven in the standard way using the
estimates in this paper combined with the local existence proof in \cite%
{MR2100057} for the relativistic Landau-Maxwell system. The positivity of
solutions follows from the proof in \cite{MR2000470}.

\subsection{Positivity of L}

\label{positivityofL} In this subsection we elucidate the positivity of the
linearized operator \eqref{L}, $L$, for any small amplitude solution $%
[f(t,x,p),E(t,x),B(t,x)]$ to the full relativistic Vlasov-Maxwell-Boltzmann
system \eqref{rvmlC}, \eqref{maxwellC} and \eqref{constraintC}.

Our main result in this section is as follows.

\begin{theorem}
\label{positive}Let $[f(t,x,p),E(t,x),B(t,x)]$ be a classical solution to (%
\ref{rvmlC}) and (\ref{maxwellC}) satisfying (\ref{constraintC}), (\ref{ma}%
), (\ref{mo}) and (\ref{en}). Then there exists an $M_{0}>0$ and a $\delta
_{0}=\delta _{0}(M_{0})>0$ such that if $N\geq 4$ and 
\begin{equation}
\sum_{|\gamma |\leq N}\left\{ \frac{1}{2}||\partial ^{\gamma
}f(t)||^{2}+||\partial ^{\gamma }E(t)||^{2}+||\partial ^{\gamma
}B(t)||^{2}\right\} \leq M_{0},  \label{m0}
\end{equation}%
then 
\begin{equation*}
\sum_{|\gamma |\leq N}\left( L\partial ^{\gamma }f(t),\partial ^{\gamma
}f(t)\right) \geq \delta _{0}\sum_{|\gamma |\leq N}||\partial ^{\gamma
}f(t)||^{2}.
\end{equation*}
\end{theorem}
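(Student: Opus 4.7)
The plan is to follow the macro-micro decomposition strategy of \cite{MR2000470,MR2100057}, adapted to the relativistic setting, with the linear and nonlinear estimates from Sections \ref{sec:momD}--\ref{sec:linearE} playing the role of their Newtonian counterparts. Write $f = \FP f + (\FI - \FP) f$, where $\FP$ is the $L^2_p$-orthogonal projection onto $\nullSpace$. Standard arguments (cf.\ \cite{MR635279}) identify $\nullSpace$ as the span of the relativistic collision invariants $\sqrt{J},\ p\sqrt{J},\ \pZ\sqrt{J}$ for each species, so that we may write
\begin{equation*}
\FP f = \left\{ a_\pm(t,x) + b(t,x)\cdot p + c(t,x)\,\pZ \right\} \sqrt{J},
\end{equation*}
with coefficients $(a_+, a_-, b, c)$ uniquely determined by $f$. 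The first building block is the standard microscopic coercivity
\begin{equation*}
(Lh,h) \gtrsim \| (\FI-\FP) h \|_\nu^2,
\end{equation*}
which follows from Propositions \ref{Aestimate} and \ref{Kestimate} applied with $|\beta|=0$ together with the compactness of $K$; this handles every $\partial^\gamma f$ modulo the macroscopic part.

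The heart of the argument, and the main obstacle, is the \emph{macroscopic estimate}
\begin{equation*}
\sum_{|\ag|\le N} \| \partial^\ag \FP f(t) \|^2 \lesssim \sum_{|\ag|\le N} \| (\FI-\FP) \partial^\ag f(t) \|_\nu^2 + \sqrt{M_0}\, \sum_{|\ag|\le N} \| \partial^\ag f(t) \|^2,
\end{equation*}
uniformly in $t$ along the solution. I would obtain this by plugging the ansatz for $\FP f$ into \eqref{rvmlC}, testing against the collision invariants $\sqrt{J}$, $p_i\sqrt{J}$, $\pZ\sqrt{J}$, and extracting a closed set of local conservation laws for $(a_\pm, b, c)$. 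These will take the schematic form
\begin{align*}
\partial_t a_\pm + \nabla_x \cdot (\cdots) &= \ell^a_\pm((\FI-\FP)f) + g^a_\pm(f,E,B),\\
\partial_t b + \nabla_x(\cdots) &= \ell^b((\FI-\FP)f) + g^b(f,E,B) + \text{Lorentz terms},\\
\partial_t c + \nabla_x \cdot (\cdots) &= \ell^c((\FI-\FP)f) + g^c(f,E,B),
\end{align*}
where the $\ell^*$'s are linear in $(\FI-\FP)f$ and its first $p$-moments, and the $g^*$'s are quadratic in $f$ and linear in $E,B$; the latter are controlled in $L^\infty_x$ by Sobolev embedding and the smallness hypothesis \eqref{m0}. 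Differentiating these relations in $(t,x)$ and applying elliptic-type estimates on $\mathbb{T}^3$ (the coefficients satisfy overdetermined systems of the type handled in \cite{MR2000470,MR2100057}) gives control of $\partial^\ag(a_\pm, b, c)$ in $L^2_x$ by $\partial^\ag (\FI-\FP)f$ modulo spatial averages, together with nonlinear remainders bounded by $\sqrt{M_0}\sum_{|\ag|\le N}\|\partial^\ag f\|^2$.

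The spatial averages $\int_{\mathbb{T}^3} a_\pm,\ \int_{\mathbb{T}^3} b,\ \int_{\mathbb{T}^3} c\, dx$ are precisely where the global conservation laws \eqref{ma}, \eqref{mo}, \eqref{en} enter: they force these averages to be quadratically small in $(f,E,B)$, hence absorbed on the right-hand side for $M_0$ sufficiently small. Combining the microscopic coercivity with the macroscopic estimate and using \eqref{m0} to absorb the nonlinear remainders, one concludes
\begin{equation*}
\sum_{|\ag|\le N} (L \partial^\ag f, \partial^\ag f) \gtrsim \sum_{|\ag|\le N} \| (\FI-\FP)\partial^\ag f \|_\nu^2 \gtrsim \delta_0 \sum_{|\ag|\le N} \|\partial^\ag f\|^2,
\end{equation*}
after choosing $M_0$ small enough. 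The main technical obstacle is the derivation of the local conservation laws for $(a_\pm, b, c)$ in the relativistic regime, where the velocity $p/\pZ$ and energy $\pZ$ replace their Newtonian counterparts and the moments that appear must be expressed through Bessel-function integrals; however, these are explicit and structurally identical to those in \cite{MR2000470,MR2100057}, so the argument goes through once the momentum-regularity obstruction (now resolved by Theorem \ref{thm:nonLINest}) is out of the way.
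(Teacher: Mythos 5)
Your overall strategy coincides with the paper's: macro--micro decomposition with respect to $\nullSpace$, microscopic coercivity of $L$, control of $\mathbf{P}f$ by $\{\mathbf{I-P}\}f$ up to $\sqrt{M_0}$-small remainders, and the global conservation laws \eqref{ma}--\eqref{en} to fix the spatial means. However, the mechanism you give for the crucial macroscopic estimate has a genuine gap. You propose to test \eqref{rvmlC} only against the collision invariants $\sqrt{J}$, $p_i\sqrt{J}$, $\pZ\sqrt{J}$ and to work with the resulting local conservation laws for $(a_\pm,b,c)$. Those evolution equations are not the system the argument needs. The relations that give purely spatial, elliptic-type control of the coefficients --- \eqref{bij} (i.e. $(1-\delta_{ij})\partial^i b_j+\partial^j b_i=l_{ij}+h_{ij}$), \eqref{c}, and \eqref{ai} (i.e. $\partial^i a_\pm\mp E_i=l_{ai\pm}+h_{ai\pm}$) --- arise from comparing coefficients of \eqref{macro} against the larger basis $\{e_k\}$, which contains the \emph{non-invariant} moments $p_j\sqrt{J}/\pZ$ and $p_ip_j\sqrt{J}/\pZ$. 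If you use only the conservation laws, then, for example, the momentum law reads schematically $\partial_t(\text{$b$-moment})+\nabla_x(\text{combination of }a_\pm,c)=(\text{micro})+(\text{field})+(\text{quadratic})$, so every attempt to bound $\|\nabla\partial^\gamma(a_\pm,c)\|$ reintroduces $\partial_t\partial^\gamma b$, a hydrodynamic quantity of the same total order, and the estimate does not close (nor can $\nabla a_\pm$ and $\nabla c$ be separated). This is exactly why the paper derives the full overdetermined system \eqref{bi}--\eqref{adot} and then estimates its coefficients via Propositions \ref{linear} and \ref{high} (whose proofs, in turn, need Theorem \ref{thm:nonLINest} for the terms $\langle L\{\mathbf{I-P}\}\partial^\gamma f,e_k\rangle$ and $\langle\partial^\gamma\Gamma(f,f),e_k\rangle$).

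A second, related problem is your treatment of the electromagnetic coupling. You place the field contributions into remainders $g^*$ described as ``quadratic in $f$ and linear in $E,B$'' and absorb them by the smallness \eqref{m0}; but the term $\mp\{E\cdot p/\pZ\}\sqrt{J}$ in \eqref{rvmlC} is \emph{linear} in the unknowns, so it cannot be absorbed with a factor $\sqrt{M_0}$. In the paper it appears as the $\mp E_i$ in \eqref{ai} and is handled by a separate estimate of the field in terms of $f$ (Proposition \ref{field}), which exploits the Maxwell system together with the cancellation \eqref{pf}: the current $\mathcal{J}$ sees only $\{\mathbf{I-P}\}f$ because the hydrodynamic contributions of the two species cancel. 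Your outline contains no analogue of this field estimate, and without it neither the $a_\pm$-equation nor the momentum law you wrote is controlled. The remaining ingredients you list --- coercivity of $L$ on $\nullSpace^\perp$ from $L=\nu-K$ with $\nu\approx 1$ and $K$ compact, Poincar\'e/elliptic estimates on $\mathbb{T}^3$, and the use of \eqref{ma}--\eqref{en} with \eqref{constraintC} for the spatial averages --- are correct and match the paper's argument, which otherwise follows \cite[Theorem 2]{MR2100057}.
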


In the rest of this section, we always work exclusively with a classical
solution $[f(t,x,p),E(t,x),B(t,x)]$ to (\ref{rvmlC}) and (\ref{maxwellC}).
This argument proceeds, as is customary, via a careful study of the the six
dimensional null space of $L$, for any fixed $(t,x)$; this null space is given by ($1\le i\le 3$)   
\begin{equation}
\nullSpace \eqdef  \mathrm{span}\left\{[\sqrt{J}%
,0], [0,\sqrt{J}], [p_i\sqrt{J},p_i \sqrt{J}],\;[\pZ \sqrt{J}, \pZ  \sqrt{J%
}]\right\}.  \label{null}
\end{equation}
Define the orthogonal projection from $L^2(\mathbb{R}^3_p)$ into $\nullSpace$ by $\mathbf{P}$. Then decompose 
\begin{equation*}
f=\mathbf{P}f+\{\mathbf{I-P}\}f.
\end{equation*}
It is now standard to call $\mathbf{P}f=[\mathbf{P}_+f, \mathbf{P}_-f]\in 
\mathbb{R}^2$ the ``hydrodynamic part'' of $f$ and $\{\mathbf{I-P}\}f=[\{%
\mathbf{I-P}\}_+f, \{\mathbf{I- P}\}_-f]$ the ``microscopic part.'' By
separating its linear and nonlinear part, and using $L_\pm\left(\mathbf{P}
f\right)=0$, we can express the hydrodynamic part of $f$ through the
microscopic part up to a higher order term $h(f)$: 
\begin{equation}
\left\{\partial _t+\frac{p}{\pZ }\cdot \nabla _x\right\}\mathbf{P}_{\pm}f
\mp \left\{E\cdot \frac{p}{\pZ }\right\}\sqrt{J} = l_\pm(\{\mathbf{I-P}\}
f)+h_\pm(f).  \label{macro}
\end{equation}
This is a decomposition of \eqref{rvmlC} (with normalized constants), where 
\begin{eqnarray}
l_\pm(\{\mathbf{I-P}\} f) \eqdef  -\left\{\partial
_t+\frac{p}{\pZ }\cdot \nabla _x\right\}\{\mathbf{I-P}\}_\pm f -
L_\pm\left(\{\mathbf{I-P}\} f\right),  
\label{lconst} \\
h_\pm(f) \eqdef  \mp \left(E+\frac{p}{\pZ }\times
B\right)\cdot \nabla _p f_\pm \pm \left\{E\cdot \frac{p}{\pZ }%
\right\}f_\pm+\Gamma_\pm (f,f).  \label{hconst}
\end{eqnarray}
We further expand $\mathbf{P}_\pm f$ as a linear combination of the basis in
(\ref{null}) as 
\begin{equation}
\mathbf{P}_\pm f =
\left\{a_{\pm}(t,x)+\sum_{j=1}^3b_j(t,x)p_j+c(t,x)\pZ  \right\}\sqrt{J(p)}.
\label{p0}
\end{equation}
The positivity of $L$ is obtained via a careful study of the relativistic
system of macroscopic equations (\ref{bi}) - (\ref{adot}); this system was
derived in \cite{MR2100057}.

We will sketch the derivation of (\ref{bi}) - (\ref{adot}) for the convenience of the reader. Expand the left side of (%
\ref{macro}) with respect to the terms in (\ref{p0}) as 
\begin{equation*}
\left\{\partial^0 a_\pm+ \frac{p_j}{\pZ }\left\{ \partial^j a_{\pm}\mp
E_j\right\} +\frac{p_jp_i}{\pZ }\partial^i b_j+p_j\left\{\partial^0
b_j+\partial^j c\right\} +\pZ  \partial^0 c\right\}\sqrt{J(p)}.
\end{equation*}
Here $\partial ^0=\partial _t$ and $\partial ^j=\partial _{x_j}$. For fixed (%
$t,x$), this is an expansion of left side of (\ref{macro}) with respect to
the basis of $\{e_k\}$, whose components are given by ($1\le i,j\le 3)$ 
\begin{gather}
[\sqrt{J},0], [0,\sqrt{J}], [p_j\sqrt{J}/\pZ ,0], [0,p_j\sqrt{J}/\pZ ], p_j%
\sqrt{J}[1,1], p_jp_i\sqrt{J}/\pZ  [1,1], \pZ \sqrt{J} [1,1].  \notag
\end{gather}
We expand the right side of (\ref{macro}) with respect to the same basis and
compare the coefficients on both sides to obtain the macroscopic equations: 
\begin{eqnarray}
&&\partial ^0c=l_c+h_c,  \label{bi} \\
&&\partial ^i c+\partial^0 b_i=l_i+h_i^{},  \label{c} \\
&&(1-\delta_{ij})\partial ^ib_j+\partial ^jb_i=l_{ij}+h_{ij},  \label{bij} \\
&&\partial^ia_{\pm }\mp E_i=l_{ai\pm }+h_{ai\pm},  \label{ai} \\
&&\partial ^0a_{\pm }=l_{a\pm }+h_{a\pm }.  \label{adot}
\end{eqnarray}
To ease the notation we define the following index set 
\begin{equation*}
\mathcal{M} \eqdef  \left\{c, ~i,~ ij, ~ai\pm,~ a\pm
\left| ~ i, j = 1,2,\dim\right. \right\}.
\end{equation*}
Thus $\mathcal{M}$ is the collection of all indices in the macroscopic
equations. For $\lambda \in \mathcal{M}$ each $l_\macroCOE (t,x)$ are the
coefficients of $l(\{\mathbf{I-P}\}f)$ with respect to the basis elements $%
\{e_k\}$; similarly for each $h_\macroCOE(t,x)$. Precisely, a given $%
l_\macroCOE$ can be expressed as 
\begin{equation*}
l_\macroCOE = \sum_{k} C_k^\macroCOE \langle [l_+(\{\mathbf{I-P}\}f),l_-(\{%
\mathbf{I-P}\}f)], e_k \rangle, \quad C_k^\macroCOE \in {\mathbb{R}}.
\end{equation*}
Also the $h_\macroCOE(t,x)$ can be computed similarly.

From (\ref{p0}) and (\ref{maxwellC}) 
(with normalized constants) we see that 
\begin{eqnarray}
\mathcal{J}=-\int_{{\mathbb{R}}^\dim} [p\sqrt{J }/\pZ ,-p\sqrt{J }/\pZ ]
\cdot \{\mathbf{I-P}\}fdp.  \label{pf}
\end{eqnarray}
Next, we explain how to estimate each of these terms.

\begin{proposition}
\label{linear} Suppose that $N\geq 4$, then we have 
\begin{equation*}
\sum_{|\gamma |\leq N-1}\left( \sum_{\lambda \in \mathcal{M}}\Vert \partial
^{\gamma }l_{\lambda}\Vert +\Vert \partial ^{\gamma }\mathcal{J}\Vert
\right) \lesssim \sum_{|\gamma |\leq N}\Vert \{\mathbf{I-P}\}\partial
^{\gamma }f\Vert .
\end{equation*}
\end{proposition}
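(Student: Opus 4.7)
The plan is to exploit that each $l_\lambda(t,x)$ is a finite linear combination of $L^2_p$ pairings $\langle [l_+(\{\mathbf{I-P}\}f), l_-(\{\mathbf{I-P}\}f)], e_k\rangle$ against the fixed basis functions $e_k$, each of which decays rapidly in $p$ through the factor $\sqrt{J}$. Because $e_k$ is independent of $(t,x)$, and because $\partial^\gamma$, the streaming operator, the linear operator $L$, and the projection $\{\mathbf{I-P}\}$ all commute with each other (in the appropriate sense), we may interchange $\partial^\gamma$ with the $p$-pairing and estimate each resulting integral pointwise in $x$. Then Minkowski's inequality produces the $L^2_x$ bound. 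The estimate for $\mathcal{J}$ is handled in parallel.

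Step by step: decompose $l_\pm$ from \eqref{lconst} into a transport part $l^{\mathrm{tr}}_\pm \eqdef -(\partial_t + \tfrac{p}{p^{0}}\!\cdot\!\nabla_x)\{\mathbf{I-P}\}_\pm f$ and a collision part $l^L_\pm \eqdef -L_\pm(\{\mathbf{I-P}\}f)$. For the transport piece, applying $\partial^\gamma$ yields
\[
\partial^\gamma l^{\mathrm{tr}}_\pm = -\bigl(\partial_t + \tfrac{p}{p^{0}}\!\cdot\!\nabla_x\bigr)\{\mathbf{I-P}\}_\pm \partial^\gamma f,
\]
whose total derivative order in $(t,x)$ is $|\gamma|+1 \le N$. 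Pairing in $p$ with $e_k$ (which carries a $\sqrt{J}$-factor absorbing both $p/p^{0}$ and any polynomial), Cauchy--Schwarz in $p$ followed by integration in $x$ gives a bound by $\sum_{|\gamma'|\le N}\|\{\mathbf{I-P}\}\partial^{\gamma'} f\|$. For the collision piece, commuting $L_\pm$ with $\partial^\gamma$ and with $\{\mathbf{I-P}\}$ gives $\partial^\gamma l^L_\pm = -L_\pm\{\mathbf{I-P}\}\partial^\gamma f$. Since $\nu\lesssim 1$ and the $\beta=0$ case of Proposition \ref{Kestimate} yields $|Kh|_{\nu}\lesssim |h|_{\nu}$ by duality, we have the $L^2_p$-boundedness $|L h|_2 \lesssim |h|_2$. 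Pairing with $e_k$ and applying Cauchy--Schwarz then bounds $|\langle \partial^\gamma l^L_\pm, e_k\rangle|$ by $|\{\mathbf{I-P}\}\partial^\gamma f|_2$, and integration in $x$ completes the estimate for $l_\lambda$.

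For the current, we first check that only the microscopic part contributes. Using the expansion \eqref{p0} we see that $\mathbf{P}_+f - \mathbf{P}_-f = (a_+ - a_-)\sqrt{J}$ since the $b_j$ and $c$ coefficients are common to both species, and $\int \tfrac{p}{p^{0}} J\,dp = 0$ by odd parity. Hence the $\mathbf{P}f$ contribution to $\mathcal{J}$ vanishes, giving \eqref{pf}. Then $\partial^\gamma \mathcal{J}$ is a moment of $\{\mathbf{I-P}\}\partial^\gamma f$ against the rapidly-decaying function $\tfrac{p}{p^{0}}\sqrt{J}$, and Cauchy--Schwarz in $p$ directly yields $\|\partial^\gamma \mathcal{J}\|_{L^2_x} \lesssim \|\{\mathbf{I-P}\}\partial^\gamma f\|$ for any $|\gamma|\le N-1$.

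The main potential obstacle is purely bookkeeping: verifying that each derivative generated is of order at most $N$, and that the microscopic projection is preserved through every manipulation (which relies on $L\mathbf{P}=0$ so that $L f = L\{\mathbf{I-P}\}f$, and on the parity cancellation isolating $\{\mathbf{I-P}\}f$ in $\mathcal{J}$). The only nontrivial analytic input beyond these algebraic observations is the $L^2_p$-boundedness of $L$, which has already been established via $\nu\approx C_\sigma$ and the $\beta=0$ case of Proposition \ref{Kestimate}.
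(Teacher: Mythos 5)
Your argument is correct and is essentially the proof the paper obtains by citing \cite[Lemma 11]{MR2100057}: the transport part of each $l_{\lambda}$ costs one extra $(t,x)$-derivative (hence the shift from $N-1$ to $N$), the collision part reduces to pairing $L\{\mathbf{I-P}\}\partial^{\gamma}f$ against the rapidly decaying basis elements $e_k$, and $\mathcal{J}$ is a moment of the microscopic part thanks to the parity cancellation giving \eqref{pf}. The only cosmetic difference is that you supply the $L$-term via $\nu\approx \mathrm{const}$ together with the $\beta=0$ case of Proposition \ref{Kestimate} and duality (giving $|Lh|_{2}\lesssim |h|_{2}$), whereas the paper invokes Theorem \ref{thm:nonLINest} to bound $\langle L\{\mathbf{I-P}\}\partial^{\gamma}f,e_k\rangle$ directly; both inputs are available in the paper and the rest of your bookkeeping (commutation of $\partial^{\gamma}$ with $\mathbf{P}$ and $L$, Cauchy--Schwarz in $p$, then $L^{2}_{x}$) matches the cited argument.
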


This Proposition \ref{linear} is proven for the relativistic Landau-Maxwell
system in \cite[Lemma 11]{MR2100057}. Because the structure of the operator $%
l_\macroCOE$ is similar, the proof of Proposition \ref{linear}
is exactly the same as \cite[Lemma 11]{MR2100057}. The only difference
between these cases is for the term $L_\pm$ in $l_\macroCOE$, since the
operator $L_\pm$ is in fact very different. However because of \eqref{L} the required estimate for $\left< {L\{\mathbf{I-P}%
\}\partial^\gamma f,e_k} \right> $ used in \cite[Lemma 11]{MR2100057} is
supplied by our Theorem \ref{thm:nonLINest}.  We then refer to \cite[Lemma 11]{MR2100057} for the rest.

We now estimate coefficients of the higher order term $h(f)$ from %
\eqref{hconst}.

\begin{proposition}
\label{high}Let (\ref{m0}) be valid for some $M_{0}>0$. Then 
\begin{equation*}
\sum_{|\gamma |\leq N}\sum_{\lambda \in \mathcal{M}}\Vert \partial ^{\gamma
}h_{\lambda}\Vert \leq C\sqrt{M_{0}}\sum_{|\gamma |\leq N}\Vert \partial
^{\gamma }f\Vert .
\end{equation*}
\end{proposition}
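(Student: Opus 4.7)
The plan is to reduce the estimate of each scalar coefficient $h_\lambda(t,x)$ to a weighted $L^2_p$ estimate on $h_\pm(f)$ itself, and then split the three contributions in \eqref{hconst} and treat them one by one using Sobolev embedding on $\mathbb{T}^3$ together with the smallness hypothesis \eqref{m0}. Concretely, each $h_\lambda$ is a finite linear combination of inner products $\langle [h_+(f),h_-(f)], e_k\rangle$ against basis elements $e_k$ that carry a factor $\sqrt{J(p)}$ (possibly multiplied by a polynomial in $p$ or $1/\pZ$). Hence, after applying any $\partial^\gamma$ with $|\gamma|\le N$ and Cauchy--Schwarz in $p$, the task reduces to bounding $\sum_\pm \|\partial^\gamma h_\pm(f)(\cdot,\cdot,p)\sqrt{J(p)}\,\langle p\rangle^C\|$ in $L^2_{x,p}$ for an unimportant polynomial weight $\langle p\rangle^C$ (absorbed by $\sqrt{J}$).

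For the two Lorentz force contributions $\mp (E+\frac{p}{\pZ}\times B)\cdot \nabla_p f_\pm$ and $\pm (E\cdot\frac{p}{\pZ})f_\pm$, I would expand $\partial^\gamma$ by the Leibniz rule, giving sums of terms of the form $(\partial^{\gamma_1}[E,B])(\partial^{\gamma_2}f_\pm)$ and, respectively, $(\partial^{\gamma_1}[E,B])(\partial^{\gamma_2}\nabla_p f_\pm)$, with $\gamma_1+\gamma_2=\gamma$. Since $N\ge 4$, for each such product at least one of $\gamma_1$ or $\gamma_2$ has order $\le N-2\le N-2$, so Sobolev embedding $H^2(\mathbb{T}^3)\hookrightarrow L^\infty(\mathbb{T}^3)$ places the low-order factor in $L^\infty_x$ and the high-order factor in $L^2_{x,p}$. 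The weight $\sqrt{J}\langle p\rangle^C$ absorbs the factors of $p$ and kills the $\nabla_p$: for the $\nabla_p f_\pm$ piece, I integrate by parts in $p$ against the $\sqrt{J}$-type basis element before taking the $L^2_p$ norm, moving the $p$-derivative onto the smooth exponentially decaying factor and avoiding any use of momentum regularity. The resulting bound is of the form $\|\partial^{\gamma_1}[E,B]\|_{L^\infty_x}\|\partial^{\gamma_2}f\|_{L^2_{x,p}}$ (or the symmetric one), which by \eqref{m0} and Sobolev embedding is $\lesssim \sqrt{M_0}\sum_{|\gamma|\le N}\|\partial^\gamma f\|$.

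For the collision contribution $\Gamma_\pm(f,f)$, I would first distribute $\partial^\gamma$ via Leibniz to get a sum of $\Gamma_\pm(\partial^{\gamma_1}f,\partial^{\gamma_2}f)$ with $\gamma_1+\gamma_2=\gamma$. Then, pointwise in $(t,x)$, I apply Theorem \ref{thm:nonLINest} with $|\beta|=0$ and $\ell=0$: pairing against the rapidly decaying basis element $e_k$ (viewed as the role of $\partial_\beta f_3$) yields, in $L^2_p$, a bound by $|\partial^{\gamma_1}f(t,x,\cdot)|_{2}\,|\partial^{\gamma_2}f(t,x,\cdot)|_{2}$. I then take the $L^2_x$ norm and, as above, use Sobolev embedding $H^2(\mathbb{T}^3)\hookrightarrow L^\infty(\mathbb{T}^3)$ to put whichever of the two factors has spatial derivative order $\le N-2$ into $L^\infty_x$, and the remaining one into $L^2_x$. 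Invoking \eqref{m0} on the $L^\infty_x$ factor produces the desired $\sqrt{M_0}$ gain and completes the bound.

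The only step requiring care is the Lorentz force piece involving $\nabla_p f_\pm$, because a priori a $p$-derivative on $f$ would fall outside the norm appearing on the right-hand side, which sums only over spatial derivatives. The resolution, already indicated above, is that $h_\lambda$ is defined via pairing with smooth rapidly decaying $e_k$, so an integration by parts in $p$ is available before any $L^2_p$ estimate is made, transferring the $p$-derivative to a still-rapidly-decaying weight; after that the argument is routine. Collecting the three bounds yields the claimed estimate.
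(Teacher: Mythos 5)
Your argument is correct and follows essentially the same route as the paper, which simply defers to the proof of \cite[Lemma 12]{MR2100057} (pairing $\partial^{\gamma}h_{\pm}(f)$ against the rapidly decaying basis elements $e_k$, integrating by parts in $p$ for the Lorentz-force term so that no momentum derivatives of $f$ appear, and using Sobolev embedding on $\mathbb{T}^3$ together with \eqref{m0} to extract $\sqrt{M_0}$), with the collision contribution $\langle \partial^{\gamma}\Gamma(f,f),e_k\rangle$ handled exactly as you do via Theorem \ref{thm:nonLINest} with $\beta=0$, $\ell=0$. Your write-up is in effect the argument the paper cites, spelled out.
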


As in the previous proposition, Proposition \ref{high} is proven for the
relativistic Landau-Maxwell system in \cite[Lemma 12]{MR2100057}. Again the
structure of $h_\macroCOE$ is similar in both cases, meaning that the proof
of Proposition \ref{high} the same as \cite[Lemma 12]{MR2100057}. The only
difference between these cases is for the term $\Gamma_\pm$ in $h_\macroCOE$%
; the operator $\Gamma_\pm$ is in again quite different. However the needed
estimate for $\left< {\partial^\gamma\Gamma(f,f),e_k} \right> $ used in \cite%
[Lemma 12]{MR2100057} is supplied by our Theorem \ref{thm:nonLINest}.
Otherwise the proof is exactly the same, and for the full details we refer
to the proof in \cite[Lemma 12]{MR2100057}.

Next we estimate the electromagnetic field $[E(t,x),B(t,x)]$ in terms of $%
f(t,x,p)$ through the macroscopic equation (\ref{ai}) and the Maxwell system
\eqref{pf}.

\begin{proposition}
\label{field}Let $[f(t,x,p),E(t,x),B(t,x)]$ be the solution to (\ref{rvmlC}%
), (\ref{maxwellC}) and (\ref{constraintC}) constructed in Theorem \ref%
{local} with the constant \eqref{bar}. Let the small amplitude assumption (\ref{m0}) be valid for some $%
0<M_{0}\leq 1$. Then we have
\begin{equation*}
\sum_{|\gamma |\leq N-1}
\left( 
||\partial ^{\gamma }E(t)||+||\partial ^{\gamma
}\{B(t)-\bar{B}\}||
\right)
\lesssim \sum_{|\gamma |\leq N}\Vert \partial ^{\gamma
}f(t)\Vert .
\end{equation*}
\end{proposition}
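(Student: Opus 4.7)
The plan is to read the electric field off from the macroscopic equation (\ref{ai}), and to recover $B-\bar{B}$ from the Maxwell system \eqref{maxwellC} via the standard Hodge/elliptic identity on $\mathbb{T}^3$ for divergence-free, zero-mean vector fields. All right-hand side remainders will then be absorbed using Propositions \ref{linear} and \ref{high}. This is the structural argument already used in the Newtonian setting \cite{MR2000470,MR2100057}; what makes it go through in our relativistic context is that the collisional ingredients feeding Propositions \ref{linear} and \ref{high} have been supplied by our new Theorem \ref{thm:nonLINest}.

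For $E$, I would subtract the $+$ and $-$ versions of (\ref{ai}) to obtain the pointwise identity
\[
2 E_i = \partial^i(a_+ - a_-) - (l_{ai+} - l_{ai-}) - (h_{ai+} - h_{ai-}).
\]
Applying $\partial^\gamma$ with $|\gamma|\le N-1$ and taking $L^2(\mathbb{T}^3)$ norms, the gradient term is controlled by $\|\partial^{\gamma+e_i}\mathbf{P}f\|\lesssim \|\partial^{\gamma+e_i}f\|$, since $a_\pm\sqrt{J}$ is a component of $\mathbf{P}f$, and the multi-index $\gamma+e_i$ has length $\le N$, exactly as required on the right-hand side of the proposition. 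The $l$-contributions are bounded by Proposition \ref{linear} and the $h$-contributions by Proposition \ref{high}; the $\sqrt{M_0}\le 1$ prefactor from the latter is absorbed into the implicit constant.

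For $B-\bar{B}$, the first equation of \eqref{maxwellC} gives $\nabla_x\times B = \partial_t E + 4\pi\mathcal{J}$, while $\nabla_x\cdot(B-\bar{B})=0$ and $\int_{\mathbb{T}^3}(B-\bar{B})\,dx=0$ by \eqref{bar}. The Fourier-side identity $\|u\|_{H^{s}}\approx \|\nabla_x\times u\|_{H^{s-1}}$ for divergence-free, zero-mean $u$ on $\mathbb{T}^3$ (supplemented by a Poincar\'e step when $|\gamma|=0$) then reduces the desired bound to control of $\|\partial^{\gamma'}\partial_t E\|$ and $\|\partial^{\gamma'}\mathcal{J}\|$ for $|\gamma'|\le N-2$. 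The first term follows from the $E$-estimate of the previous paragraph, since $|\gamma'+e_0|\le N-1$ still lies in its range. The second follows from Proposition \ref{linear} via the representation \eqref{pf}, which exhibits $\mathcal{J}$ as a purely microscopic linear functional of $f$.

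The principal subtlety is the mean-value bookkeeping: the constant $\bar{B}$ from \eqref{bar} is genuinely nontrivial, so one must work with $B-\bar{B}$ in order to invert $\nabla_x\times$ elliptically on the torus. No analogous treatment is needed for $E$, whose mean is pinned down pointwise by (\ref{ai}) through the means of the $l$- and $h$-coefficients. Beyond this, the derivative counting is tight but closes exactly: the single-derivative loss from (\ref{ai}), and the one incurred in inverting $\nabla_x\times$, is precisely matched by the gap between $N-1$ on the left and $N$ on the right of the proposition.
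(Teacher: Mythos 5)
Your overall route is the same one the paper takes: the paper proves Proposition \ref{field} by following \cite[Lemma 13]{MR2100057} line by line, substituting Propositions \ref{linear} and \ref{high} for the corresponding lemmas there, and that is precisely the argument you reconstruct ($E$ from the difference of the two equations in (\ref{ai}), $B-\bar{B}$ from $\nabla_x\times B=\partial_t E+4\pi\mathcal{J}$ together with $\nabla_x\cdot B=0$, the zero mean coming from \eqref{bar}, the current $\mathcal{J}$ handled through \eqref{pf}, and all remainders absorbed by Propositions \ref{linear} and \ref{high}). The $E$ part of your proof is correct as written.

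There is, however, a gap in the bookkeeping for $B$. The multi-index $\gamma$ here contains the temporal index $\gamma^0$, while the identity $\|u\|_{H^s}\approx\|\nabla_x\times u\|_{H^{s-1}}$ gains only a \emph{spatial} derivative. If $\gamma$ is purely temporal with $\gamma^0=N-1\ge 1$, then applying your scheme to $u=\partial_t^{\gamma^0}(B-\bar{B})$ (Poincar\'e plus one curl) leaves you with $\|\partial_t^{\gamma^0}(\partial_t E+4\pi\mathcal{J})\|$, which contains $\partial_t^{N}E$; this is outside the range $|\gamma|\le N-1$ of your $E$ estimate, so the claimed reduction to $|\gamma'|\le N-2$ fails in that case. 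The standard repair, and the one used in \cite[Lemma 13]{MR2100057}: whenever $\gamma^0\ge 1$, do not invert the curl at all, but use Faraday's law from \eqref{maxwellC}, $\partial_t B=-\nabla_x\times E$ (normalized constants), to rewrite $\partial^\gamma B$ as a derivative of $E$ of total order exactly $|\gamma|\le N-1$, which your $E$ bound covers; reserve the elliptic inversion of $\nabla_x\times$ (with the zero-mean/Poincar\'e step, noting also that $\int_{\mathbb{T}^3}\partial_t^k B\,dx=0$ for $k\ge1$ since $\bar{B}$ is constant in time) for purely spatial $\gamma$, where your derivative count is correct because the one spatial derivative gained offsets the extra $\partial_t$ falling on $E$. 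With this case distinction the proof closes.
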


Similar to the previous propositions, this Proposition \ref{field} is proven
in exactly the same way as \cite[Lemma 13]{MR2100057} except that we replace
the estimates in the proof of \cite[Lemma 13]{MR2100057} with their
corresponding analogues herein. Specifically, we follow directly the proof
of \cite[Lemma 13]{MR2100057} however we replace the use of \cite[Lemma 11
and Lemma 12]{MR2100057} with Propositions \ref{linear} and \ref{high}
respectively.

\bigskip

Collecting the previous estimates in this section, we can now prove the
crucial positivity of $L$ from \eqref{L}, as stated in Theorem \ref{positive}, for a small amplitude solution $[f(t,x,p),E(t,x),B(t,x)]$. Once again the proof is the same
as the analogous proof for the relativistic Landau-Maxwell system from \cite[Theorem 2]{MR2100057}. We need only replace the estimates used in \cite[Theorem 2]{MR2100057} with their analogues in this section; in particular we
replace \cite[Lemma 13]{MR2100057} with our Proposition \ref{field}, \cite[Lemma 12]{MR2100057} with Proposition \ref{high}, and \cite[Lemma 11]{MR2100057} with Proposition \ref{linear}. We then refer to \cite{MR2100057}
since otherwise the details are exactly the same. This completes Theorem \ref{positive}.

\subsection{Global Solutions}

\label{globalsection} In this section we establish Theorem \ref{mainTHM}. We
first explain how to derive a refined energy estimate. We use the instant
energy functional 
\begin{equation*}
\CE_{m,\ell}(t)\approx 
\sum_{|\beta |\leq m}\sum_{|\gamma |\leq N-|\beta |}
\| \partial _{\beta}^{\gamma }f(t)\|_{2,\ell}^{2}
+
\sum_{|\gamma |\leq N}\| \partial ^{\gamma }[E(t),B(t)]\|^{2}.
\end{equation*}
We also define the refined dissipation rate as 
\begin{equation*}
\CD_{m,\ell }(t)
\eqdef 
\sum_{|\beta |\leq m}\sum_{|\gamma |\leq N-|\beta |}
\| \partial _{\beta}^{\gamma}f(t)\|_{\nu,\ell}^{2}.
\end{equation*}
Here $0\leq m\leq N$. In these spaces we have the estimate:

\begin{proposition}
\label{energy} Fix $\ell \geq 0$. Let $[f(t,x,p),E(t,x),B(t,x)]$ be the
unique solution constructed in Theorem \ref{local} which also satisfies the
conservation laws (\ref{ma}), (\ref{mo}) and (\ref{en}). Let the small
amplitude assumption (\ref{m0}) be valid. For any given $0\leq m\leq N$ and $%
|\beta |\leq m,$ there are constants $C_{m,\ell }^{\ast }>0$ and $\delta
_{m,\ell }>0\,$ such that 
\begin{equation*}
\frac{d}{dt}\CE_{m,\ell}(t)+\delta _{m,\ell}\CD_{m,\ell }(t)\leq C_{m,\ell }^{\ast }\sqrt{\CE_{N,\ell}(t)}~
\CD_{N,\ell}(t).
\end{equation*}
\end{proposition}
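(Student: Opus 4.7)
The plan is to apply $\partial_\beta^\gamma$ to the perturbed equation \eqref{rvmlC} for each pair with $|\beta|\le m$ and $|\gamma|\le N-|\beta|$, take the weighted inner product with $w_\ell^{2}\partial_\beta^\gamma f$, and couple the result with the Maxwell energy identity derived from \eqref{maxwellC} after applying $\partial^\gamma$ and pairing with $\partial^\gamma[E,B]$. We proceed by induction on $|\beta|$, beginning with the base case $|\beta|=0$.

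For the base case, the transport term $\frac{p}{p^0}\cdot\nabla_x$ is skew-symmetric and therefore contributes nothing, while the Lorentz force term $\pm(E+\hat p\times B)\cdot\nabla_p$ pairs with the first-order inhomogeneous term $\mp\{E\cdot\hat p\}\sqrt{J}$ and with the Maxwell energy identity to produce the standard cancellation that yields the closed instant energy. For the linear collision term we invoke the positivity estimate in Theorem \ref{positive}, which provides the full dissipation $\delta_0\sum_{|\gamma|\le N}\|\partial^\gamma f\|^{2}$; this is what opens the induction. The nonlinear contributions, namely $(\partial^\gamma\Gamma(f,f),\partial^\gamma f)$ together with the cubic field--particle term $(\partial^\gamma\{E\cdot\hat p\,f\},\partial^\gamma f)$ and the Lorentz commutator $([\partial^\gamma,(E+\hat p\times B)\cdot\nabla_p]f,\partial^\gamma f)$, are all controlled by $\sqrt{\CE_{N,\ell}}\,\CD_{N,\ell}$ by distributing derivatives via Leibniz, using Sobolev embedding on factors with few derivatives, keeping the full $L^2$ norm on the factor carrying the most derivatives, and applying Theorem \ref{thm:nonLINest} for the $\Gamma$ contribution.

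For the inductive step $1\le |\beta|\le m$, we apply $\partial_\beta^\gamma$ with $|\gamma|\le N-|\beta|$ and pair with $w_\ell^{2}\partial_\beta^\gamma f$. The weighted $x$-transport is again skew-symmetric, but the commutator $[\partial_\beta,\frac{p}{p^0}\cdot\nabla_x]$ produces a term involving $\partial_{\beta'}^{\gamma+e_j}f$ with $|\beta'|<|\beta|$, which is already controlled by $\CD_{m-1,\ell}$ from the inductive hypothesis (here $e_j$ is the unit multi-index). For the linear collision piece we write $L=\nu-K$ and invoke Proposition \ref{Aestimate} and Proposition \ref{Kestimate}: these produce the desired positive piece $\|\partial_\beta^\gamma f\|_{\nu,\ell}^{2}$ at the cost of an arbitrarily small multiple of lower-order $\nu$-norms and a compactly supported low-momentum residual $|\mathbf{1}_{\le R}\partial_\beta^\gamma f|_{2}^{2}$. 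The small piece is absorbed into the positive dissipation after summation over $|\beta|\le m$, and the compact residual is majorized by the pure spatial dissipation already controlled from the base case, since on the bounded set $\{|p|\le R\}$ the weight $w_\ell$ is bounded and the momentum derivatives are equivalent to lower-order spatial data. The nonlinear terms are once more absorbed via Theorem \ref{thm:nonLINest} into $\sqrt{\CE_{N,\ell}}\,\CD_{N,\ell}$, using the product form of that estimate and distributing at most $N/2$ derivatives onto one factor to which we apply Sobolev embedding in $x$.

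The subtle point, and the main obstacle, is the Lorentz force commutator $[\partial_\beta^\gamma,(E+\hat p\times B)\cdot\nabla_p]f$ for $|\beta|\ge 1$, since $\nabla_p$ produces a derivative at level $|\beta|+1$ that is no longer bounded by $\CD_{m,\ell}$ when $|\beta|=m$. We treat this in the standard way by integrating by parts in $p$ to shift $\nabla_p$ off $f$; the transferred derivative lands either on the weight $w_\ell^{2}$, giving a decay factor $\langle p\rangle^{-1}$ and thus a term bounded by $\CD_{m,\ell}$ itself, or on the coefficient $\hat p$, which is smooth and bounded. The remaining term, where $\nabla_p$ sits on $\partial_\beta^\gamma f$ with no further derivative, is exactly of the form $(E\cdot\nabla_p\partial_\beta^\gamma f,w_\ell^{2}\partial_\beta^\gamma f)$ and is antisymmetrized by integration by parts up to a $\langle p\rangle^{-1}$ weight loss; after an application of Sobolev embedding on the field factor this contributes at most $\sqrt{\CE_{N,\ell}}\,\CD_{N,\ell}$. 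Assembling the base estimate, the inductive estimates, and the Maxwell energy identity, and choosing the constants $\delta_{m,\ell}$ small enough to absorb all error terms, yields the claimed differential inequality with some $C^{\ast}_{m,\ell}>0$.
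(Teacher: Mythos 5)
Your overall architecture -- induction on the momentum-derivative order, Theorem \ref{positive} for the $|\beta|=0$ dissipation, the splitting $L=\nu-K$ with Propositions \ref{Aestimate} and \ref{Kestimate} for higher $\beta$, Theorem \ref{thm:nonLINest} for the $\Gamma$ terms, and absorption of transport commutators and compact residuals through the inductive hypothesis -- is the same scheme the paper follows (it imports it wholesale from the relativistic Landau--Maxwell case, \cite[Lemma 14]{MR2100057}). However, there is a genuine gap: you never treat the linear field source term $\mp\{E\cdot\hat{p}\}\sqrt{J}$ once momentum derivatives are present, and correspondingly you never invoke Proposition \ref{field}, which the paper explicitly lists as one of the ingredients substituted into the Landau--Maxwell argument. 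For $\beta=0$ this term cancels against the Maxwell energy identity, as you say; but for $|\beta|\geq 1$ it produces $\mp\,\partial^{\gamma}E\cdot\partial_{\beta}\{\hat{p}\sqrt{J}\}$ paired with $w_{\ell}^{2}\partial_{\beta}^{\gamma}f$, a quadratic term of size $\|\partial^{\gamma}E\|\,\|\partial_{\beta}^{\gamma}f\|_{2,\ell}$ with no small prefactor: it is neither of the cubic form $\sqrt{\CE_{N,\ell}}\,\CD_{N,\ell}$ nor absorbable into $\delta_{m,\ell}\CD_{m,\ell}$, since the fields carry no dissipation. The way to close it is to split off $\eta\|\partial_{\beta}^{\gamma}f\|_{\nu,\ell}^{2}$ and then use Proposition \ref{field} (available because $|\gamma|\leq N-|\beta|\leq N-1$) to bound $C_{\eta}\|\partial^{\gamma}E\|^{2}\lesssim \sum_{|\gamma'|\leq N}\|\partial^{\gamma'}f\|^{2}\lesssim \CD_{m-1,\ell}$, which is then absorbed by taking a suitable linear combination with the lower-order inequalities (this is precisely why $\CE_{m,\ell}$ is only specified up to equivalence). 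Without this step the claimed differential inequality does not close.

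Two smaller points. Your integration by parts on the Lorentz-force commutator is misdirected: the terms you identify as dangerous are of the form $\partial^{\gamma_{1}}E\cdot\nabla_{p}\partial_{\beta}^{\gamma_{2}}f$ (and the analogous $B$ terms) with $\gamma_{1}\neq 0$, and integrating by parts there would put $\nabla_{p}$ onto $\partial_{\beta}^{\gamma}f$, creating total order $N+1$, which nothing controls. These terms should instead be estimated directly: they carry a differentiated field factor (yielding $\sqrt{\CE_{N,\ell}}$ after Sobolev embedding), and the $f$-factor, though of momentum order $|\beta|+1>m$, has total order at most $N$ and hence lies in $\CD_{N,\ell}$ -- which is exactly why the right-hand side of the Proposition carries $\CD_{N,\ell}$ rather than $\CD_{m,\ell}$. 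Integration by parts is needed only for the term with all derivatives on $f$, where $\nabla_{p}\cdot(E+\hat{p}\times B)=0$ and only the derivative of $w_{\ell}^{2}$ survives, as you correctly note. Finally, since Theorem \ref{positive} is an unweighted statement, the weighted base case $|\beta|=0$, $\ell>0$ also needs the $\nu$--$K$ splitting with the compact residual controlled by the unweighted dissipation, together with the weighted handling of the field terms; the paper delegates exactly this point to the argument for \cite[Eq.\ (4.6)]{DS-VMB}, and your sketch passes over it.
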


We point out that the proof of Proposition \ref{energy} is exactly the same
as the corresponding proof in \cite[Lemma 14]{MR2100057} for the
relativistic Landau-Maxwell system. The differences are that we use the
relativistic Boltzmann estimates from this paper, instead of the
corresponding estimates from \cite{MR2100057}, and secondly that we include
the weight $\ell \ge 0$. For the estimates, we specifically replace \cite[%
Theorem 4]{MR2100057} with our Theorem \ref{thm:nonLINest}, \cite[Lemma 13%
]{MR2100057} with Proposition \ref{field}, \cite[Lemma 7]{MR2100057} with
Propositions \ref{Aestimate} and \ref{Kestimate} and otherwise the argument
follows exactly the proof of \cite[Lemma 14]{MR2100057}. To include the
weights $\ell \ge 0$, we refer to the argument used to prove \cite[Eq (4.6)]%
{DS-VMB}.

\bigskip

Finally we prove the global existence of solutions to the relativistic
Vlasov-Maxwell-Boltzmann system (\ref{rvmlC}) and (\ref{maxwellC}). Notice
that using the estimates above, in particular Proposition \ref{energy}, this
follows from the standard continuity argument as in for example \cite{MR2000470,MR2100057}. Thus we have proven all of Theorem \ref{mainTHM},
except for the decay rates.  But the decay rates in this case follow directly
using the interpolation procedure from \cite{MR2209761} (which was applied
to the Newtonian Vlasov-Maxwell-Boltzmann system and the relativistic
Landau-Maxwell system in \cite{MR2209761}).
Note precisely that for the rapid decay, we use the proof from \cite[Section 2]{MR2209761} combined with the differential inequality from Proposition \ref{energy} in this paper.
\hfill {\bf Q.E.D.}

\subsection*{Acknowledgments}  We would like to thank the referees for their careful reading of the paper and constructive comments which helped to improve the presentation.

\begin{bibdiv}
\begin{biblist}

\bib{MR1402446}{article}{
    author={Andr{\'e}asson, H{\aa}kan},
     title={Regularity of the gain term and strong $L\sp 1$ convergence to
            equilibrium for the relativistic Boltzmann equation},
   journal={SIAM J. Math. Anal.},
    volume={27},
      date={1996},
    number={5},
     pages={1386\ndash 1405},
      issn={0036-1410},
%    review={MR1402446 (97e:76077)},
}

\bib{MR2098116}{article}{
    author={Calogero, Simone},
     title={The Newtonian limit of the relativistic Boltzmann equation},
   journal={J. Math. Phys.},
    volume={45},
      date={2004},
    number={11},
     pages={4042\ndash 4052},
      issn={0022-2488},
%    review={MR2098116},
}

%\bib{MR2082817}{article}{
%   author={Calogero, Simone},
%   author={Lee, Hayoung},
%   title={The non-relativistic limit of the Nordstr\"om-Vlasov system},
%   journal={Commun. Math. Sci.},
%   volume={2},
%   date={2004},
%   number={1},
%   pages={19--34},
%   issn={1539-6746},
%   review={\MR{2082817 (2005f:83044)}},
%}

%%\bib{MR1313028}{book}{
%%   author={Cercignani, Carlo},
%%   title={The Boltzmann equation and its applications},
%%   series={Applied Mathematical Sciences},
%%   volume={67},
%%   publisher={Springer-Verlag},
%%   place={New York},
%%   date={1988},
%%   pages={xii+455},
%%   isbn={0-387-96637-4},
%%   review={\MR{1313028 (95i:82082)}},
%%}

%\bib{MR1307620}{book}{
%   author={Cercignani, Carlo},
%   author={Illner, Reinhard},
%   author={Pulvirenti, Mario},
%   title={The mathematical theory of dilute gases},
%   series={Applied Mathematical Sciences},
%   volume={106},
%   publisher={Springer-Verlag},
%   place={New York},
%   date={1994},
%   pages={viii+347},
%   isbn={0-387-94294-7},
%   review={\MR{1307620 (96g:82046)}},
%}

\bib{MR1898707}{book}{
    author={Cercignani, Carlo},
    author={Kremer, Gilberto Medeiros},
     title={The relativistic Boltzmann equation: theory and applications},
    series={Progress in Mathematical Physics},
    volume={22},
 publisher={Birkh\"auser Verlag},
     place={Basel},
      date={2002},
     pages={x+384},
      isbn={3-7643-6693-1},
%    review={MR1898707 (2003f:82078)},
}

\bib{MR1325915}{article}{
   author={Constantine, G. M.},
   author={Savits, T. H.},
   title={A multivariate Fa\`a di Bruno formula with applications},
   journal={Trans. Amer. Math. Soc.},
   volume={348},
   date={1996},
   number={2},
   pages={503--520},
   issn={0002-9947},
%   review={\MR{1325915 (96g:05008)}},
}

\bib{MR635279}{book}{
    author={de Groot, S. R.},
    author={van Leeuwen, W. A.},
    author={van Weert, Ch. G.},
     title={Relativistic kinetic theory},
 publisher={North-Holland Publishing Co.},
     place={Amsterdam},
      date={1980},
     pages={xvii+417},
      isbn={0-444-85453-3},
%    review={MR635279 (83a:82024)},
}

%\bib{MR870991}{article}{
%   author={Degond, P.},
%   title={Local existence of solutions of the Vlasov-Maxwell equations and
%   convergence to the Vlasov-Poisson equations for infinite light velocity},
%   journal={Math. Methods Appl. Sci.},
%   volume={8},
%   date={1986},
%   number={4},
%   pages={533--558},
%   issn={0170-4214},
%%   review={\MR{870991 (88a:76054)}},
%}

%\bib{MR0471665}{article}{
%    author={Dijkstra, J. J.},
%    author={van Leeuwen, W. A.},
%     title={Mathematical aspects of relativistic kinetic theory},
%   journal={Phys. A},
%    volume={90},
%      date={1978},
%    number={3--4},
%     pages={450\ndash 486},
%%    review={MR0471665 (57 \#11390)},
%}

%\bib{MR1014927}{article}{
%    author={DiPerna, R. J.},
%    author={Lions, P.-L.},
%     title={On the Cauchy problem for Boltzmann equations: global existence
%            and weak stability},
%   journal={Ann. of Math. (2)},
%    volume={130},
%      date={1989},
%    number={2},
%     pages={321\ndash 366},
%      issn={0003-486X},
%%    review={MR1014927 (90k:82045)},
%}

\bib{DS-VPB}{article}{
   author={Duan, R.-J.},
   author={Strain, R.M.},
   title={Optimal time decay of the Vlasov-Poisson-Boltzmann system in ${\mathbb{R}}^3$},
   journal={Arch. Ration. Mech. Anal.},
   volume={199},
   date={2011},
   number={1},
   pages={291--328},
%   issn={0022-4715},
 %  review={\MR{1151987 (93b:82064)}},
 eprint = {arXiv:0912.1742},
    doi={10.1007/s00205-010-0318-6},
}

\bib{DS-VMB}{article}{
   author={Duan, R.-J.},
   author={Strain, R.M.},
   title={Optimal Large-Time Behavior of the Vlasov-Maxwell-Boltzmann System },
   journal={Commun. Pure Appl. Math },
   volume={in press},
   date={2010},
%   number={3-4},
   pages={1--38},
%   issn={0022-4715},
 %  review={\MR{1151987 (93b:82064)}},
 eprint = {arXiv:1006.3605v1},
}

\bib{DEnotMSI}{article}{
   author={Dudy{\'n}ski, Marek},
   author={Ekiel-Je{\.z}ewska, Maria L.},
   title={The relativistic Boltzmann equation - mathematical and physical aspects},
   journal={J. Tech. Phys.},
   volume={48},
   date={2007},
%   number={3-4},
   pages={39--47},
%   issn={0022-4715},
 %  review={\MR{1151987 (93b:82064)}},
}

%\bib{MR1031410}{article}{
%    author={Dudy{\'n}ski, Marek},
%     title={On the linearized relativistic Boltzmann equation. II. Existence
%            of hydrodynamics},
%   journal={J. Statist. Phys.},
%    volume={57},
%      date={1989},
%    number={1-2},
%     pages={199\ndash 245},
%      issn={0022-4715},
%%    review={MR1031410 (91b:82043)},
%}

\bib{MR933458}{article}{
    author={Dudy{\'n}ski, Marek},
    author={Ekiel-Je{\.z}ewska, Maria L.},
     title={On the linearized relativistic Boltzmann equation. I. Existence
            of solutions},
   journal={Comm. Math. Phys.},
    volume={115},
      date={1988},
    number={4},
     pages={607\ndash 629},
      issn={0010-3616},
%    review={MR933458 (89h:82017)},
}

\bib{MR1151987}{article}{
    author={Dudy{\'n}ski, Marek},
    author={Ekiel-Je{\.z}ewska, Maria L.},
     title={Global existence proof for relativistic Boltzmann equation},
   journal={J. Statist. Phys.},
    volume={66},
      date={1992},
    number={3-4},
     pages={991\ndash 1001},
      issn={0022-4715},
%    review={MR1151987 (93b:82064)},
}

%\bib{MR841735}{article}{
%    author={Dudy{\'n}ski, Marek},
%    author={Ekiel-Je{\.z}ewska, Maria L.},
%     title={Errata: ``Causality of the linearized relativistic Boltzmann
%            equation''},
%   journal={Investigaci\'on Oper.},
%    volume={6},
%      date={1985},
%    number={1},
%     pages={2228},
%      issn={0257-4306},
%%    review={MR841735 (87e:82043b)},
%}
%
%\bib{MR818441}{article}{
%    author={Dudy{\'n}ski, Marek},
%    author={Ekiel-Je{\.z}ewska, Maria L.},
%     title={Causality of the linearized relativistic Boltzmann equation},
%   journal={Phys. Rev. Lett.},
%    volume={55},
%      date={1985},
%    number={26},
%     pages={2831\ndash 2834},
%      issn={0031-9007},
%%    review={MR818441 (87e:82043a)},
%}

%\bib{MR1958975}{book}{
%    author={Escobedo, Miguel},
%    author={Mischler, St{\'e}phane},
%    author={Valle, Manuel A.},
%     title={Homogeneous Boltzmann equation in quantum relativistic kinetic
%            theory},
%    series={Electronic Journal of Differential Equations. Monograph},
%    volume={4},
% publisher={Southwest Texas State University},
%     place={San Marcos, TX},
%      date={2003},
%     pages={85},
%%    review={MR1958975 (2004f:82067)},
%}

\bib{MR1379589}{book}{
   author={Glassey, Robert T.},
   title={The Cauchy problem in kinetic theory},
   publisher={Society for Industrial and Applied Mathematics (SIAM)},
   place={Philadelphia, PA},
   date={1996},
   pages={xii+241},
   isbn={0-89871-367-6},
   review={\MR{1379589 (97i:82070)}},
}

\bib{MR2217287}{article}{
   author={Glassey, Robert T.},
   title={Global solutions to the Cauchy problem for the relativistic
   Boltzmann equation with near-vacuum data},
   journal={Comm. Math. Phys.},
   volume={264},
   date={2006},
   number={3},
   pages={705--724},
 %  issn={0010-3616},
%   review={\MR{2217287 (2007a:82062)}},
}

\bib{MR1105532}{article}{
    author={Glassey, Robert T.},
    author={Strauss, Walter A.},
     title={On the derivatives of the collision map of relativistic
            particles},
   journal={Transport Theory Statist. Phys.},
    volume={20},
      date={1991},
    number={1},
     pages={55\ndash 68},
      issn={0041-1450},
%    review={MR1105532 (92f:81222)},
}

\bib{MR1211782}{article}{
    author={Glassey, Robert T.},
    author={Strauss, Walter A.},
     title={Asymptotic stability of the relativistic Maxwellian},
   journal={Publ. Res. Inst. Math. Sci.},
    volume={29},
      date={1993},
    number={2},
     pages={301\ndash 347},
      issn={0034-5318},
%    review={MR1211782 (94c:82063)},
}

\bib{MR1321370}{article}{
    author={Glassey, Robert T.},
    author={Strauss, Walter A.},
     title={Asymptotic stability of the relativistic Maxwellian via fourteen
            moments},
   journal={Transport Theory Statist. Phys.},
    volume={24},
      date={1995},
    number={4-5},
     pages={657\ndash 678},
      issn={0041-1450},
%    review={MR1321370 (96c:82054)},
}

%\bib{MR0156656}{article}{
%    author={Grad, Harold},
%     title={Asymptotic theory of the Boltzmann equation. II},
% booktitle={Rarefied Gas Dynamics (Proc. 3rd Internat. Sympos., Palais de
%            l'UNESCO, Paris, 1962), Vol. I},
%     pages={26\ndash 59},
% publisher={Academic Press},
%     place={New York},
%      date={1963},
%%    review={MR0156656 (27 \#6577)},
%}

%%\bib{MR0135535}{article}{
%%   author={Grad, Harold},
%%   title={Principles of the kinetic theory of gases},
%%   conference={
%%      title={Handbuch der Physik (herausgegeben von S. Fl\"ugge), Bd. 12,
%%      Thermodynamik der Gase},
%%   },
%%   book={
%%      publisher={Springer-Verlag},
%%      place={Berlin},
%%   },
%%   date={1958},
%%   pages={205--294},
%%   review={\MR{0135535 (24 \#B1583)}},
%%}
%%		
%%\bib{MR0102849}{book}{
%%   author={Kac, Mark},
%%   title={Probability and related topics in physical sciences},
%%   series={With special lectures by G. E. Uhlenbeck, A. R. Hibbs, and B. van
%%   der Pol. Lectures in Applied Mathematics. Proceedings of the Summer
%%   Seminar, Boulder, Colo.},
%%   volume={1957},
%%   publisher={Interscience Publishers, London-New York},
%%   date={1959},
%%   pages={xiii+266},
%%   review={\MR{0102849 (21 \#1635)}},
%%}

%%\bib{MR0523550}{article}{
%%   author={Klaus, M.},
%%   title={Boltzmann collision operator without cut-off},
%%   journal={Helv. Phys. Acta},
%%   volume={50},
%%   date={1977},
%%   number={6},
%%   pages={893--903},
%%   issn={0018-0238},
%%   review={\MR{0523550 (58 \#25683)}},
%%}

\bib{MR1908664}{article}{
    author={Guo, Yan},
     title={The Vlasov-Poisson-Boltzmann system near Maxwellians},
   journal={Comm. Pure Appl. Math.},
    volume={55},
      date={2002},
    number={9},
     pages={1104\ndash 1135},
      issn={0010-3640},
%    review={MR1908664 (2003b:82050)},
}  

\bib{MR2000470}{article}{
   author={Guo, Yan},
   title={The Vlasov-Maxwell-Boltzmann system near Maxwellians},
   journal={Invent. Math.},
   volume={153},
   date={2003},
   number={3},
   pages={593--630},
  % issn={0020-9910},
%   review={\MR{2000470 (2004m:82123)}},
}

\bib{guoWS}{article}{
    AUTHOR = {Guo, Yan},
     TITLE = {The Boltzmann equation in the whole space},
   JOURNAL = {Indiana Univ. Math. J.},
  FJOURNAL = {Indiana University Mathematics Journal},
    VOLUME = {53},
      YEAR = {2004},
     ISSUE = {4},
     PAGES = {1081\ndash 1094},
      ISSN = {0022-2518},
     CODEN = {IUMJAB},
   MRCLASS = {35Q72;},
}

\bib{MR2100057}{article}{
   author={Strain, Robert M.},
   author={Guo, Yan},
   title={Stability of the relativistic Maxwellian in a collisional plasma},
   journal={Comm. Math. Phys.},
   volume={251},
   date={2004},
   number={2},
   pages={263--320},
%   issn={0010-3616},
 %  review={\MR{2100057 (2005m:82155)}},
}

\bib{MR2209761}{article}{
   author={Strain, Robert M.},
   author={Guo, Yan},
   title={Almost exponential decay near Maxwellian},
   journal={Comm. Partial Differential Equations},
   volume={31},
   date={2006},
   number={1-3},
   pages={417--429},
   issn={0360-5302},
%   review={\MR{2209761 (2006m:82042)}},
 %  doi={10.1080/03605300500361545},
}

\bib{MR2366140}{article}{
   author={Strain, Robert M.},
   author={Guo, Yan},
   title={Exponential decay for soft potentials near Maxwellian},
   journal={Arch. Ration. Mech. Anal.},
   volume={187},
   date={2008},
   number={2},
   pages={287--339},
   issn={0003-9527},
%   review={\MR{2366140 (2008m:82008)}},
 %  doi={10.1007/s00205-007-0067-3},
}

\bib{strainPHD}{book}{
    author={Strain, Robert M.},
     title={An Energy Method in Collisional Kinetic Theory,},
 publisher={Ph.D. dissertation, Division of Applied Mathematics, Brown University},
      date={May 2005},
}

\bib{MR2259206}{article}{
   author={Strain, Robert M.},
   title={The Vlasov-Maxwell-Boltzmann system in the whole space},
   journal={Comm. Math. Phys.},
   volume={268},
   date={2006},
   number={2},
   pages={543--567},
   issn={0010-3616},
%   review={\MR{2259206 (2008b:82085)}},
   doi={10.1007/s00220-006-0109-y},
}

\bib{strainNEWT}{article}{
    author={Strain, Robert M.},
     title={Global Newtonian Limit For The  Relativistic
Boltzmann Equation Near Vacuum},
   journal={Siam J. Math. Anal.},
   volume={42},
   date={2010},
   number={4},
   pages={1568Ð1601},
   doi={110.1137/090762695},
}

\bib{strainSOFT}{article}{
    author={Strain, Robert M.},
     title={Asymptotic Stability of the Relativistic {B}oltzmann Equation for the Soft-Potentials},
   journal={Comm. Math. Phys.},
   volume={300},
   date={2010},
   number={2},
   pages={529--597},
   eprint={arXiv:1003.4893v1}
      doi={10.1007/s00220-010-1129-1},
}

\bib{strainCOOR}{article}{
   author={Strain, Robert M.},
   title = {{C}oordinates in the relativistic {B}oltzmann theory},
      journal={Kinetic and Related Models, special issue},
   volume={4},
   date={2011},
   number={1},
   pages={345--359},
   eprint={arXiv:1011.5093v1}
      doi={10.3934/krm.2011.4.345},
}

\bib{szSOFTwhole}{article}{
   author={Strain, Robert M.},
      author={Zhu, Keya},
   title = {Large-Time decay of the soft potential relativistic 
Boltzmann equation in $\mathbb{R}^3_x$},
      journal={preprint},
   date={2011},
%   pages={345--359},
%   eprint={arXiv:1011.5093v1}
 %     doi={10.3934/krm.2011.4.345},
}

\end{biblist}
\end{bibdiv}

\end{document}